\documentclass[11pt,onecolumn]{article}
\usepackage[bottom=1.0in,left=1.1in,right=1.1in]{geometry}

\usepackage{caption} 
\usepackage{subcaption}
\captionsetup[subfigure]{labelformat=empty}
\usepackage{abstract}

\usepackage{amsmath,amsthm,epsfig,amssymb,amsbsy}
\usepackage{amsfonts}
\usepackage{tikz}
\usepackage{pgfplots}
\pgfplotsset{compat=1.16}
\usepackage{enumerate}
\usepackage{comment}
\usepackage{stmaryrd}
\usepackage[normalem]{ulem} 

\usepackage{incl_commands}
\usepackage{amsthm}
\usepackage{amsmath}

\usepackage{graphicx}
\usepackage{grffile}

\usepackage[outdir=./]{epstopdf}

\newenvironment{keywords}{\begin{paragraph}{Keywords:}
}
{
\end{paragraph}
}
    
\newenvironment{AMS}{\begin{paragraph}{AMS Subject Classification:}
}
{\end{paragraph}
}



\newtheorem{theorem}{Theorem}[section]
\newtheorem{corollary}[theorem]{Corollary}
\newtheorem{lemma}[theorem]{Lemma}
\newtheorem{proposition}[theorem]{Proposition}
\newtheorem{definition}[theorem]{Definition}

\theoremstyle{remark}
\newtheorem{remark}[theorem]{Remark}
\newtheorem{example}[theorem]{Example}

\usepackage{hyperref}
\hypersetup{
    colorlinks=true,
    linkcolor=blue,
    filecolor=magenta,
    citecolor =magenta,  
    urlcolor=magenta,
    pdftitle={Lifting the Convex Conjugate in Lagrangian Relaxations: A Tractable Approach for Continuous Markov Random Fields}
    }
\usepackage[capitalize,nameinlink]{cleveref}[0.19]

\crefname{section}{section}{sections}
\crefname{subsection}{subsection}{subsections}
\Crefname{section}{Section}{Sections}
\Crefname{subsection}{Subsection}{Subsections}

\Crefname{figure}{Figure}{Figures}

\crefformat{equation}{\textup{#2(#1)#3}}
\crefrangeformat{equation}{\textup{#3(#1)#4--#5(#2)#6}}
\crefmultiformat{equation}{\textup{#2(#1)#3}}{ and \textup{#2(#1)#3}}
{, \textup{#2(#1)#3}}{, and \textup{#2(#1)#3}}
\crefrangemultiformat{equation}{\textup{#3(#1)#4--#5(#2)#6}}%
{ and \textup{#3(#1)#4--#5(#2)#6}}{, \textup{#3(#1)#4--#5(#2)#6}}{, and \textup{#3(#1)#4--#5(#2)#6}}

\Crefformat{equation}{#2Equation~\textup{(#1)}#3}
\Crefrangeformat{equation}{Equations~\textup{#3(#1)#4--#5(#2)#6}}
\Crefmultiformat{equation}{Equations~\textup{#2(#1)#3}}{ and \textup{#2(#1)#3}}
{, \textup{#2(#1)#3}}{, and \textup{#2(#1)#3}}
\Crefrangemultiformat{equation}{Equations~\textup{#3(#1)#4--#5(#2)#6}}%
{ and \textup{#3(#1)#4--#5(#2)#6}}{, \textup{#3(#1)#4--#5(#2)#6}}{, and \textup{#3(#1)#4--#5(#2)#6}}

\title{Lifting the Convex Conjugate in Lagrangian Relaxations: \\A Tractable Approach for Continuous Markov Random Fields}

\begin{document}
	
\author{Hartmut Bauermeister\thanks{equal contribution} \footnotemark[4] \and Emanuel Laude\footnotemark[1] \thanks{KU Leuven, Department of Electrical Engineering (ESAT-STADIUS), Leuven, Belgium (\texttt{emanuel.laude@east.kuleuven.be}). This work was conducted while at Technical University of
  Munich} \and Thomas M\"ollenhoff\thanks{RIKEN Center for Advanced Intelligence Project, Tokyo, Japan (\texttt{thomas.moellenhoff@riken.jp})} \and \newline Michael Moeller\thanks{University of Siegen, Department of Electrical Engineering and Computer Science, Siegen, Germany (\texttt{\{hartmut.bauermeister,michael.moeller\}@uni-siegen.de})} \and Daniel Cremers\thanks{Technical University of
  Munich, Department of Informatics, Munich, Germany (\texttt{cremers@tum.de})}}	

\date{May 12, 2022}

\maketitle
\thispagestyle{empty}

\begin{abstract}
Dual decomposition approaches in nonconvex optimization
may suffer from a duality gap. This poses a challenge when applying them directly to nonconvex problems such as MAP-inference in a Markov random field (MRF) with continuous state spaces.
To eliminate such gaps, this paper considers a reformulation of the original nonconvex task in the space of measures.
This infinite-dimensional reformulation is then approximated by a semi-infinite one, which is obtained via a piecewise polynomial discretization in the dual.
We provide a geometric intuition behind the primal problem induced by the dual discretization and draw connections to optimization over moment spaces. In contrast to existing discretizations which suffer from a grid bias, we show that a piecewise polynomial discretization better preserves the continuous nature of our problem.
Invoking results from optimal transport theory and convex algebraic geometry we reduce the semi-infinite program to a finite one and provide a practical implementation based on semidefinite programming. We show, experimentally and in theory, that the approach successfully reduces the duality gap. To showcase the scalability of our approach, we apply it to the stereo matching problem between two images.
\end{abstract}

\begin{keywords}
Markov random fields $\cdot$ moment relaxation $\cdot$ sum of squares $\cdot$ polynomial optimization $\cdot$ generalized conjugacy $\cdot$ optimal transport
\end{keywords}

\begin{AMS}
49N15 $\cdot$ 49M29 $\cdot$ 65K10 $\cdot$ 90C23 $\cdot$ 90C26 $\cdot$ 90C35
\end{AMS}

\tableofcontents

\section{Introduction and Motivation} \label{sec:intro}
\subsection{Lagrangian Relaxations}
In this paper, our goal is to develop a convex optimization framework \`a la dual decomposition for the MAP-inference problem in a continuous \emph{Markov random field} (MRF). Continuous MRFs are versatile and therefore widely used as a model in image processing, computer vision and machine learning \cite{trinh2008particle,yamaguchi2012continuous,Zach-Kohli-eccv12,chen2016full,domokos2018mrf,weinmann2014total,bergmann2015restoration,weinmann2016mumford,crandall2012sfm,bernard2017combinatorial,peng2011convex,salzmann2013continuous,wang2014efficient}. Due to the continuous nature of the state space, inference amounts to a continuous nonlinear optimization problem. For an undirected graph $\cG=(\cV, \cE)$ it takes the following form:
\begin{align} 
\min_{x \in \Omega^\cV}\; \left\{F(x):=\sum_{u \in \cV} f_u(x_u) + \sum_{uv \in \cE} f_{uv}(x_u, x_v) \right\}.
  \tag{P}
    \label{eq:mrf}
\end{align}
The objective function $F$ is an additive composition of a separable part, $\sum_{u \in \cV} f_u(x_u)$, with unary functions ${f_u : (\Omega \subset \bR^m) \to \bR}$ and a coupling part $\sum_{uv \in \cE} f_{uv}(x_u, x_v)$ with pairwise functions ${f_{uv} : \Omega \times \Omega \to \bR}$ both of which are lower semicontinuous (lsc) and $\emptyset \neq \Omega \subset \bR^m$ compact.

The coupling term $\sum_{uv \in \cE} f_{uv}(x_u, x_v)$ introduces a major challenge for efficient optimization in particular when $\cV$ is large and therefore the problem is high-dimensional. For tractability, one therefore seeks to find a decomposable reformulation of the problem.
In continuous optimization a viable approach is to derive a \emph{Lagrangian relaxation} of the problem: The key idea is to introduce auxiliary variables $x_{uv} \in \bR^m \times \bR^m$ for each edge $uv \in \cE$ and linear constraints $x_{uv} = (x_u, x_v)$. Dualizing the linear constraints with Lagrange multipliers $\lambda_{uv} \in \bR^m \times \bR^m$ one arrives at the Lagrangian dual problem which falls within the regime of convex optimization algorithms that can exploit its separable dual structure.
Examples include subgradient ascent or the \emph{primal-dual hybrid gradient} (PDHG) method \cite{chambolle2011first}.

However, the approach suffers from potentially large duality gaps since problem~\cref{eq:mrf} is nonconvex in general.
Indeed, it can be shown that a direct Lagrangian relaxation of \cref{eq:mrf} leads to the following ``naive'' convexification of the original problem:
\begin{align} \label{eq:classical_lagr_relaxation}
\min_{x \in (\bR^m)^\cV} \sum_{u \in \cV} f_u^{**}(x_u) + \sum_{uv \in \cE} f_{uv}^{**}(x_u, x_v).
\end{align}
With some abuse of notation, here, $f_u$ and $f_{uv}$ attain the value $+\infty$ whenever $x_u \not\in \Omega$ or $(x_u, x_v) \not\in \Omega^2$. Then $f_u^{**}$ are the convex biconjugates which correspond to the largest convex lower semicontinuous under-approximations to $f_u$. Such component-wise convex envelopes can produce inaccurate or even trivial convex under-approximations to the global objective $F$.

\subsection{Contributions and Overview}
To remedy duality gaps, in \cref{sec:local_marginal-polytope}, we consider a reformulation of the problem in terms of a linear one over probability measures and perform the Lagrangian relaxation afterwards.
Instead of a naive formulation $\min_{\mu \in \cP(\Omega^\cV)} \int_{\Omega^\cV} F(x) \dd \mu(x)$ over $\cP(\Omega^\cV)$, which is intractable for large $\cV$ we consider a formulation over $\cP(\Omega)^\cV$ which exploits the partially separable structure of our problem. This is called the \emph{local marginal polytope relaxation} (the former being called \emph{marginal polytope relaxation}).
For univariate $\Omega$ and submodular pairwise terms $f_{uv}$ we prove tightness of the relaxation regardless of nonconvexity of $f_u$.

Since the Lagrange multipliers will be continuous functions, in \cref{sec:dual_discretization}, we consider hierarchies of dual programs \cref{eq:hierarchy} obtained by subspace approximations for which we show in \cref{sec:cone_program} that the duality gap vanishes in the limit for a piecewise polynomial discretization.

In \cref{sec:lifting} we derive and study the primal optimization problem corresponding to the dual discretization. In particular we draw connections to optimization over moment spaces and aspects from variational analysis.
We show that under a certain extremality condition, satisfied by a polynomial discretization with degree at least $2$, the generalized biconjugate of a potentially discontinuous, nonconvex function equals (up to closure) the original function. As a consequence, in contrast to a piecewise linear approximation, our piecewise polynomial approximation conserves the original nonconvex cost including concavities when restricted to moment vectors of Diracs.

In \cref{sec:cone_program}, based on the above developments, we derive a piecewise polynomial discretization of the infinite-dimensional local marginal polytope relaxation and show that for univariate $\Omega$ and submodular pairwise terms $f_{uv}$ the duality gap vanishes in the limit at rate $\mathcal{O}(1 / (K \cdot \sqrt{\deg}))$ where $K$ is the number of pieces and $\deg$ the degree of the pieces.
After discretization the problem amounts to a semi-infinite program which can be transformed into a separable finite-dimensional semidefinite program applying concepts from algebraic geometry, such as nonnegativity certificates of polynomials. This allows us to derive an efficient first-order primal-dual algorithm which, due to the separable problem structure, can be parallelized on a GPU to handle large problems.

In \cref{sec:num} we provide numerical evidence for the strict reduction of the duality-gap and demonstrate the advantages of the nonlinear approximation over existing liftings in the literature. We implement our algorithm on a GPU and apply it to the nonconvex problem of large-scale stereo matching. The results show that increasing the dual subspace improves both, the dual energy, and the quality of a recovered primal solution from a discretized measure.

\subsection{Related Work} \label{sec:related_work}
\paragraph{MRFs}
Fix and Agarwal~\cite{fix2014duality} are the first to propose a dual subspace approximation of the infinite-dimensional local marginal polytope relaxation. This allows for a generalization and unified treatment of convex piecewise linear formulations for MRFs with continuous state spaces proposed in
\cite{Zach-Kohli-eccv12,zach2013dual}. However, for the polynomial case, due to the semi-infinite problem structure, the existence of an implementable algorithm beyond subgradient ascent is posed as an open question by \cite{fix2014duality}. As a consequence, no numerical results are presented. 
Dual subspace approximations for related models in a spatially continuous setting have been considered by \cite{moellenhoff-iccv-2017} 
and further employed for the global optimization of vector- or manifold-valued optimization problems in imaging and vision
\cite{mollenhoff2019lifting,vogt2020connection,vogt2020lifting}. In these papers, piecewise linear dual approximations are used and
the techniques developed in the present work allow one to go beyond the piecewise linear case in a tractable way.
Besides applications in imaging and vision, MRFs with continuous state spaces have for example been used in protein folding~\cite{peng2011convex}.

\paragraph{Lifting}
Lifting to measures for global constrained polynomial optimization is used in \cite{lasserre2001global,lasserre2002semidefinite} to reformulate the infinite-dimensional \emph{linear program} (LP) in terms of a linear objective over the \emph{semidefinite programming} (SDP) characterization of the finite-dimensional space of moments. Its SDP-dual is connected to nonnegativity certificates of polynomials based on sum-of-squares (SOS) and the Positivstellensatz \cite{krivine1964anneaux,stengle1974nullstellensatz,schmudgen1991thek}.
However, for high-dimensional problems the relaxations used in \cite{lasserre2001global,lasserre2002semidefinite} turn out intractable.
As a remedy, \cite{waki2006sums,weisser2018sparse} consider sparse SOS-approaches and in particular sparse versions of the Positivstellensatz \cite{weisser2018sparse}. This is closely related to the local marginal polytope relaxation considered in this work.
As a key difference to \cite{lasserre2001global,lasserre2002semidefinite,waki2006sums,weisser2018sparse} we consider possibly nonpolynomial objective functions which results in a generally nonlinear formulation over the space of moments. 
In contrast to \cite{weisser2018sparse}, we apply optimal transport duality theory to further reduce the formulation using a sum-of-squares characterization of Lipschitz continuity of piecewise polynomials.
Similar techniques have been considered recently in \cite{latorre2020lipschitz,chen2020semialgebraic} to estimate Lipschitz constants in neural networks.

\paragraph{Lagrangian relaxation and decoupling by lower relaxations}
A component-wise lifting in problems with a partially separable structure results in a generalized dual decomposition approach. Dual decomposition and Lagrangian relaxation are general principles in optimization and appear as a useful tool across many disciplines, see \cite[Ch.~5--6]{Ber99}, \cite{lemarechal2001lagrangian} and the references therein for an overview. Traditionally, decomposition methods are applied directly in the nonconvex setting without lifting, unlike the generalized scheme we present in this paper which is based on a lifted reformulation. Without lifting this typically results in a component-wise convex lower envelope. More recently, \cite{simoes2021lasry} consider a homotopy method based on component-wise Lasry--Lions envelopes which specializes to a component-wise convex envelope in a certain limit case.

\paragraph{Generalized conjugacy and duality}
Our notion of a lifted convex conjugate is closely related to generalized conjugate functions originally due to \cite{moreau1970inf}. It was utilized by \cite{balder1977extension} to study nonconvex dualities,
expanding upon the work of \cite{rockafellar1974augmented} on augmented Lagrangians for nonconvex optimization. The specific case of quadratic conjugate functions was developed by \cite{poliquin1990subgradient} as an analytic tool in a seminal proof that the proximal subgradient map of a lower semicontinuous extended real-valued function is monotone if and only if the function is convex. 
Typically, generalized conjugate functions appear in the context of eliminating duality gaps \cite{bui2021zero,rockafellar1974augmented} in nonconvex and nonsmooth optimization. In that sense our work shares the goal with the aforementioned works. We offer a somewhat complementary approach through a primal-type lifting view which allows us to establish a connection to optimization in spaces of measures.

\subsection{Notation}
For a compact nonempty set $X \subset \bR^m$, denote by $\ccP(X)$ the space of Borel probability measures on $X$ and by $\ccM(X)$ the space of Radon measures on $X$. The convex cone of nonnegative Radon measures is denoted by $\ccM_+(X)$. For a Radon measure $\mu \in \ccM(X_1)$ and a measurable mapping $T:X_1 \to X_2$, $T \sharp \mu$ is the pushforward of $\mu$ w.r.t. $T$ defined by: $(T \sharp \mu)(A) = \mu(T^{-1}(A))$ for all $A\subset X_2$ in the corresponding $\sigma$-algebra.
Furthermore let $\cC(X)$ be the space of continuous functions on $X$. We will write $\langle \mu, f \rangle=\int_{X} f(x) \dd \mu(x)$, for $f \in \cC(X)$ and $\mu \in \ccM(X)$. For a set $C\subset \bR^m$ we denote by $\ind_C:\bR^m \to \exR = \bR \cup \{\pm \infty\}$ the indicator function with $\ind_C(x) =0$ if $x \in C$ and $\ind_C(x) =\infty$ if $x\notin C$ and by $\sigma_C(x) = \sup_{y\in C} \langle x,y \rangle$ the support function of $C$ at $x \in \bR^m$. We denote by $C^*=\{y \in \bR^m : \langle y, x \rangle \geq 0, \forall x \in C \}$ the dual cone of $C$. These notions are defined analogously for the topologically paired spaces $\ccM(X)$ and $\cC(X)$.
The convex hull $\con C$ of a set $C\subset \bR^m$ is the smallest convex set that contains $C$. Equivalently, $\con C$ is the set of all finite convex combinations of points in $C$. With some abuse of notation we write $\con f$ for a function $f$ to denote the largest convex function below $f$. Let $\delta_x$ denote the Dirac measure centered at $x \in \bR^m$.
We write lsc for lower semicontinuous and denote the epigraphical closure of some function $f$ by $\cl f$. Let $\llbracket \cdot \rrbracket$ denote the Iverson bracket, where $ \llbracket P \rrbracket=1$
if $P$ is true and $ \llbracket P \rrbracket =0$, otherwise. For some extended real-valued function $f:\bR^m \to \exR$ let $f^*(y)=\sup_{x \in \bR^m} \langle x, y\rangle -f(x)$ denote the Fenchel conjugate of $f$ at $y$ and $f^{**} = (f^*)^*$ the Fenchel biconjugate. 

\section{The Local Marginal Polytope Relaxation} \label{sec:local_marginal-polytope}
To overcome the duality gap in \cref{eq:classical_lagr_relaxation} for nonconvex $f_u$, we propose to reformulate the original nonlinear problem in terms of an infinite-dimensional linear program over the space of Radon measures and apply the Lagrangian relaxation afterwards.

The following key lemma reveals, that every minimization problem can be equivalently formulated in terms of an infinite-dimensional linear program. 
\begin{lemma} \label{lem:lp_exactness}
Let $f:X \to \bR$ be lsc with $\emptyset \neq X \subset \bR^m$ compact. Then we have 
\begin{align}
\min_{x \in X} f(x) = \min_{\mu \in \cP(X)} ~\langle \mu, f \rangle,
\end{align}
and $x^* \in \argmin_{x \in X} f(x)$ is a solution to~$\min_{x \in X} f(x)$ if and only if $\delta_{x^*}$ is a minimizer of $\min_{\mu \in \cP(X)} \langle \mu, f \rangle$.
\end{lemma}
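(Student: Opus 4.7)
The plan is to split the equality into two inequalities, and then derive the characterization of minimizers as an immediate corollary.

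First, I will check that both minima actually exist. On the left, since $f$ is lsc and $X$ is compact, Weierstrass guarantees attainment. On the right, I would note that $\cP(X)$ is weak-$*$ compact (as $X$ is compact, this follows from Banach--Alaoglu applied to the unit ball in $C(X)^*$, of which $\cP(X)$ is a weak-$*$ closed subset cut out by $\mu \geq 0$ and $\langle \mu, 1\rangle = 1$), and the functional $\mu \mapsto \langle \mu, f \rangle$ is weak-$*$ lsc whenever $f$ is lsc (approximate $f$ from below by an increasing sequence of continuous bounded functions via Vitali--Carath\'eodory, and use monotone convergence together with the definition of weak-$*$ convergence against continuous test functions). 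Hence a minimizing measure exists as well.

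Next I would prove the two inequalities. For ``$\geq$'', given any $x^* \in \argmin_{x \in X} f(x)$, the Dirac measure $\delta_{x^*}$ lies in $\cP(X)$ and satisfies $\langle \delta_{x^*}, f\rangle = f(x^*) = \min_{x \in X} f(x)$, yielding $\min_{x \in X} f(x) \geq \min_{\mu \in \cP(X)} \langle \mu, f\rangle$. For ``$\leq$'', let $c := \min_{x \in X} f(x)$; then $f(x) \geq c$ pointwise on $X$, so for every $\mu \in \cP(X)$
\begin{equation*}
\langle \mu, f \rangle = \int_X f(x) \dd\mu(x) \;\geq\; c \int_X 1 \dd\mu(x) \;=\; c,
\end{equation*}
since $\mu$ is a probability measure. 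Taking the infimum over $\mu$ gives the reverse inequality, and combining the two yields equality.

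Finally, the characterization of optimal Dirac measures is immediate from the above computation: for any $x^* \in X$, one has $\langle \delta_{x^*}, f\rangle = f(x^*)$, and in view of the equality of the two optimal values, $f(x^*)$ attains $\min_{x \in X} f(x)$ if and only if $\langle \delta_{x^*}, f\rangle$ attains $\min_{\mu \in \cP(X)} \langle \mu, f\rangle$.

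The only step that is not purely algebraic is establishing existence on the measure side, which is where compactness of $X$ and lower semicontinuity of $f$ are genuinely used; however, once existence is granted, the two inequalities are essentially one-line arguments, so I do not anticipate a serious obstacle beyond carefully invoking the right weak-$*$ compactness and lower-semicontinuity facts.
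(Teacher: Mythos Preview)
Your proposal is correct and follows essentially the same approach as the paper: bound $\langle \mu, f\rangle$ below by $\min_X f$ using nonnegativity and unit mass of $\mu$, and achieve equality at $\delta_{x^*}$. Your separate weak-$*$ compactness/lsc argument for existence of a minimizing measure is superfluous, since the two inequalities already exhibit $\delta_{x^*}$ as an explicit minimizer---the paper proceeds exactly this way and never invokes weak-$*$ compactness.
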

\begin{proof}
The result follows immediately via compactness of $X$ and the properties of probability measures: By compactness of $X$ and since $f$ is lsc relative to $X$ we know $x^*$ exists.
Let $f_{\min} = f(x^*)$. By the properties of the Lebesgue integral we have: 
$$
\int_X f(x) \dd \mu(x) \geq \int_X f_{\min} \dd \mu(x) = f_{\min}\int_X 1 \dd \mu(x) =  \min_{x \in X} f(x),
$$
for all $\mu \in \cP(X)$ and $\min_{x \in \bR^m} f(x) = \langle \delta_{x^*}, f \rangle = f(x^*)$.
\end{proof}

Let $\pi_u : \Omega^\cV \to \Omega$, $\pi_{uv} : \Omega^\cE \to \Omega^2$ denote the canonical projections onto the $u\textsuperscript{th}$ resp. $u\textsuperscript{th}$ and $v\textsuperscript{th}$ components.
Then, in our case, applying the reformulation from \cref{lem:lp_exactness} directly to the cost function $F$ with $X=\Omega^\cV$ we obtain by linearity:
\begin{align*}
\min_{\mu \in\cP(\Omega^\cV)} \;\langle \mu, F \rangle &= \min_{\mu \in\cP(\Omega^\cV)} \;\left\langle \mu, \sum_{u \in \cV} f_u \circ \pi_u + \sum_{uv \in \cE}  f_{uv} \circ \pi_{uv} \right\rangle \\
&= \min_{\mu \in\cP(\Omega^\cV)} \;\sum_{u \in \cV} \langle\pi_u \sharp \mu, f_u \rangle + \sum_{uv \in \cE} \langle \pi_{uv} \sharp \mu, f_{uv} \rangle.
\end{align*}
This relaxation is known as the \emph{full marginal polytope relaxation} which is, however, intractable if $\cV$ is large as one minimizes over probability measures on the product space $\cP(\Omega^\cV)$.
Instead, we consider the following linear programming relaxation of \cref{eq:mrf} which is also referred to as the \emph{local marginal polytope relaxation} \cite{peng2011convex,fix2014duality,wald2014tightness,ruozzi2015exactness} which is more tractable as the optimization variable lies in the product space of probability measures $\cP(\Omega)^\cV$:
\begin{align}
\inf_{\mu \in \ccP(\Omega)^\cV} \left\{\mathcal{F}(\mu) := \sum_{u \in \cV} \langle \mu_u, f_u \rangle + \sum_{uv \in \cE} \ot_{f_{uv}}(\mu_u, \mu_v) \right\}.
\tag{R-P}
\label{eq:opt_relax}
\end{align}
Here, $\ot_{f_{uv}}$ denotes the optimal transportation \cite{kantorovich1960} with marginals $(\mu_u, \mu_v)$ and cost $f_{uv}$ defined by
\begin{align}	
\label{eq:kant_optimal_transport}
\ot_{f_{uv}}(\mu_u, \mu_v) = \inf_{\mu_{uv} \in \Pi(\mu_u, \mu_v)} \langle \mu_{uv}, f_{uv}\rangle.
\end{align}
The constraint set $\Pi(\mu_u, \mu_v)$ consists of all Borel probability measures on $\Omega^2$ with specified marginals $\mu_u$ and $\mu_v$: 
\begin{align} \label{eq:marginalization_constraints}
\Pi(\mu_{u}, \mu_v)=\left\{ \mu_{uv} \in \cP(\Omega^2) : \pi_u\sharp \mu_{uv} = \mu_u, \; \pi_v\sharp \mu_{uv} = \mu_v \right\},
\end{align}
where $\pi_u : \Omega \times \Omega \to \Omega$ corresponds to the canonical projection onto the $u\textsuperscript{th}$ component.
Note that for finite state-spaces (i.e., $|\Omega| < \infty$), the set $\cP(\Omega)$ can be identified with the standard $(|\Omega| - 1)$-dimensional probability simplex and $\Pi(\mu_u, \mu_v)$ with the set of nonnegative $|\Omega| \times |\Omega|$ matrices whose rows and columns sum up to $\mu_v$ and $\mu_u$.
In that case, the linear program given in \cref{eq:opt_relax} is equivalent to the well-known finite-dimensional local marginal polytope relaxation for MRFs, which is for example studied in \cite{werner2007linear}. We further remark that the case of finite $\Omega$ has been extensively studied in the literature, see, e.g., \cite{kappes2013comparative,wainwright2008graphical} for recent overviews.
	
For the more challenging setting of continuous state-spaces, a major difficulty stems from the fact that the linear programming relaxation \cref{eq:opt_relax} is an infinite-dimensional optimization problem posed in the space of Borel probability measures. Perhaps due to this difficulty, discrete MRF approaches are still routinely applied despite the continuous nature of $\Omega$. This is typically done by considering a finite sample approximation of $\Omega$, so that the infinite-dimensional linear program reduces to a finite-dimensional one. This, however, may lead to discretization errors and comes with an exponential complexity in the
dimension of $\Omega$. 

Due to the fact that $\Pi(\delta_x, \delta_{x'}) = \{ \delta_{(x, x')} \}$ one sees that restricting $\mu_u$ (and therefore $\mu_{uv}$) to be Dirac probability measures, the formulation \cref{eq:opt_relax} reduces to the original problem \cref{eq:mrf}. As one instead considers the larger convex set of all probability measures, it is a \emph{relaxation} which lower bounds \cref{eq:mrf}, i.e., we have the following important relation:
\begin{equation}
\cref{eq:opt_relax} \leq \cref{eq:mrf}.
\end{equation}
When considering a relaxation, the immediate question arises whether this lower bound is attained, i.e., the relaxation is tight and therefore the above inequality holds with equality.
For finite and ordered $\Omega$ the situation is well-understood, see \cite{werner2007linear}. 
For continuous $\Omega\subset \bR^m$ a total order is possible if $m=1$, i.e., $\Omega$ is an interval. Indeed, if in addition $F$ is submodular, see, e.g., \cite[Sec.~2.1]{bach2019submodular}, tightness of the local marginal polytope relaxation can be derived from \cite[Thm.~2]{bach2019submodular} regardless of nonconvexity of $f_u$:
\begin{proposition} \label{prop:tightness_local_marginal}
Let $\emptyset \neq \Omega \subset \bR$ be compact and $f_u:\Omega \to \bR$, $f_{uv}:\Omega \times \Omega \to \bR$ be continuous with $f_u$ possibly nonconvex. If $f_{uv}$ is submodular for all $uv \in \cE$, $F$ is submodular as well and the relaxation is tight, i.e.,
\begin{align} \label{eq:tightness}
\cref{eq:opt_relax} = \cref{eq:mrf}.
\end{align}
\end{proposition}
\begin{proof}
First, we show that $F$ is submodular. The unaries $f_u$ are submodular as functions of a single variable are submodular and hence as a nonnegative sum of submodular functions, $F$ is submodular itself.

For $\mu \in \ccP(\Omega)$ define the cumulative distribution function $\mathcal F_\mu \colon \Omega \to [0, 1]$ as
\begin{align}
\mathcal F_\mu (x) = \mu(\{y \in \Omega \colon y \geq x\})
\end{align}
and the ``inverse'' cumulative distribution function $\mathcal F_\mu^{-1} \colon [0, 1] \to \Omega$ as
\begin{align}
\mathcal F_\mu^{-1} (t) = \sup \{x \in \Omega \colon \mathcal F_\mu (x) \geq t\}.
\end{align}
Submodularity of $F$ now implies by \cite[Thm.~2]{bach2019submodular}
\begin{align}
\cref{eq:mrf} &= \inf_{\mu \in \ccP(\Omega)^\cV} \int_0^1 \sum_{u \in \cV} f_u(\mathcal F_{\mu_u}^{-1}(t)) + \sum_{uv \in \cE} f_{uv}(\mathcal F_{\mu_u}^{-1}(t), \mathcal F_{\mu_v}^{-1}(t)) \dd t \\
&= \inf_{\mu \in \ccP(\Omega)^\cV} \sum_{u \in \cV} \int_0^1 f_u(\mathcal F_{\mu_u}^{-1}(t)) \dd t + \sum_{uv \in \cE} \int_0^1 f_{uv}(\mathcal F_{\mu_u}^{-1}(t), \mathcal F_{\mu_v}^{-1}(t)) \dd t.
\end{align}
Using submodularity of $f_u$ and $f_{uv}$, and applying \cite[Prop.~2]{bach2019submodular} and \cite[Prop.~4]{bach2019submodular} we get
\begin{align}
\int_0^1 f_u(\mathcal F_{\mu_u}^{-1}(t)) \dd t &= \langle \mu_u, f_u \rangle, \\
\int_0^1 f_{uv}(\mathcal F_{\mu_u}^{-1}(t), \mathcal F_{\mu_v}^{-1}(t)) \dd t &= \ot_{f_{uv}}(\mu_u, \mu_v)
\end{align}
by the definition of the optimal transportation in \cref{eq:kant_optimal_transport}. Hence it holds $\cref{eq:opt_relax} = \cref{eq:mrf}$ and the relaxation $\cref{eq:opt_relax}$ is tight.
\end{proof}
\begin{remark}
A solution to the local marginal polytope relaxation $\cref{eq:opt_relax}$ even allows for the reconstruction of a globally optimal solution to the unrelaxed original problem $\cref{eq:mrf}$: More precisely, $\mu \in \ccP(\Omega)^\cV$ is a minimizer of $\cref{eq:opt_relax}$ if and only if $u \mapsto \mathcal F_{\mu_u}^{-1} (t)$ is a minimizer of $\cref{eq:mrf}$ for almost all $t \in [0, 1]$, see \cite[Thm.~2]{bach2019submodular}.
\end{remark}

For $f_{uv} (x, y) = g(x - y)$ and $g$ convex, $f_{uv}$ is submodular, see \cite[Sec.~2.2]{bach2019submodular}. Especially the total variation-like couplings considered in the experimental sections are therefore submodular and thus the local marginal polytope relaxation is tight.

\section{Dual Discretization for the Continuous MRF} \label{sec:dual_discretization}
\subsection{A Reduced Dual Formulation}
A Lagrangian relaxation to the infinite-dimensional problem~\cref{eq:opt_relax} is obtained by dualizing the marginalization constraints~\cref{eq:marginalization_constraints} for each $e \in \cE$ with Lagrange multipliers $\lambda_e \in \cC(\Omega)^2$. This is equivalent to a substitution of the optimal transportation with its dual formulation.
Adopting the approach of \cite{fix2014duality} a finite-dimensional problem is obtained by approximating the Lagrange multipliers in terms of finite linear combinations of certain basis functions. These approximations are chosen in such a way that the classical Lagrangian relaxation and the discrete approach, described above, are special cases of the considered framework.

In contrast to previous approaches \cite{fix2014duality}, we restrict ourselves to metric pairwise terms $f_{uv}(x,y)=d(x,y)$ which eventually leads to a different dual formulation and turns out more tractable: Then, the optimal transportation $\ot_{f_{uv}}(\mu_u, \mu_v)$ in Problem~\cref{eq:opt_relax} is the Wasserstein-$1$ distance $W_1^d(\mu_u, \mu_v)$ induced by the metric $d$ between $\mu_u$ and $\mu_v$. Furthermore we assume that $f_u:\Omega \to \bR$ is lsc and $\Omega\subset \bR^m$ is a compact nonempty set. In particular this implies that $f_u$ is proper and bounded from below.

Thanks to optimal transport duality theory \cite{Vil08} we are therefore able to obtain a more compact dual formulation, which is instrumental to derive a tractable implementation for a piecewise polynomial discretization later on: More precisely, we substitute $\ot_{f_{uv}}(\mu_u, \mu_v)=W_1^d(\mu_u, \mu_v)$ in Problem~\cref{eq:opt_relax} with its dual formulation 
\begin{align} \label{eq:wasserstein1_dual}
W_1^d(\mu_u, \mu_v) = \sup_{\lambda \in \Lip_d(\Omega)} \int \lambda(x) \dd(\mu_u- \mu_v)(x),
\end{align}
where $\lambda$ is $1$-Lipschitz with respect to the metric $d$, i.e., 
\begin{align}
\lambda \in \Lip_{d}(\Omega) = \{ \lambda : \Omega \to \bR : |\lambda(x) - \lambda(y)| \leq d(x, y) \}.
\end{align}
In contrast to the general dual formulation of optimal transport, which involves two Lagrange multipliers per edge, interacting via the constraint set, the Wasserstein-$1$ dual involves only a single dual variable for each edge, that satisfies a Lipschitz constraint. We show in \cref{sec:cone_program} that even though this formulation still involves infinitely many constraints, thanks to convex algebraic geometry~\cite{blekherman2012semidefinite}, there exists a tractable finite representation of this constraint set in terms of SDP.

For notational convenience, we assign an arbitrary orientation to the edges in $\mathcal{G}$. After introducing a graph divergence operator $\Div : \cC(\Omega)^\cE \to \cC(\Omega)^\cV$ defined by:
\begin{equation}
-(\Div  \lambda)_u =  \sum_{v : (u,v) \in \cE}\lambda_{(u,v)} - \sum_{v : (v,u) \in \cE} \lambda_{(v,u)},
\end{equation}
an interchange of $\min$ and $\sup$ yields the following reduced dual problem, which is the starting point for further discussion and a tractable implementation:
\begin{equation*} 
\begin{aligned}
&\sup_{\lambda\in \cC(\Omega)^\cE} \left\{D(\lambda):=-\sum_{u \in \cV}  \sigma_{\cP(\Omega)}(-f_u +(\Div \lambda)_u) - \sum_{e \in \cE} \iota_{\mathcal{K}}(\lambda_e) \right\},
\end{aligned}
\tag{R-D}
\label{eq:dualMPTsmall}
\end{equation*}
where $\mathcal{K}=\Lip_{d}(\Omega)$ and $\sigma_{\ccP(\Omega)}(-f_u + (\Div \lambda)_u) = \sup_{\mu \in \ccP(\Omega)} \langle \mu, -f_u + (\Div \lambda)_u \rangle$ is the support function of $\ccP(\Omega)$ at $-f_u + (\Div \lambda)_u$, which, thanks to \cref{lem:lp_exactness}, can be rewritten:
\begin{align} \label{eq:inner_minimization}
-\sigma_{\ccP(\Omega)}(-f_u + (\Div \lambda)_u) = \min_{x \in \Omega} (f_u - (\Div \lambda)_u)(x).
\end{align}
Under mild assumptions we have strong duality:
\begin{proposition}
\label{prop:map_inference_mrf}
Let $\emptyset\neq \Omega \subset \bR^m$ be compact, let $f_u : \Omega \to \bR$ be lsc, and $f_{uv}=d$, where $d : \Omega^2 \to \bR$ is lsc and a metric. Then, the following strong duality holds:
\begin{align} \label{eq:strong_duality_reduced}
\cref{eq:opt_relax} = \cref{eq:dualMPTsmall},
\end{align}
and a maximizer of \cref{eq:dualMPTsmall} exists.
\end{proposition}
\begin{proof}
Define $f:\cM(\Omega)^\cV \to \exR$
$$
f(\mu):=\sum_{u \in \cV} \langle \mu_u, f_u \rangle + \iota_{\cP(\Omega)}(\mu_u),
$$
which is convex, proper and lsc due to \cite[Lem.~1.1.3]{San15}.
Likewise, define the functional $g:\cM(\Omega)^\cE \to \exR$
$$
g(\nu):=\sum_{e \in \cE} \sup_{\lambda_e \in \cK} \langle \nu_e, \lambda_e \rangle = \sum_{e \in \cE} \sigma_{\mathcal{K}}(\nu_e),
$$
which is proper convex lsc, as it is a pointwise supremum over linear functionals.
Define $\nabla:\cM(\Omega)^\cV \to \cM(\Omega)^\cE$
$$
(\nabla \mu)_{(u,v)} = \mu_u -\mu_v,
$$
which is bounded and linear.
Then we compute the convex conjugates $f^*:\cC(\Omega)^\cV \to \exR$ to
$$
f^*(\theta)= \sum_{u \in \cV} \sigma_{\cP(\Omega)}(\theta_u - f_u),
$$
and the functional $g^*:\cC(\Omega)^\cE \to \exR$
$$
g^*(\lambda)= \sum_{e \in \cE} \iota_{\mathcal{K}}(\lambda_e),
$$
and $\nabla^*:\cC(\Omega)^\cE \to \cC(\Omega)^\cV = -\Div$.

Choose $x \in \Omega$ and define $\mu:=(\delta_x)^\cV$. Then $f(\mu) = \sum_{u \in \cV} f_u(x) < \infty$. In addition we have $\langle \delta_x -\delta_x, \lambda_{e} \rangle =0$ for all $\lambda_e \in \cC(\Omega)$ and therefore $g(\nabla \mu) = 0$. Now consider a weakly$^*$-convergent sequence $\cM(\Omega)^\cE\ni\nu^t \overset{\ast}{\rightharpoonup} \nabla \mu$. This means for all $e\in \cE$ and $\lambda_e \in \cC(\Omega)$ we have $\langle \nu_e^t, \lambda_e \rangle \to \langle (\nabla \mu)_e, \lambda_e \rangle =0$.
In particular this implies that $g(\nu^t) \to 0$, and therefore $g$ is continuous at $\nabla \mu$.
Then we can invoke the Fenchel--Rockafellar duality~Theorem~\cite[Thm.~1]{Ro67} and obtain that
$\cref{eq:opt_relax} = \cref{eq:dualMPTsmall}$ and a maximizer of \cref{eq:dualMPTsmall} exists.
\end{proof}

\subsection{Discretization of the Reduced Dual Formulation for the Metric MRF}
The next step in our strategy to obtain a tractable formulation is to restrict $\lambda_{(u,v)} \in \cC(\Omega)$ in Problem~\cref{eq:dualMPTsmall} to a subspace $\Lambda = \langle \varphi_0, \ldots, \varphi_n \rangle$ spanned by basis functions $\varphi_k \in \cC(\Omega)$ and instead consider $\sup_{\lambda \in \Lambda^\cE} D(\lambda)$.
In the situation of \cref{prop:tightness_local_marginal}, for any hierarchy of increasingly expressive dual subspaces $\Lambda_1 \subset \Lambda_2 \subset \cdots \subset \cC(\Omega)$, where $\Lambda_1$ is the space of affine functions, the induced hierarchy of dual problems
\begin{align} \label{eq:hierarchy}
\cref{eq:classical_lagr_relaxation} &= \max_{\lambda \in (\Lambda_1)^\cE}\; D(\lambda) \leq \max_{\lambda \in (\Lambda_2)^\cE}\; D(\lambda) \leq \cdots \leq \max_{\lambda \in \cC(\Omega)^\cE} \;D(\lambda) \overset{\cref{eq:strong_duality_reduced}}{=}\cref{eq:opt_relax} \overset{\cref{eq:tightness}}{=} \cref{eq:mrf},
\end{align}
leads to a reduction of the duality gap where the first equality holds due to the explanation that will be given in \cref{sec:lifting_primal_problem}. In general, properness of the inclusions in the hierarchy of the dual subspaces need not imply strict inequalities in \cref{eq:hierarchy}.
However, for a piecewise polynomial hierarchy with increasing degrees and/or number of pieces we will show in \cref{prop:gapconvergence} that the duality gap $\cref{eq:mrf} - \max_{\lambda \in \Lambda^\cE}\; D(\lambda)$ eventually vanishes with rate $\mathcal{O}(1 / (K \cdot \sqrt{\deg}))$ as the number of pieces $K$ and/or the degree $\deg$ goes to $\infty$.

In the context of dual discretization it is crucial to discuss the duality between finite-dimensional subspaces of $\cC(\Omega)$ and certain equivalence classes of measures and in particular moment spaces. This will be particularly important in \cref{sec:lifting} where we derive and study the primal problem corresponding to the discretized dual problem:

Due to Riesz' theorem the dual space of continuous functions on a compact set $\cC(\Omega)$ is given by the space of Radon measures $\ccM(\Omega)$. Note that conversely for any $\lambda \in \Lambda$ we obtain a linear functional on $\ccM(\Omega)$ by mapping $\mu \in \ccM(\Omega)$ to $\mu \mapsto \langle \mu, \lambda \rangle = \int \lambda(x) \dd \mu(x)$.
Although any $\mu \in \ccM(\Omega)$ maps to a linear functional on $\Lambda$, this mapping is not injective. In particular there exist distinct measures $\mu \neq \nu$ such that $\langle \mu, \lambda \rangle = \langle \nu, \lambda \rangle$ for every $\lambda \in \Lambda$, i.e.,\ $\mu$ and $\nu$ induce the same linear functional on $\Lambda$. Instead, invoking \cite[Thm.~4.9]{rudin1991functional} the dual space $\Lambda^*$ can be related to a quotient space of $\ccM(\Omega)$ as follows:
$\Lambda^*$ is isometrically isomorphic to a quotient space on $\ccM(\Omega)$ via
\begin{align}
\Lambda^* \cong \ccM(\Omega) / \Lambda^0 := \left\{ \mu + \Lambda^0 \colon \mu \in \ccM(\Omega) \right\},
\end{align}
where $\Lambda^0 := \left\{ \mu \in \ccM \colon \langle \mu, \lambda \rangle = 0 \text{ for all $\lambda \in \Lambda$} \right\}$ is the annihilator of $\Lambda$. Any $\mu \in \ccM(\Omega)$ generates an equivalence class in $\ccM / \Lambda^0$ denoted by $[\mu]$ and the corresponding element in $\Lambda^*$ is denoted by $[\mu]_{\Lambda}$.

Further restricting to the cone of functions in $\Lambda$ that are nonnegative on $\Omega$, the corresponding dual cone has a nice interpretation: It can be identified with a certain space of moments: 
Given a measure $\mu \in \ccM_+(\Omega)$, for a particular choice of basis functions $\{\varphi_0, \varphi_1, \ldots, \varphi_n\}$, we refer to $\langle \mu, \varphi_k \rangle=\int \varphi_k(x) \dd \mu(x)$ as the $k$\textsuperscript{th} \emph{moment} of $\mu$ and likewise the moment space is
\begin{align} \label{eq:nonnegative_moments}
(\bbM)_+ = \left\{ y \in \bR^{n+1} : \exists \mu \in \cM_+(\Omega) : y_k = \langle \mu, \varphi_k \rangle \right\} \subset \bR^{n+1}.
\end{align}
This is discussed in \cref{sec:cone_program} in the context of implementation. The above result then shows that the moment space $(\bbM)_+$ identifies the space of equivalence classes of nonnegative measures, where two nonnegative measures are considered equivalent if they have the same moments.

For the remainder of this paper, for simplicity, we fix the basis $\{\varphi_0, \varphi_1, \ldots, \varphi_n\}$ and identify the dual variable $\lambda_{(u,v)}$ with its vector of coefficients $p_{(u,v)} \in \bR^{n+1}$.

In view of the Wasserstein-$1$ dual in \cref{eq:wasserstein1_dual} constant components in $\lambda_{(u,v)} \in \Lambda$ cancel out and therefore, assuming $\varphi_0$ is the constant function, we can choose $p_0=0$. As a consequence we can represent $(\Div \lambda)_u$ in terms of its coefficients $(\Div_\Lambda p)_u$ w.r.t. the basis functions $\{\varphi_1, \ldots, \varphi_n\}$ that span the subspace $\Lambda \subset \cC(\Omega)$, for an appropriately chosen linear mapping $\Div_\Lambda :(\bR^n)^\cE \to (\bR^n)^\cV$.

We identify the basis functions $\varphi_k$ as the component functions of the mapping $\varphi : \Omega \to \bR^n$. Then, substituting the representation $\lambda_{(u,v)} = \langle p_{(u,v)}, \varphi(\cdot) \rangle$ in the support function we obtain thanks to \cref{eq:inner_minimization}
the following discretized dual problem:
\begin{equation*}
\sup_{p \in (\bR^n)^\cE} \left\{D_\Lambda(p) := \sum_{u \in \cV} ~ \min_{x \in \Omega} \; -\big\langle (\Div_\Lambda p)_u, \varphi(x) \big \rangle + f_u(x) - \sum_{e \in \cE}\iota_{\mathcal{K}_\Lambda}(p_e)\right\} = \sup_{\lambda \in \Lambda^\cE} D(\lambda),
\tag{dR-D}
\label{eq:discrete_dual}
\end{equation*}
where the constraint set $\mathcal{K}_{\Lambda}$ is the set of coefficients of Lipschitz functions in $\Lambda$:
\begin{align}
\mathcal{K}_{\Lambda} := \{ p \in \bR^n : \langle p, \varphi(\cdot) \rangle \in \Lip_{d}(\Omega) \}.
\end{align}

\section{Lifted Convex Conjugates: A Primal View} \label{sec:lifting}
\subsection{The Discretized Primal Problem} \label{sec:lifting_primal_problem}
A central goal of this section is to study the primal problem induced by the dual discretization. This allows us to show that discretizations which obey a certain extremality property conserve the original cost when restricting to discretized Diracs, see \cref{thm:general_binconj}.

In order to derive the discretized primal problem note that for any $f_u:\Omega \to \bR$ the expression $\sup_{x \in \Omega} \langle (\Div_\Lambda p)_u, \varphi(x) \rangle - f_u(x)$ in \cref{eq:discrete_dual} (up to the presence of $\varphi(x)$) resembles the form of a convex conjugate.
Indeed, exploiting the notion of an extended real-valued function this is the convex conjugate of a lifted version $f_\Lambda :\bR^n \to \exR$ of $f:\Omega \to \bR$ which is defined by
\begin{align} \label{eq:lifted_cost}
\lifted{f}{\Lambda} (y) = \begin{cases}
f(x), & \text{if $y = \varphi(x)$ for some $x \in \Omega$,}\\
\infty, & \text{otherwise},
\end{cases}
\end{align}
and gives rise to our notion of a \emph{lifted convex conjugate} which is studied in depth in the course of this section.

Since the ``lifting'' to a higher dimensional space happens through the application of the mapping $\varphi$, whose component functions are the elements of the basis, it is intuitive to refer to $\varphi$ as the lifting aka feature map in this context. Likewise we refer to the curve described by $x \mapsto \varphi(x)$ as the moment curve.

With this construction at hand and via an application of Fenchel--Rockafellar duality we are ready to state the primal problem of the discretized dual problem in terms of the lifted biconjugates $(f_u)_\Lambda^{**}$:
\begin{align}
\min_{y \in (\bR^n)^\cV} ~\left\{\mathcal{F}_\Lambda(y):=\sum_{u \in \cV} (f_u)_\Lambda^{**}(y_u) + \sum_{e \in \cE} \sigma_{\mathcal{K}_{\Lambda}}\big((\nabla_\Lambda\; y)_e \big) \right\},
\tag{dR-P}\label{eq:discrete_primal} 
\end{align}
where $\nabla_\Lambda:=-\Div_\Lambda^*$ is the negative adjoint of the graph divergence, the graph gradient operator. A comparison of \cref{eq:discrete_primal} with the convex relaxation~\cref{eq:classical_lagr_relaxation} shows the effect of the dual discretization, where the classical convex biconjugates are replaced with biconjugates of the ``lifted'' functions $(f_u)_\Lambda$. Indeed, without lifting, i.e., $\varphi(x)=x$, we recover the classical biconjugates and therefore the convex relaxation~\cref{eq:classical_lagr_relaxation}. In particular this explains the first equality in the hierarchy~\cref{eq:hierarchy}.

\subsection{Lifting and Moments}
\begin{figure}[t!]
\centering
\begin{subfigure}[b]{0.27\linewidth}
\centering
\includegraphics[width=\linewidth]{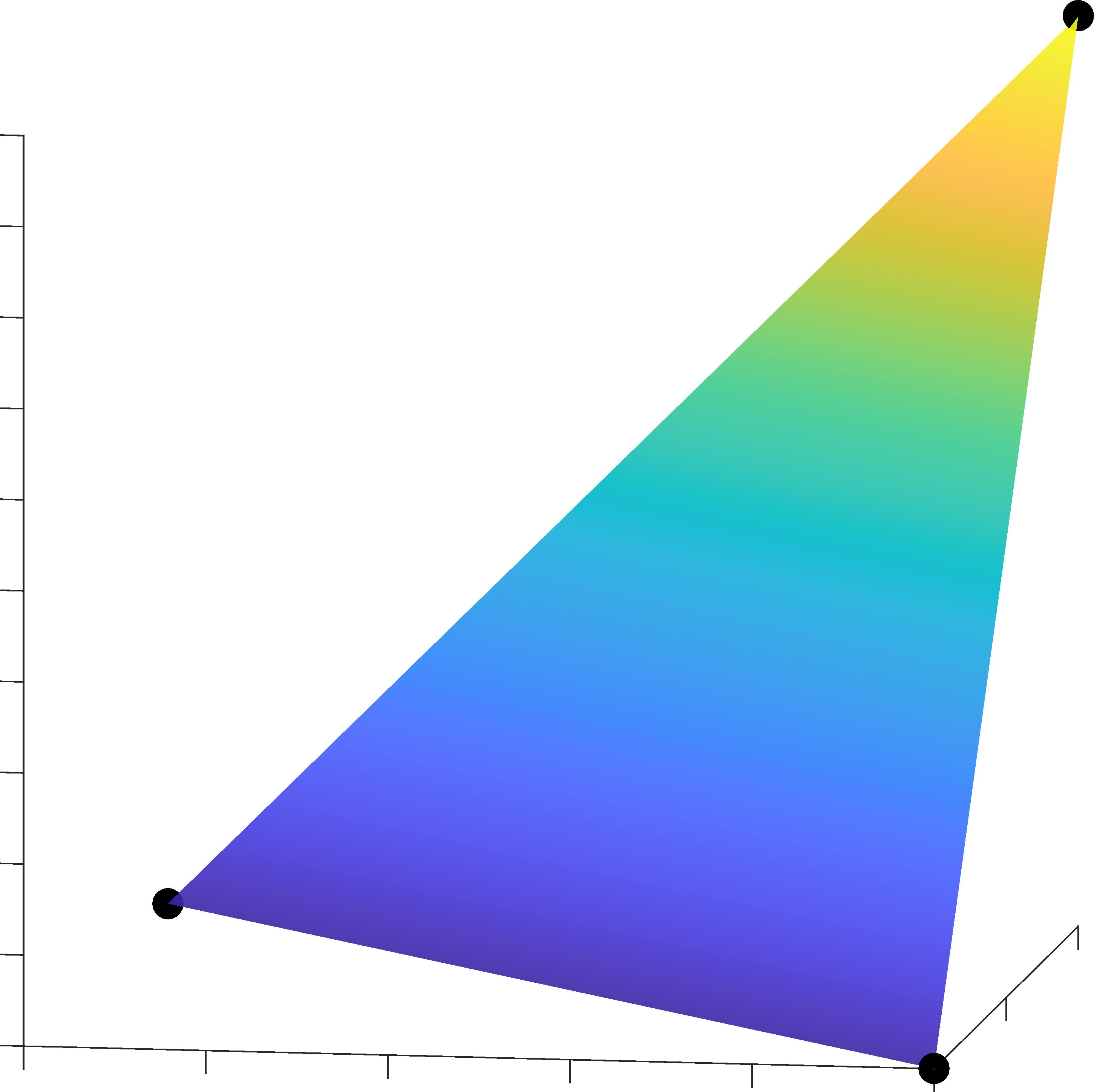}
\caption{Standard Lifting}
\end{subfigure}
\hfill
\begin{subfigure}[b]{0.27\linewidth}
\centering
\includegraphics[width=\linewidth]{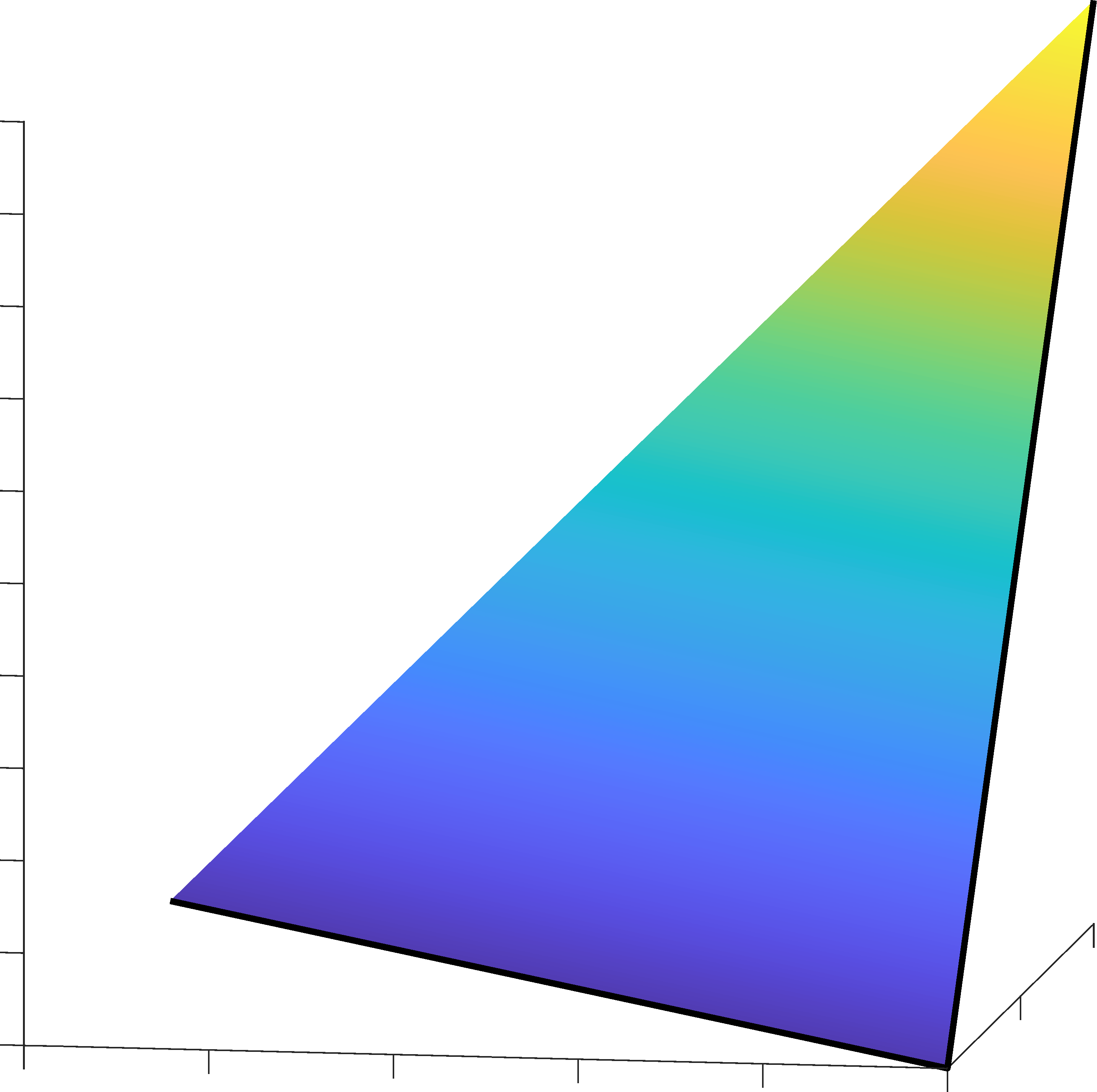}
\caption{Sublabel Lifting \cite{laude16eccv}}			
\end{subfigure}
\hfill
\begin{subfigure}[b]{0.27\linewidth}
\centering
\includegraphics[width=\linewidth]{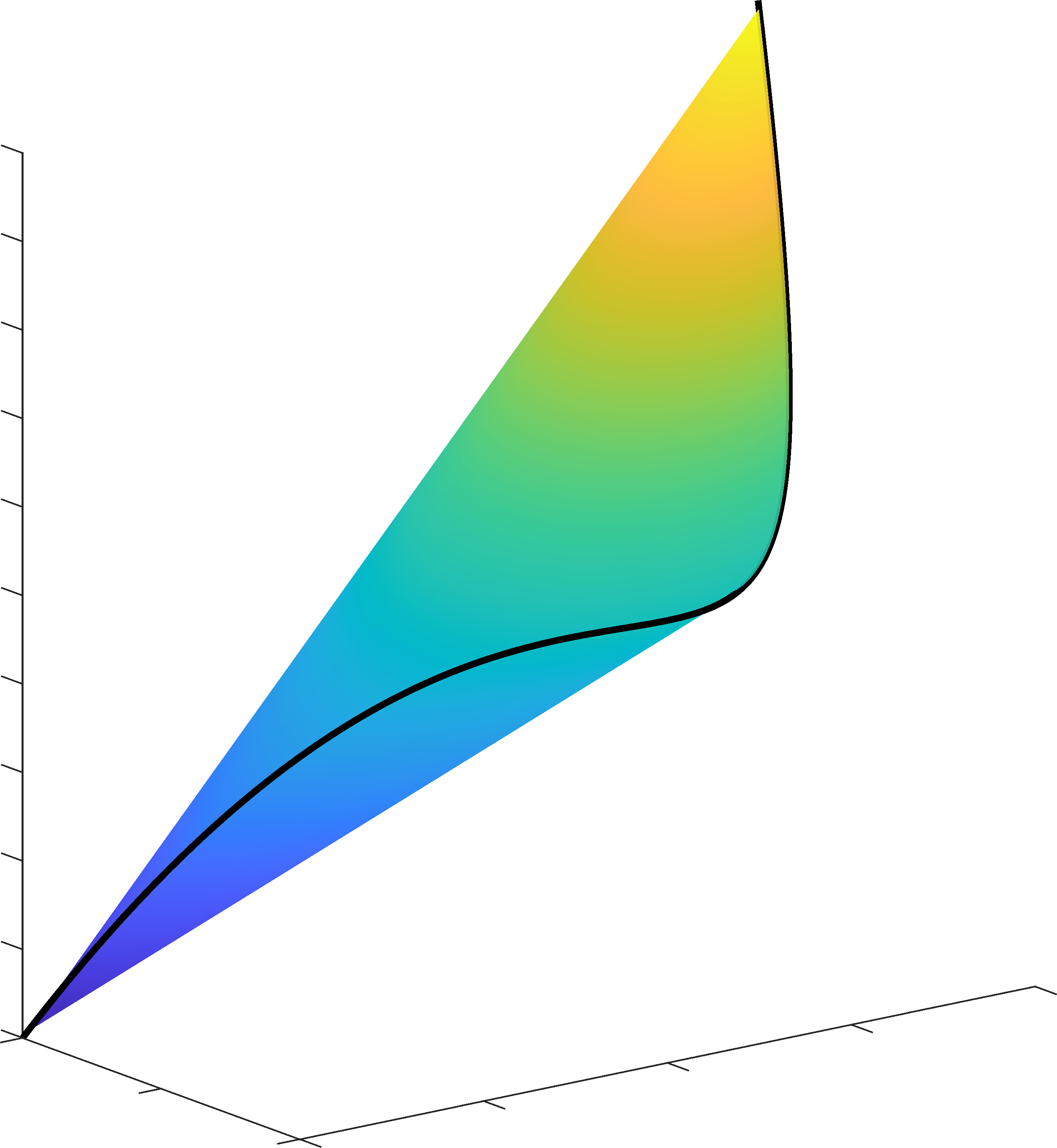}
\caption{Polynomial Lifting (Ours)}
\end{subfigure}
\caption{Different finite-dimensional approximations $\bbP$ of the infinite-dimensional space of probability measures $\ccP([-1, 1])$ with $\Omega=[-1,1]$. Left and middle: $2$-dimensional probability simplex and right: Monomial moment space $\con\{ (x, x^2, x^3) : x \in [-1,1]\}$ of degree $3$. The approximations are obtained as the convex hulls of the black curves $x \mapsto \varphi(x)$ for 3 different choices of $\varphi$. From left to right cf. \cref{ex:sampling}, \cref{ex:piecewise_linear} and \cref{ex:polynomial}. 
In all cases, the black curves themselves correspond to Dirac measures and the convex hulls of the curves correspond to the space of probability measures. In contrast to the simplex that only has a finite number of extreme points, the monomial moment curve comprises a continuum of extreme points so that no Dirac measure on the monomial moment curve can be expressed as a convex combination of other Diracs.
}
\label{fig:moment_curve}
\end{figure}
A fundamental question in the study of the lifted biconjugate $(f_u)_\Lambda^{**}$ is the characterization of its domain $\dom (f_u)_\Lambda^{**}=\con \varphi(\Omega)$, which, assuming $f_u$ is bounded from below, equals the convex hull of the image of $\varphi$. For that purpose we define the probability moment space, as the set of all vectors $y \in \bR^n$ for which there exists a probability measure $\mu\in \ccP(\Omega)$ such that the $k^{\text{th}}$ component of $y$ is the $k^{\text{th}}$ moment of $\mu$:
\begin{align} \label{eq:moments}
\bbP = \left\{ y \in \bR^n : \exists \mu\in \ccP(\Omega), y_k = \langle \mu, \varphi_k \rangle \right\},
\end{align}

In other words $\bbP$ is the set of all ``infinite'' convex combinations of points $\varphi(x) \in \bR^n$ with $x \in \Omega$, i.e., of points that belong to the image of~$\varphi$. 
The following relation to $(\bbM)_+$ holds: If $\varphi_0\equiv 1$, $\cP_\Lambda = \{y \in (\cM_\Lambda)_+ : y_0 = 1\}$. Also see \cref{lem:nonnegativity_cone}.

Furthermore, note that for some $x \in \Omega$ the moment vector $\varphi(x)\! = \int \varphi(x') \dd \delta_{x}(x')$ of a Dirac measure $\delta_x$ sets up a correspondence between lifted points $\varphi(x)$ and the Dirac measure $\delta_x$ itself.
Thus the moment space can be written equivalently as the set of all finite convex combinations of moment vectors of Dirac measures, in the same way the unit simplex is the convex hull of the unit vectors:
\begin{proposition} \label{prop:convex_hull_moments}
Let $\emptyset \neq \Omega$ be compact and $\Lambda=\langle \varphi_0,\ldots ,\varphi_n\rangle \subset \cC(\Omega)$ with $\varphi_0 \equiv 1$. Then every moment vector of a probability measure is a finite convex combination of moment vectors of Dirac measures, i.e.\
\begin{align}
\con({\varphi(\Omega)}) = \cP_\Lambda,
\end{align}
and $\cP_\Lambda$ is nonempty and compact.
\end{proposition}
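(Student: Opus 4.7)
The plan is to prove the two inclusions $\con(\varphi(\Omega)) \subseteq \cP_\Lambda$ and $\cP_\Lambda \subseteq \con(\varphi(\Omega))$, deducing compactness along the way. Nonemptiness is immediate: since $\Omega \neq \emptyset$, picking any $x \in \Omega$ gives $\varphi(x) = \int \varphi \dd\delta_x \in \cP_\Lambda$ via the Dirac $\delta_x \in \cP(\Omega)$.

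For the easy inclusion $\con(\varphi(\Omega)) \subseteq \cP_\Lambda$, I would take a finite convex combination $y = \sum_{i=1}^N \alpha_i \varphi(x_i)$ with $\alpha_i \geq 0$, $\sum_i \alpha_i = 1$, $x_i \in \Omega$, and observe that the discrete probability measure $\mu = \sum_{i=1}^N \alpha_i \delta_{x_i} \in \cP(\Omega)$ has moments $\langle \mu, \varphi_k\rangle = \sum_i \alpha_i \varphi_k(x_i) = y_k$ coordinate-wise.

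The more substantial inclusion $\cP_\Lambda \subseteq \con(\varphi(\Omega))$ I would handle via a separation argument. The key preliminary is to note that $\varphi:\Omega \to \bR^n$ is continuous, $\Omega$ is compact, so $\varphi(\Omega) \subset \bR^n$ is compact; Carath\'eodory's theorem then yields that $\con(\varphi(\Omega))$ is a compact (in particular closed) convex subset of $\bR^n$. Now suppose toward contradiction that some $y \in \cP_\Lambda$ does not lie in $\con(\varphi(\Omega))$. By the Hahn--Banach separation theorem for closed convex sets in finite dimensions, there exist $p \in \bR^n$ and $\alpha \in \bR$ with
\begin{equation}
\langle p, y \rangle > \alpha \geq \langle p, \varphi(x) \rangle \quad \text{for all } x \in \Omega.
\end{equation}
On the other hand, by definition of $\cP_\Lambda$ there exists $\mu \in \cP(\Omega)$ with $y_k = \langle \mu, \varphi_k\rangle$, and linearity plus monotonicity of the integral gives
\begin{equation}
\langle p, y \rangle = \sum_{k} p_k \langle \mu, \varphi_k\rangle = \Big\langle \mu,\, \sum_k p_k \varphi_k \Big\rangle \leq \alpha,
\end{equation}
contradicting the strict inequality above.

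Compactness of $\cP_\Lambda$ then follows for free from the just-proved equality $\cP_\Lambda = \con(\varphi(\Omega))$ together with the compactness of $\con(\varphi(\Omega))$ already noted. I do not foresee a real obstacle here; the only point that needs a little care is ensuring the separation is strict, which is why the Carath\'eodory-based compactness (and hence closedness) of $\con(\varphi(\Omega))$ in finite dimensions is invoked before separating. The assumption $\varphi_0 \equiv 1$ plays no role in these set-theoretic identities, but is what permits dropping the $0^{\text{th}}$ coordinate and still obtaining the probability normalization implicitly from $\mu \in \cP(\Omega)$.
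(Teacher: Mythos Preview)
Your proof is correct and, at its core, relies on the same convex-duality principle as the paper (equality of closed convex sets via separating hyperplanes / support functions). The organisation, however, is genuinely different. The paper first establishes closedness of $\cP_\Lambda$ via Prokhorov's theorem (weak$^*$ sequential compactness of $\cP(\Omega)$), then shows that the support functions of $\varphi(\Omega)$ and $\cP_\Lambda$ coincide by invoking Lem.~\ref{lem:lp_exactness}, and only at the very end uses that the convex hull of a compact set is compact. You reverse the order: you establish compactness of $\con(\varphi(\Omega))$ up front via Carath\'eodory, which immediately enables a strict Hahn--Banach separation and lets you derive $\cP_\Lambda \subseteq \con(\varphi(\Omega))$ directly, without ever needing Prokhorov's theorem or a separate closedness argument for $\cP_\Lambda$. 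This makes your argument slightly more elementary and self-contained. What the paper's route buys is that closedness of $\cP_\Lambda$ is proved independently of the equality, which can be a useful standalone fact; your route obtains it only as a corollary of $\cP_\Lambda = \con(\varphi(\Omega))$.
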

\begin{proof}
Note that $\cP_\Lambda$ is convex and bounded. It is also closed: To this end consider a sequence $y^t \to y$ with $y^t \in\cP_\Lambda$. This means for any $t$ there exists $\mu^t \in \cP(\Omega)$ with $y_k^t = \int_\Omega \varphi_k(x) \dd \mu^t(x)$ for $k = 1, \ldots, n$. Since $\Omega$ is compact, due to Prokhorov's theorem, see \cite[Sec.~1.1]{San15}, there exists a weakly$^*$ convergent subsequence $\mu^{t_j} \overset{\ast}{\rightharpoonup} \mu$ and, hence, $\int_\Omega \varphi_k(x) \dd \mu^{t_j}(x) = y_k^{t_j} \to  \int_\Omega \varphi_k(x) \dd \mu(x) = y_k$. Therefore $y \in \cP_\Lambda$. Next we show identity of the support functions of the convex sets $\con({\varphi(\Omega)})$ and $\cP_\Lambda$ as this implies the equality of the sets. To this end let $f \in \Lambda=\langle \varphi_0,\ldots ,\varphi_n\rangle$ with $\varphi_0 =1$. We write $f(x) = \langle \varphi(x),f \rangle$. Assume that $f_0=0$. We have the identities:
\begin{align*}
\sigma_{\varphi(\Omega)}(-f) =-\min_{x \in \Omega} ~\langle \varphi(x), f \rangle =-\min_{\mu\in \cP(\Omega)} ~\int_\Omega f (x) \dd \mu(x) = -\min_{y\in \cP_\Lambda} ~\langle y, f \rangle =\sigma_{\cP_\Lambda}(-f),
\end{align*}
where the first equality follows by the definition of the support function, the second equality by \cref{lem:lp_exactness} and the third equality by the definition of $\cP_\Lambda$, the identity 
$$
\min_{\mu\in \cP(\Omega)} ~\int_\Omega f (x) \dd \mu(x) = \min_{\mu\in \cP(\Omega)} \sum_{k=1}^n f_k \int_\Omega \varphi_k(x) \dd\mu(x),
$$
and the substitution $y_k =  \int_\Omega \varphi_k(x) \dd\mu(x)$.
Since for each such $f$ the support functions are equal we have equality of the support functions of $\varphi(\Omega)$ and $\cP_\Lambda$. Since $\Omega$ is compact and $\varphi$ continuous and the convex hull of a compact set stays compact, cf. \cite[Cor. 2.30]{RoWe98}, we can replace $\varphi(\Omega)$ with its convex hull $\con \varphi(\Omega)$ and the conclusion follows.	
\end{proof}
	
The proof of the above proposition also reveals that for $f \in \Lambda$, the lifted biconjugate is a linear function over the moment space,

\begin{align} \label{eq:biconj_linear_lifted}
\lifted{f}{\Lambda}^{**}(y) = \langle y, f \rangle + \ind_{\bbP}(y),
\end{align}
whose minimization is actually equivalent to minimizing the original function.

Specializing $\varphi_k$ to the monomial basis and $\Omega$ to a set defined via polynomial inequalities, this yields the formulation proposed by \cite{lasserre2001global,lasserre2002semidefinite} for constrained polynomial optimization.

More generally, a meaningful notion of moments $y_k$ is induced by a lifting map $\varphi$ which is a homeomorphism between $\Omega$ and $\varphi(\Omega)$. 
\begin{definition}[lifting map] \label{def:lifting_map}
Let $\Omega$ be compact and be nonempty. Then we say the mapping $\varphi :\Omega \to \bR^n$ is a lifting map if $\varphi$ is continuous on $\Omega$ and injective with continuous inverse $\varphi^{-1}:\varphi(\Omega) \to \bR^m$.
\end{definition}
	
It is instructive to discuss possible choices for $\varphi$ including the ones that correspond to existing discretizations for the continuous MRF such as the discrete approach and the piecewise linear approach. In the latter two cases, the moment space is merely the unit simplex. In that sense, the monomial moment space can be interpreted as a ``nonlinear probability simplex'', which, in contrast to the unit simplex, has infinitely many extreme points, see \cref{fig:moment_curve}. For the same reason, as we will see in the course of this section, it better suits the continuous nature of our optimization problem.

The discrete sampling-based approach is recovered by the following choice of $\varphi$:
\begin{example} \label{ex:sampling}
We discretize the interval $\Omega=[a,b]$, $a<b$ and re-define $\Omega := \{t_1, \ldots, t_n \}$ with $t_k \in [a,b]$, $t_k < t_{k+1}$. For any $t_k \in \Omega$ let
$\varphi(t_k) = e_k$ with $e_k \in \bR^{n}$ being the $k^{\text{th}}$ unit vector. As a result $\varphi$ is the canonical basis and spans the space of discrete functions $f : \{t_1, \ldots, t_n \} \to \bR$ and
the moment space $\con({\varphi(\Omega)}) = \bbP$ is given by the unit simplex.
\end{example}	
The above example can be extended by assigning any points $x \in (t_k, t_{k+1})$ to points on the connecting line between two corresponding Diracs which results in a more continuous formulation:
\begin{example} \label{ex:piecewise_linear}
Let $\Omega=[a,b]$, $a < b$: 
Let $t_k < t_{k+1}$ and $t_1 = a$, $t_{n} = b$ be a sequence of knots that subdivide the interval $\Omega$ into $n-1$ subintervals $[t_k, t_{k+1}] =:\Omega_k$. We define $\varphi(x) = \alpha e_k + (1-\alpha) e_{k+1}$,
with $e_k \in \bR^{n}$ being the $k^{\text{th}}$ unit vector, $\alpha\in [0,1]$ such that $\alpha t_k + (1-\alpha) t_{k+1} = x$. This yields a  $2$-sparse lifting map that has been used in
the related work of \cite{laude16eccv}. Its component functions $\varphi_k$ are the finite element hat basis functions that span the space $\Lambda$ of univariate piecewise linear functions on $\Omega$ and the moment space $\con({\varphi(\Omega)}) = \bbP$ is given
by the unit simplex.
\end{example}
For $\varphi_k$ being chosen as the monomials we obtain the classical notion of moments:
\begin{example} \label{ex:polynomial}
For $\varphi_0 = 1$, the space of univariate polynomials $\Lambda=\bR[x]$ with maximum degree $n$ is spanned by the monomials:
\begin{align} \label{eq:moment_curve}
\varphi(x) = (x, x^2, \ldots, x^{n}),
\end{align}
and $\con({\varphi(\Omega)}) = \bbP$ is the monomial moment space.
\end{example}

\begin{example} \label{ex:caratheodory_curve}
Identifying $\Omega = \{x \in \bR^2 \colon \lVert x \rVert_2 = 1 \}$ with the complex unit circle $\Omega \cong \{ z \in \mathbb C \colon \lvert z \rvert = 1\}$ and again assuming $\varphi_0 = 1$, the mapping
\begin{align}
\varphi (z) = (\Real(z), \Img(z), \ldots, \Real(z^n), \Img(z^n)) \in \bR^{2n}
\end{align}
spans the space $\Lambda$ of real trigonometric polynomials of maximum degree $n$. Parametrizing elements $z \in \Omega$ via the bijection between $z=e^{i\omega}$ and its angle $\omega \in [0, 2\pi )$ the components of $\
\varphi$ are the Fourier basis functions, which define the Carath\'eodory curve \cite{caratheodory1911variabilitatsbereich}. The convex hull $\con({\varphi(\Omega)}) = \bbP$ is the trigonometric moment space.
\end{example}

\subsection{Extremal Subspaces}
In contrast to the piecewise linear lifting, the (trigonometric) polynomial lifting is extremal in the sense that no Dirac measure on the  moment curve can be expressed as a convex combination of other Diracs, which sets up a certain one-to-one correspondence between Diracs $\delta_x$ and lifted points $\varphi(x)$. This is illustrated in \cref{fig:moment_curve} monomial case. 

More formally, we call a lifting map $\varphi$ an extremal curve if each $y \in \varphi(\Omega)$ is an extreme point of $\con(\varphi(\Omega))$ defined according to \cite[Sec. 18]{Roc70}.
\begin{definition}[extreme points]
Let $C$ be a convex set and $y \in C$. Then $y$ is called an extreme point of $C$ if there is no way to express $y$ as a convex combination $y=(1-\alpha) x + \alpha z$ of $x,z \in C$ and $0 < \alpha < 1$, except by taking $y = x =z$.
\end{definition}
\begin{definition}[extremal moment curve] \label{def:extremal_curve}
Let $\Omega$ be compact and be nonempty. Then we say the mapping $\varphi :\Omega \to \bR^n$ is an extremal moment curve if $\varphi$ is a lifting map and any point $y \in \varphi(\Omega) \subset \bR^n$ is an extreme point of $\con \varphi(\Omega)$.
\end{definition}
Via a change of basis it becomes clear that the definition of extremality is independent of a specific choice of a basis for $\Lambda$. Therefore extremality is rather a property of the subspace $\Lambda$. This also motivates the following lemma which shows that extremality is inherited along a hierarchy $\Theta \subset \Lambda \subset \cC(\Omega)$.

\begin{lemma}[extremal subspaces] \label{lem:extremal_subspace}
Let $\emptyset\neq \Omega \subset \bR^m$ be compact. Let $\Theta \subset \Lambda\subset \cC(\Omega)$ be a hierarchy of finite-dimensional subspaces of the space of continuous functions $\cC(\Omega)$. Let $\Theta=\langle \theta_1, \ldots, \theta_n \rangle$ such that $\theta:\Omega \to \bR^{n}$ is an extremal curve. Then $\Lambda$ is spanned by an extremal curve as well.
\end{lemma}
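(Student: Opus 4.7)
The plan is to exhibit an explicit extremal curve for $\Lambda$ by extending a basis of $\Theta$. Since $\Theta \subset \Lambda$ are finite-dimensional, I extend the basis $\{\theta_1,\ldots,\theta_n\}$ of $\Theta$ to a basis $\{\theta_1,\ldots,\theta_n,\psi_1,\ldots,\psi_k\}$ of $\Lambda$, and let $\varphi := (\theta_1,\ldots,\theta_n,\psi_1,\ldots,\psi_k):\Omega \to \bR^{n+k}$. Since extremality is a subspace property (as noted just before the lemma), it suffices to show that this particular $\varphi$ is an extremal moment curve. Continuity of $\varphi$ follows componentwise; letting $\pi : \bR^{n+k}\to \bR^n$ denote projection onto the first $n$ coordinates we have $\theta = \pi\circ \varphi$, so injectivity of $\varphi$ is inherited from injectivity of $\theta$, and $\varphi^{-1} = \theta^{-1}\circ \pi$ is continuous on $\varphi(\Omega)$. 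Hence $\varphi$ is a lifting map in the sense of Def.~\ref{def:lifting_map}.

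The next step is to show that every $\varphi(x_0)\in \varphi(\Omega)$ is extreme in $\con\varphi(\Omega)$. Suppose $\varphi(x_0) = (1-\alpha) y + \alpha z$ for some $y,z\in \con\varphi(\Omega)$ and $\alpha\in(0,1)$. Applying $\pi$ and using $\pi(\con\varphi(\Omega)) = \con\pi(\varphi(\Omega)) = \con\theta(\Omega)$, I obtain $\theta(x_0) = (1-\alpha)\pi(y) + \alpha\pi(z)$ with $\pi(y),\pi(z)\in \con\theta(\Omega)$. Since $\theta$ is extremal, $\theta(x_0)$ is an extreme point of $\con\theta(\Omega)$, forcing $\pi(y) = \pi(z) = \theta(x_0)$.

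To translate this coordinate-wise identity into $y = z = \varphi(x_0)$, I use Carath\'eodory's theorem to represent $y = \sum_i \lambda_i \varphi(x_i)$ with $\lambda_i>0$, $\sum_i \lambda_i = 1$, and $x_i\in \Omega$. Then $\theta(x_0) = \pi(y) = \sum_i \lambda_i \theta(x_i)$ exhibits the extreme point $\theta(x_0)$ as a finite convex combination of elements of $\con\theta(\Omega)$. The standard fact that an extreme point coincides with each summand of positive weight in any such decomposition (obtained by grouping all but one summand and applying the two-point definition of extremality inductively) yields $\theta(x_i) = \theta(x_0)$ for every $i$. Injectivity of $\theta$ then gives $x_i = x_0$, and hence $y = \varphi(x_0)$. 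The identical argument applied to $z$ produces $z = \varphi(x_0)$, so $\varphi(x_0) = y = z$, establishing that $\varphi(x_0)$ is extreme.

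The main obstacle is the final step: the nontrivial extremal curve $\theta$ only lives in the first $n$ coordinates, so a priori the projection $\pi$ could collapse distinct convex representations of $\varphi(x_0)$. The resolution is the promotion of the two-point extremality condition to an arbitrary finite convex combination, together with the injectivity of the lifting map $\theta$, which together force every point appearing in a Carath\'eodory representation of $y$ to coincide with $x_0$ in the base space $\Omega$. Once this is in hand, the extra coordinates $\psi_1(x_i),\ldots,\psi_k(x_i)$ are automatically fixed, and the lifted representation is uniquely $\varphi(x_0)$.
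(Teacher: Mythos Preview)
Your proof is correct and follows essentially the same route as the paper: extend the basis of $\Theta$ to one of $\Lambda$, project a putative nontrivial convex decomposition of $\varphi(x_0)$ onto the first $n$ coordinates, and use extremality plus injectivity of $\theta$ on the Carath\'eodory representatives to force $y=z=\varphi(x_0)$. You are in fact slightly more careful than the paper in that you explicitly verify $\varphi$ is a lifting map (Def.~\ref{def:lifting_map}) and spell out the promotion from two-point extremality to arbitrary finite convex combinations, both of which the paper leaves implicit.
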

\begin{proof}
$\{ \theta_1, \ldots, \theta_n \}$ is a basis of $\Theta \subset \Lambda$ and therefore linearly independent. Since $\Lambda$ is finite-dimensional in view of the basis extension theorem $\{\theta_1, \ldots, \theta_n \}$ can be extended to a basis $\varphi=(\theta_1, \ldots, \theta_n, \psi_{1},\ldots,\psi_{k})$ of $\Lambda$ with vectors $\psi_i \in G$, where $\spn G= \Lambda$ and $|G| <\infty$ such that $\spn \varphi = \Lambda$.

Now choose $y \in \Omega$ and consider $\varphi(y)$. Let $\alpha \in (0,1)$ and $\varphi(y) = \alpha x + (1-\alpha)z$ for $x,z\in \con \varphi(\Omega) \subset \bR^{n+k}$. Due to Carath\'eodory \cite[Thm. 2.29]{RoWe98} there exist coefficients $\alpha_{i}$, $\beta_l > 0$ such that $x = \sum_{i=1}^{n+k+1} \alpha_i \varphi(x^{(i)})$ and $z = \sum_{l=1}^{n+k+1} \beta_l \varphi(z^{(l)})$, $z^{(l)},x^{(i)}\in \Omega$ with $\sum_{i=1}^{n+k+1} \alpha_i =1$, $\sum_{l=1}^{n+k+1} \beta_l = 1$.

This implies that $\theta(y)=\alpha \sum_{i=1}^{n+k+1} \alpha_i \theta(x^{(i)})+ (1-\alpha) \sum_{l=1}^{n+k+1} \beta_l \theta(z^{(l)})$. Extremality of $\theta$ implies that $\theta(y)=\sum_{i=1}^{n+k+1} \alpha_i \theta(x^{(i)})=\sum_{l=1}^{n+k+1} \beta_l \theta(z^{(l)})$ and therefore $\theta(y)=\theta(x^{(i)})=\theta(z^{(l)})$. Since $\theta$ is an extremal curve it is injective and therefore $y=z^{(l)}=x^{(i)}$. This implies that $\varphi(y)=x = z$.
\end{proof}
	
\begin{lemma}[extremality of quadratic subspace]
Let $\emptyset\neq \Omega \subset \bR^m$ be compact. Let $\varphi(x)=(x_1, x_2, \ldots, x_m, \|x\|^2)$. Assume that $\langle \varphi_1, \ldots, \varphi_{m+1} \rangle \subset \Lambda \subset \cC(\Omega)$. Then $\Lambda$ is spanned by an extremal curve.
\end{lemma}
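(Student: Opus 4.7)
The plan is to reduce the claim to Lemma \ref{lem:extremal_subspace} by showing that the quadratic lifting $\varphi(x)=(x_1,\ldots,x_m,\|x\|^2)$ itself is an extremal moment curve, so that taking $\Theta = \langle \varphi_1,\ldots,\varphi_{m+1}\rangle \subset \Lambda$ (which is precisely the hypothesis in the lemma) immediately yields the conclusion. Thus the entire content of the proof is verifying Definition \ref{def:extremal_curve} for this particular $\varphi$.

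First I would verify that $\varphi$ is a lifting map in the sense of Definition \ref{def:lifting_map}. Continuity on the compact set $\Omega$ is clear, injectivity follows because the first $m$ components already recover $x$, and the inverse $\varphi^{-1}(y_1,\ldots,y_{m+1}) = (y_1,\ldots,y_m)$ is the restriction of a linear projection, hence continuous. This is essentially a bookkeeping step.

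The main step is extremality. Suppose $\varphi(x) = \alpha u + (1-\alpha) v$ with $u,v \in \con \varphi(\Omega)$ and $\alpha \in (0,1)$. Writing $u$ and $v$ as convex combinations of points in $\varphi(\Omega)$ and collapsing coefficients, we get points $p_1,\ldots,p_N \in \Omega$ and weights $w_k \geq 0$ with $\sum_k w_k = 1$ such that
\begin{equation}
\sum_k w_k \begin{pmatrix} p_k \\ \|p_k\|^2 \end{pmatrix} = \begin{pmatrix} x \\ \|x\|^2 \end{pmatrix}.
\end{equation}
The first $m$ components give $\sum_k w_k p_k = x$; the last component gives $\sum_k w_k \|p_k\|^2 = \|x\|^2$. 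Combining these with strict convexity of $\|\cdot\|^2$ and Jensen's inequality,
\begin{equation}
\Bigl\| \sum_k w_k p_k \Bigr\|^2 \leq \sum_k w_k \|p_k\|^2,
\end{equation}
with equality if and only if all $p_k$ with $w_k > 0$ coincide. Since both sides equal $\|x\|^2$, we obtain $p_k = x$ for every $k$ in the support of $w$, and therefore $u = v = \varphi(x)$. This establishes that $\varphi(x)$ is an extreme point of $\con \varphi(\Omega)$, so $\varphi$ is an extremal moment curve.

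Finally, apply Lemma \ref{lem:extremal_subspace} with $\Theta = \langle \varphi_1,\ldots,\varphi_{m+1} \rangle$: by hypothesis $\Theta \subset \Lambda \subset \cC(\Omega)$, and we have just shown that $\Theta$ is spanned by the extremal curve $\varphi$. The lemma then yields that $\Lambda$ itself is spanned by an extremal curve. I expect no real obstacle beyond writing the Jensen step cleanly; the only subtlety is ensuring that extremality is tested against arbitrary convex combinations, which the collapsed-weights reformulation handles in one shot.
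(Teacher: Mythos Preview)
Your proposal is correct and follows essentially the same route as the paper: show that the quadratic lifting $\varphi$ is itself an extremal curve, then invoke Lemma~\ref{lem:extremal_subspace}. The only cosmetic difference is in the extremality step: the paper tests against the affine-in-$\varphi$ function $f(p)=\|p-x\|^2=\langle\varphi(p),(-2x,1)\rangle+\|x\|^2$ and observes that $0=f(x)=\sum_k w_k f(p_k)$ with each summand nonnegative forces $p_k=x$, whereas you phrase the same computation as equality in Jensen's inequality for the strictly convex $\|\cdot\|^2$. Since the Jensen gap $\sum_k w_k\|p_k\|^2-\|\sum_k w_k p_k\|^2$ equals $\sum_k w_k\|p_k-x\|^2$, the two arguments are literally the same identity read in two directions; your additional explicit check that $\varphi$ is a lifting map is a small bonus of care that the paper leaves implicit.
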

\begin{proof}
Choose $y \in \Omega$. Let $\alpha \in (0,1)$ and $\varphi(y) = \alpha x + (1-\alpha)z$ for $x,z\in \con \varphi(\Omega) \subset \bR^{m+1}$. Due to Carath\'eodory \cite[Thm. 2.29]{RoWe98} there exist coefficients $\alpha_{i}$, $\beta_k > 0$ such that $x = \sum_{i=1}^{m+2} \alpha_i \varphi(x^{(i)})$ and $z = \sum_{k=1}^{m+2} \beta_k \varphi(z^{(k)})$, $z^{(k)},x^{(i)}\in \Omega$ with $\sum_{i=1}^{m+2} \alpha_i =1$, $\sum_{k=1}^{m+2} \beta_k = 1$. 

Now choose $f(x)=\|x-y\|^2=\|x\|^2 - 2\langle y, x\rangle + \|y\|^2$. Then we have for $a = (-2 y_1, \ldots, -2y_m, 1)$ and $a_0 = \|y\|^2$:
\begin{align*}
0 = f(y) = \langle \varphi(y), a \rangle +a_0 &= \left\langle \alpha \sum_{i=1}^{m+2} \alpha_i \varphi(x^{(i)}) + (1-\alpha) \sum_{k=1}^{m+2} \beta_k \varphi(z^{(k)}), a \right\rangle +a_0 \\
&= \sum_{i=1}^{m+2} \alpha \cdot \alpha_i \cdot \langle \varphi( x^{(i)}), a \rangle + \sum_{k=1}^{m+2} ( 1 - \alpha) \cdot \beta_k \cdot \langle \varphi(z^{(k)}), a \rangle +a_0\\
&= \sum_{i=1}^{m+2} \alpha \cdot \alpha_i \cdot f(x^{(i)}) + \sum_{k=1}^{m+2} ( 1 - \alpha ) \cdot \beta_k \cdot f( z^{(k)})
\end{align*}
As $f(x^{(i)})>0$ for $x^{(i)} \neq y$ and $\alpha >0$ it holds that $x^{(i)} \neq y$ implies $\alpha_{i} = 0$. The same is true for $z^{(k)}$ and $\beta_{k}$. Hence $x = \varphi(y) = z$, and therefore $\varphi : \Omega \to \bR^{m+1}$ is an extremal curve. In view of \cref{lem:extremal_subspace} $\Lambda$ is spanned by an extremal curve.
\end{proof}
This shows that in a piecewise polynomial discretization with degree at least $2$ the corresponding basis inherits the extreme point property from the extremality of the subspace of quadratic functions.
\begin{figure}[t!]
\begin{subfigure}[b]{0.325\linewidth}
\centering
\hspace{1cm}
\includegraphics[width=\linewidth]{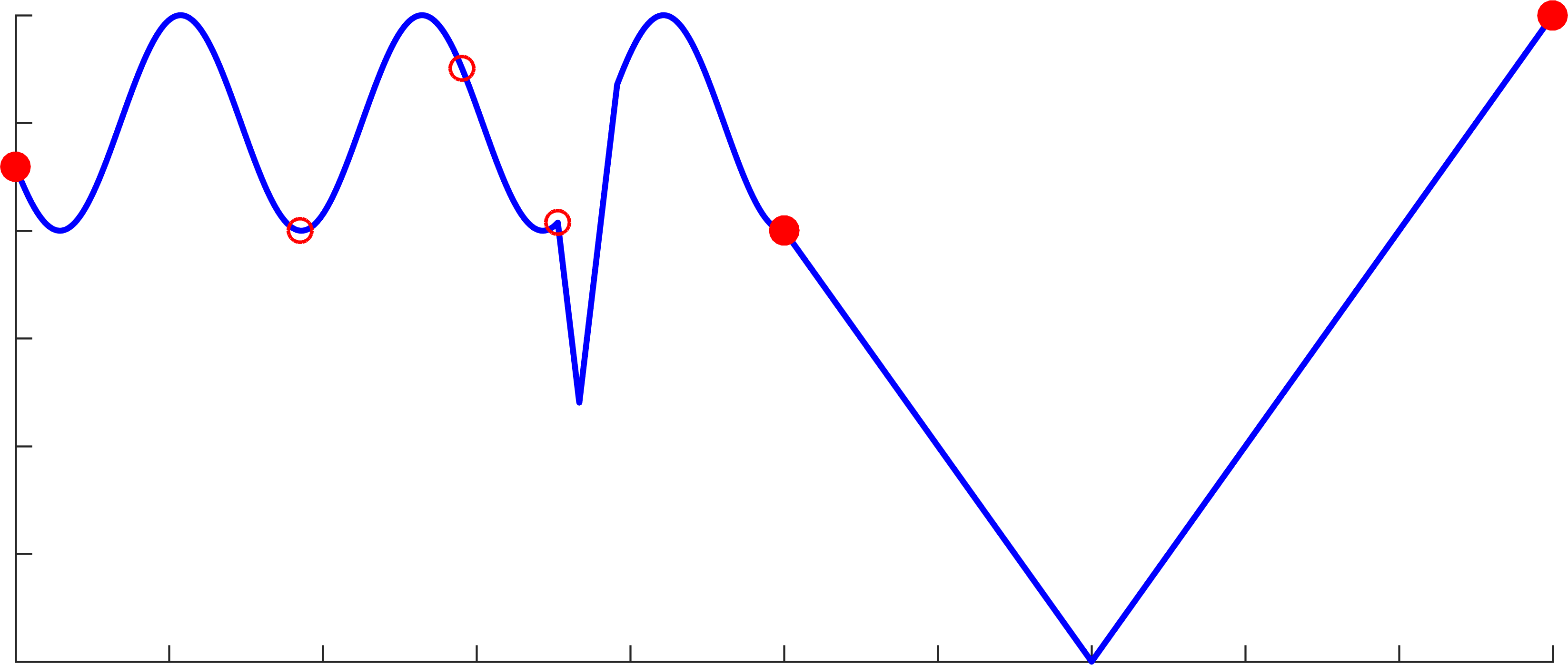}
\end{subfigure}		
\hfill
\begin{subfigure}[b]{0.325\linewidth}
\centering
\hspace{1cm}
\includegraphics[width=\linewidth]{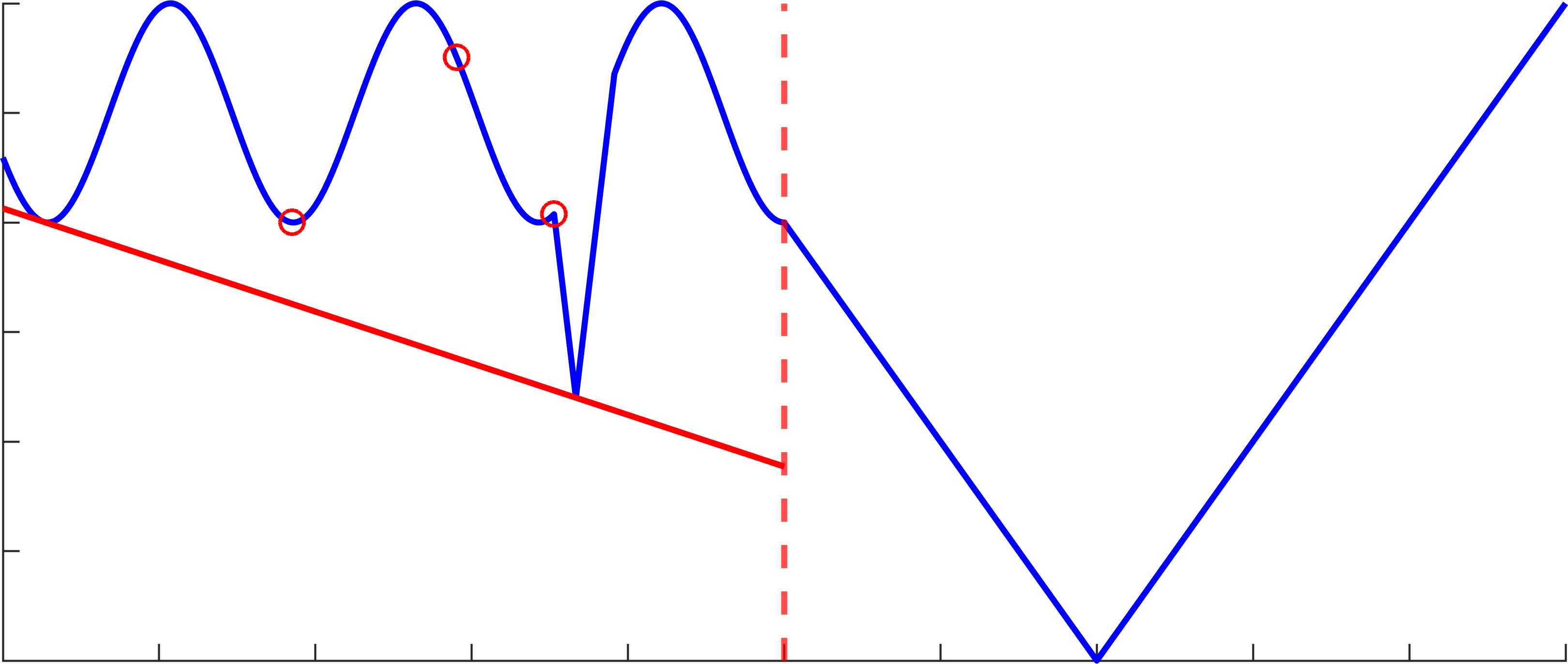}
\end{subfigure}
\hfill
\begin{subfigure}[b]{0.325\linewidth}
\centering
\includegraphics[width=\linewidth]{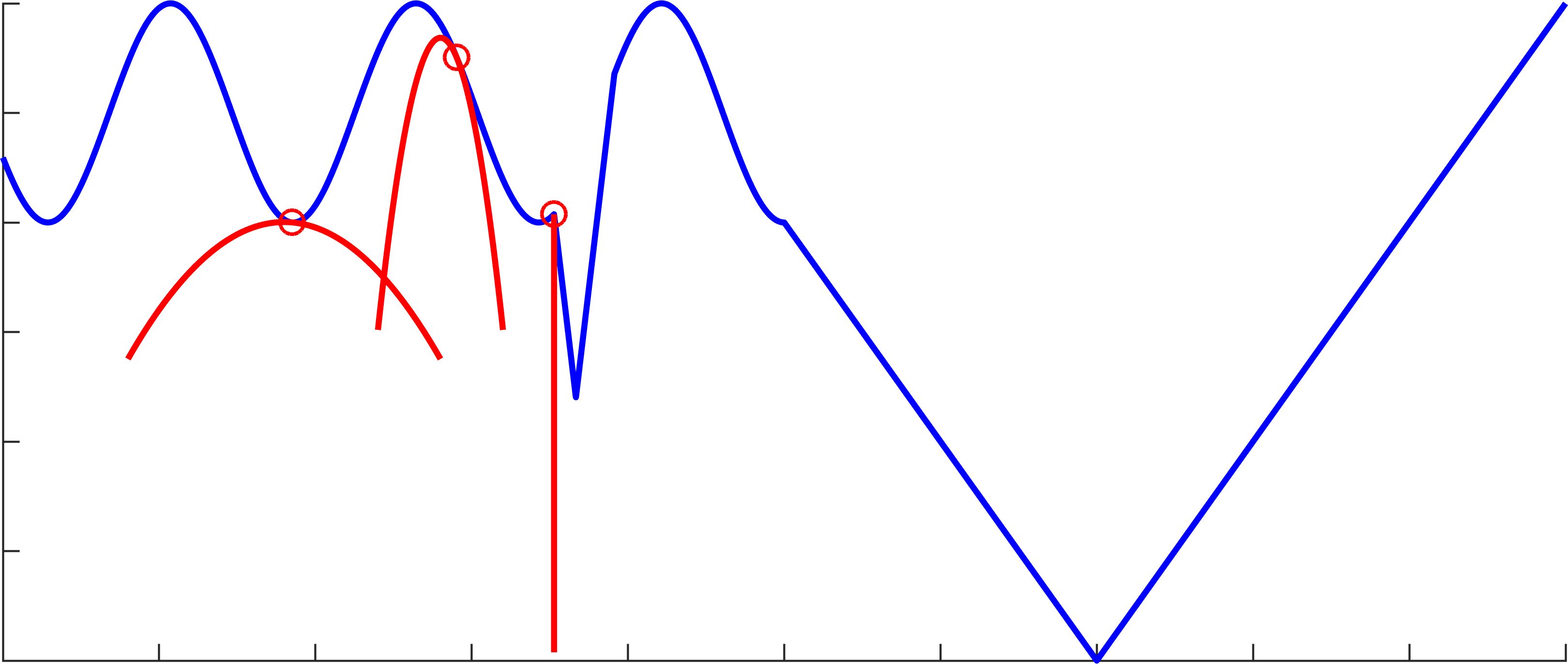}
\end{subfigure}\\
\begin{subfigure}[b]{0.325\linewidth}
\centering
\includegraphics[width=\linewidth]{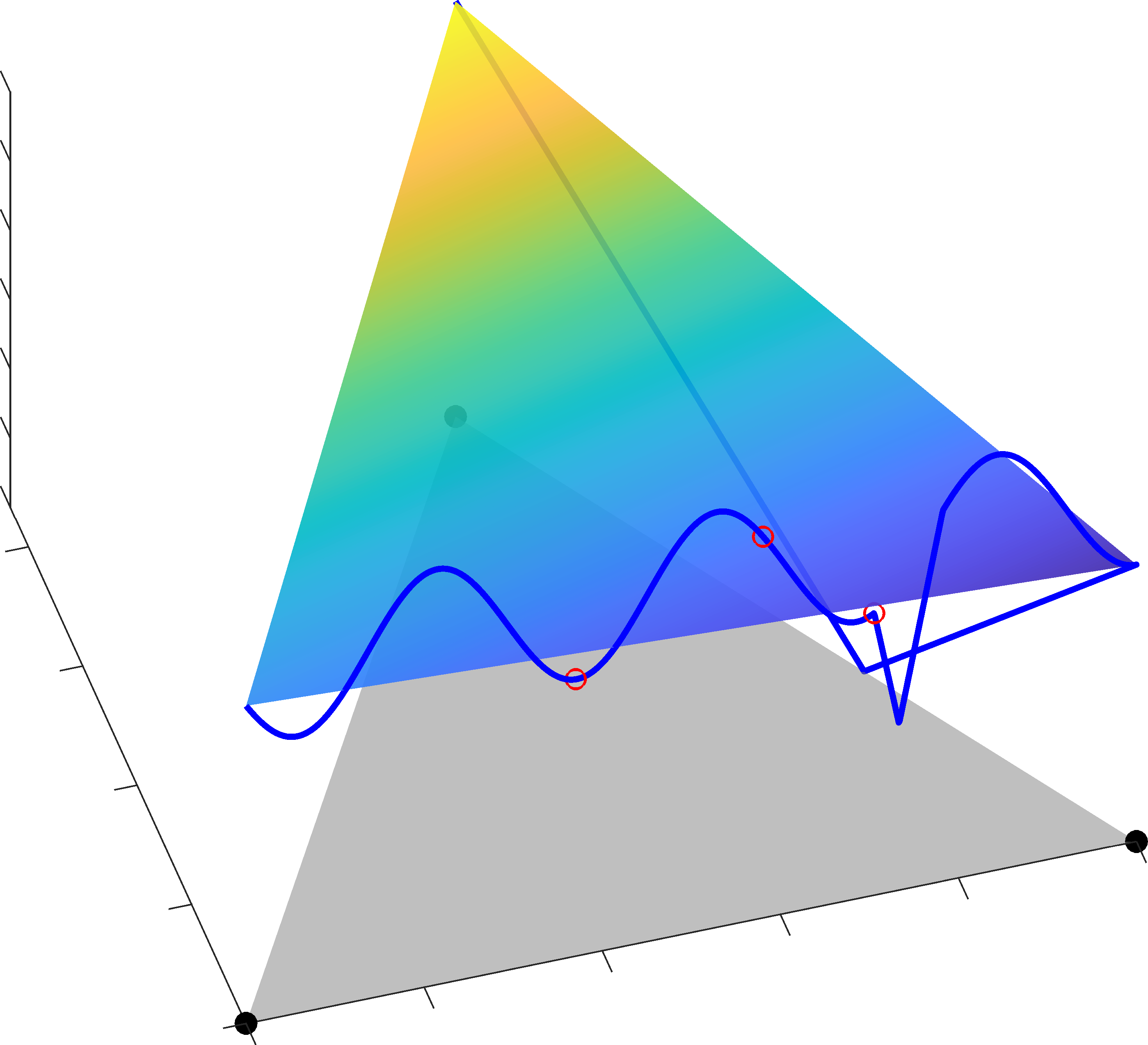}
\caption{Standard Lifting}
\end{subfigure}
\hfill
\begin{subfigure}[b]{0.325\linewidth}
\centering
\includegraphics[width=\linewidth]{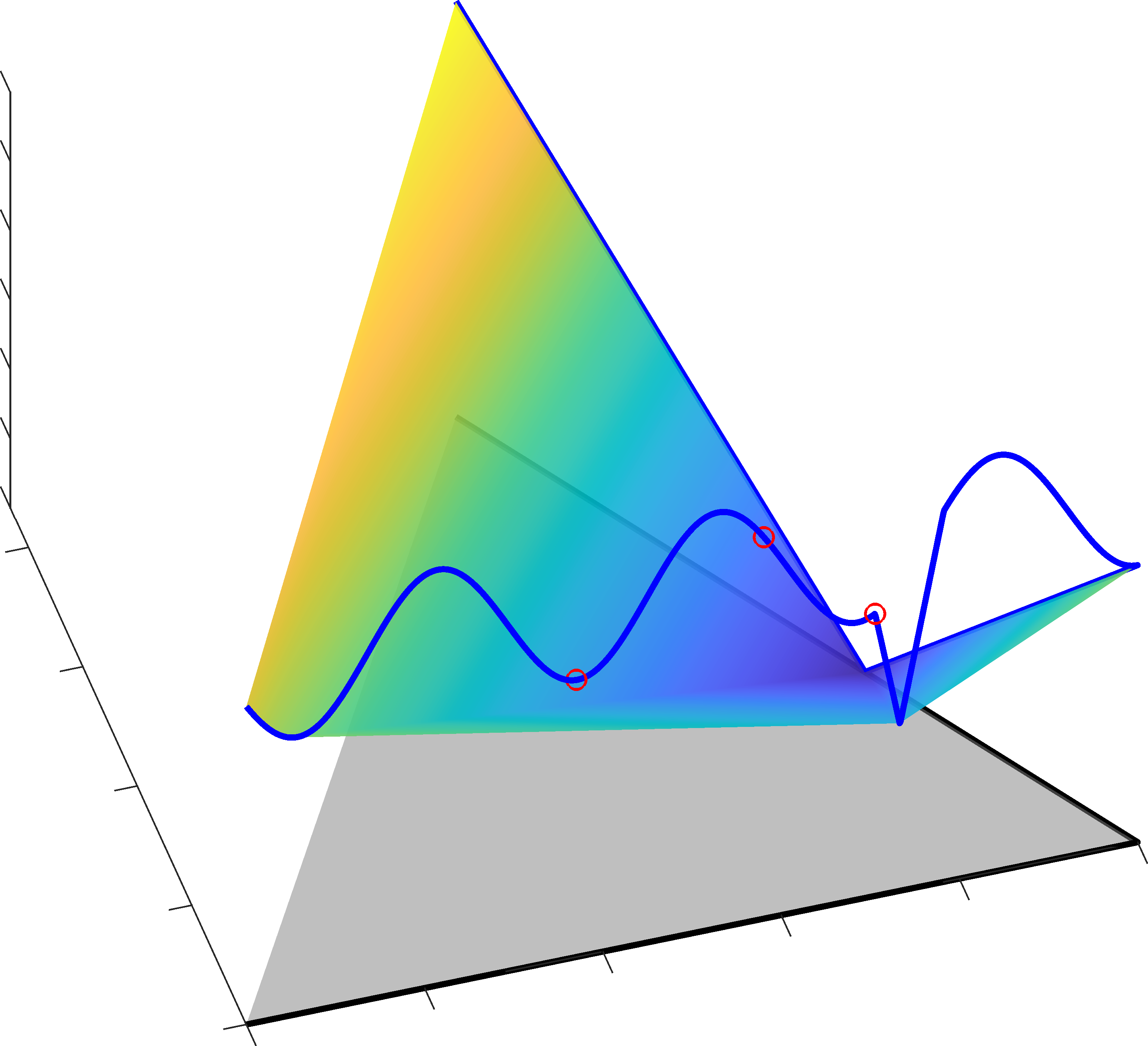}
\caption{Sublabel Lifting \cite{laude16eccv}}
\end{subfigure}
\hfill
\begin{subfigure}[b]{0.325\linewidth}
\centering
\includegraphics[width=\linewidth]{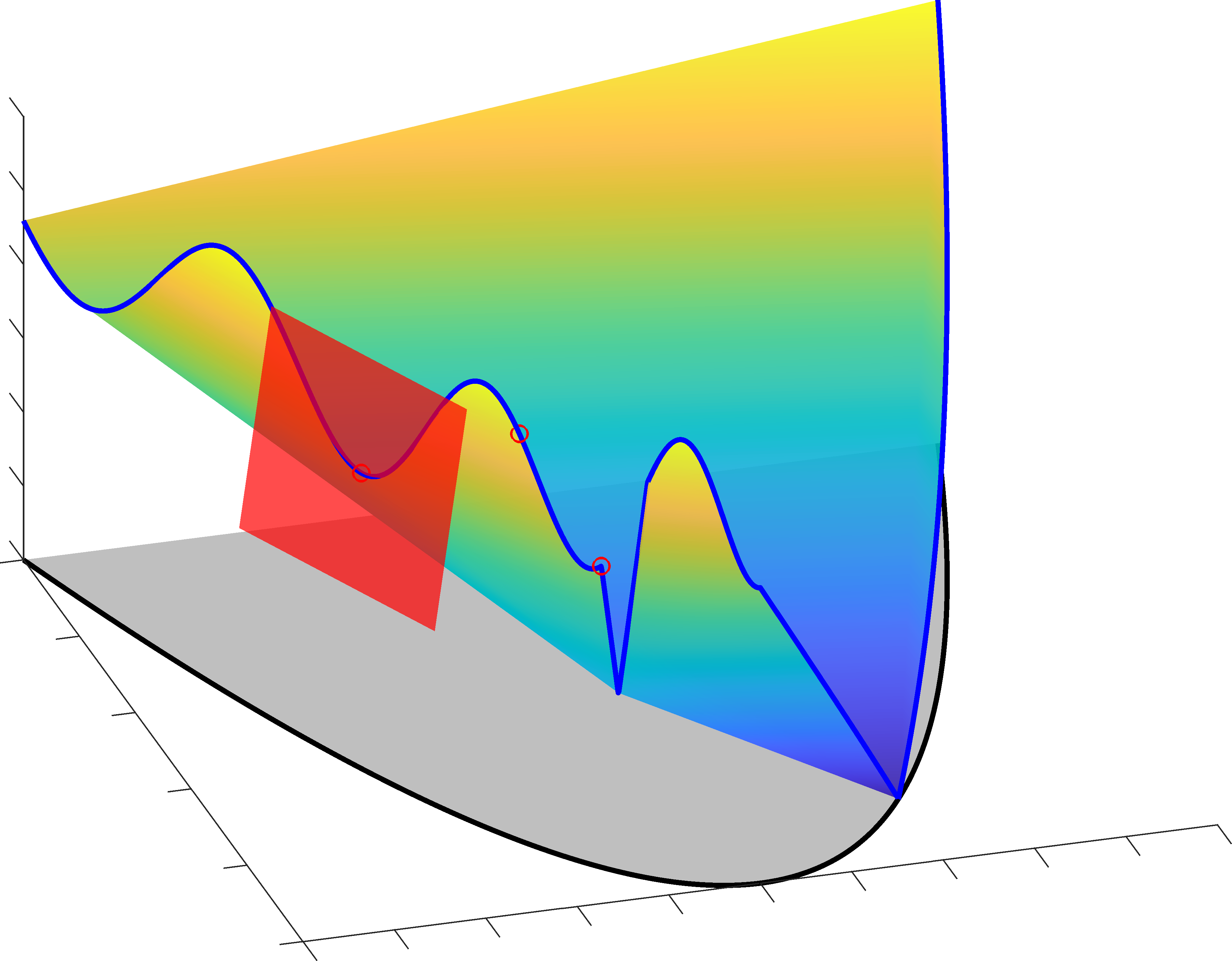}
\caption{Polynomial Lifting (Ours)}
\end{subfigure}
\caption{Lifted version $\lifted{f}{\Lambda}$ of the function $f$ for 3 different choices of $\varphi$. In the top row the same nonconvex function $f$ (blue curves) is depicted. The colored surfaces in the bottom row correspond to the lifted biconjugates $\lifted{f}{\Lambda}^{**}$, where the gray shadow areas correspond to their domains $\dom \lifted{f}{\Lambda}^{**} = \bbP$ and the blue curves correspond to the lifted cost $f_\Lambda$, see \cref{eq:lifted_cost}. The black curves in the bottom row correspond to the moment curve described by Diracs $\varphi(\Omega)$, i.e., the domain of $\lifted{f}{\Lambda}$. From left to right, cf. \cref{ex:sampling}, \cref{ex:piecewise_linear} and \cref{ex:polynomial}. The nonlinear supporting dual functions $q_{\lambda,\beta}(x) = \langle \varphi (x), \lambda\rangle-\beta$ (red curves) to $f$ in the top row (middle and right), are transformed into linear supporting hyperplanes $l_{\lambda,\beta}(y)=\langle y, \lambda \rangle - \beta$ (red surfaces) to $\lifted{f}{\Lambda}^{**}$ in the
bottom row through the feature map $\varphi$. In the language of kernel methods, these functions can be interpreted as the nonlinear decision boundaries that separate individual points on the graph of $f$ from the epigraph of $f$. Only in the most right case such a separation is possible. As a result the polynomial lifting preserves the nonconvex cost function $\lifted{f}{\Lambda}^{**} \circ \varphi = f$ on $\varphi(\Omega)$ whereas the $2$-sparse lifting (middle) only leads to a piecewise convex under-approximation $\lifted{f}{\Lambda}^{**} \circ \varphi \leq f$.
}
\label{fig:envelope}
\end{figure}
Extremal curves are key to preserve the cost function when restricted to the set of discretized Diracs $\varphi(x)$:
\begin{theorem}\label{thm:general_binconj}
Let $\Omega \subset \bR^m$ be nonempty and compact and let $f:\Omega \to \bR$ be lsc. Furthermore, let $\varphi:\Omega \to \bR^n$ be an extremal curve.
Then we have
\begin{align}
f_\Lambda^{**} \circ \varphi = f
\end{align}
on $\Omega$.
In addition, we have that $\con f_\Lambda = f_\Lambda^{**}$.
\end{theorem}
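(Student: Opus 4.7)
The plan is to prove the pointwise identity $f_\Lambda^{**}\circ \varphi = f$ first, and then derive $\con f_\Lambda = f_\Lambda^{**}$ as a separate lower semicontinuity argument. Throughout I will use that $f_\Lambda$ admits a constant affine minorant, since the lsc function $f$ attains its infimum on the compact set $\Omega$; this guarantees $f_\Lambda^{**}$ is proper and that $\operatorname{epi} f_\Lambda^{**}$ is the closed convex hull of $\operatorname{epi} f_\Lambda$ in $\bR^n \times \bR$.

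For the first assertion the inequality $f_\Lambda^{**}(\varphi(x^*)) \leq f(x^*)$ is immediate from $f_\Lambda^{**}\leq f_\Lambda$. For the reverse I argue by contradiction: assume some $t < f(x^*)$ satisfies $(\varphi(x^*),t) \in \overline{\operatorname{conv}(\operatorname{epi} f_\Lambda)}$. Picking a sequence $(y_k,s_k)\to (\varphi(x^*),t)$ in $\operatorname{conv}(\operatorname{epi} f_\Lambda)$ and invoking Carath\'eodory in $\bR^{n+1}$, I write $(y_k,s_k) = \sum_{i=1}^{n+2} \lambda_{i,k}(\varphi(x_{i,k}), t_{i,k})$ with $x_{i,k}\in\Omega$ and $t_{i,k}\geq f(x_{i,k})$. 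Compactness of $\Omega$ and of the simplex allows passing to a subsequence with $\lambda_{i,k}\to \lambda_i^*$ and $x_{i,k}\to x_i^*$, so continuity of $\varphi$ yields $\sum_i \lambda_i^* \varphi(x_i^*) = \varphi(x^*)$. Extremality of $\varphi(x^*)$ in $\con \varphi(\Omega)$ then forces $\varphi(x_i^*) = \varphi(x^*)$ whenever $\lambda_i^*>0$, and injectivity of $\varphi$ upgrades this to $x_i^* = x^*$. A Fatou-style liminf argument on the nonnegative quantities $\lambda_{i,k}(t_{i,k}-\inf_\Omega f)$, combining $t_{i,k}\geq f(x_{i,k})$ with lsc of $f$, yields $t = \lim_k s_k \geq f(x^*)$, contradicting $t < f(x^*)$.

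For the second assertion I will show that $\con f_\Lambda$ is already lower semicontinuous on $\bR^n$, from which $\con f_\Lambda = f_\Lambda^{**}$ follows since the biconjugate coincides with the lsc hull of the convex envelope. After a harmless additive shift making $f\geq 0$, Carath\'eodory gives $\con f_\Lambda(y) = \inf \{\sum_{i=1}^{n+1}\lambda_i f(x_i) : \sum_i \lambda_i \varphi(x_i) = y\}$ over the compact fiber in $\Delta_{n+1}\times\Omega^{n+1}$; this infimum is attained because the objective is a nonnegative lsc function on a compact set. For a convergent sequence $y_k\to y$ in $\con\varphi(\Omega)$, selecting attainers and extracting a convergent subsequence produces a limiting representation of $y$, and lsc of each product $(\lambda_i, x_i)\mapsto \lambda_i f(x_i)$ gives $\liminf_k \con f_\Lambda(y_k) \geq \con f_\Lambda(y)$; outside the compact set $\con \varphi(\Omega)$ the function is $+\infty$, so lsc there is automatic.

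The main obstacle lies in the limiting step of the first part: since $\operatorname{epi} f_\Lambda$ is parametrized by all of the compact $\Omega$ and the heights $t_{i,k}\geq f(x_{i,k})$ need not be uniformly bounded, one must split indices into those with $\lambda_i^*>0$, where extremality triggers the collapse $x_i^*=x^*$, and those with $\lambda_i^*=0$, where the product $\lambda_{i,k} t_{i,k}$ can only be controlled from below using the uniform lower bound $f\geq \inf_\Omega f$. Without the extremality hypothesis on $\varphi$ this collapse fails, which is exactly why liftings such as the piecewise linear one only produce under-approximations along the moment curve.
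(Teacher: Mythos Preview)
Your argument is correct, but the paper proceeds in a different order that sidesteps the most delicate step of your proof. The paper first shows $\con f_\Lambda(\varphi(x)) = f(x)$, which is immediate from extremality: in the Carath\'eodory representation of $\con f_\Lambda(\varphi(x))$ the points $y^{(i)}$ must lie in $\dom f_\Lambda = \varphi(\Omega)$, and since $\varphi(x)$ is extreme in $\con\varphi(\Omega)$ the only admissible convex combination is the trivial one, giving $\con f_\Lambda(\varphi(x)) = f_\Lambda(\varphi(x)) = f(x)$ with no closure or limiting involved. Only afterwards does the paper establish $\con f_\Lambda = f_\Lambda^{**}$, by proving that $f_\Lambda$ itself is lsc (using that $\varphi$ is a homeomorphism onto its image) and then invoking \cite[Cor.~3.47]{RoWe98} and \cite[Thm.~11.1]{RoWe98}. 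The identity $f_\Lambda^{**}\circ\varphi = f$ then follows by composition.

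By contrast, you attack $f_\Lambda^{**}\circ\varphi = f$ directly through $\operatorname{epi} f_\Lambda^{**} = \overline{\operatorname{conv}(\operatorname{epi} f_\Lambda)}$, which forces you to handle the closure via a subsequence extraction and the Fatou-type splitting into indices with $\lambda_i^*>0$ versus $\lambda_i^*=0$. This is correct but is precisely the complication the paper avoids by routing through $\con f_\Lambda$ first. Your hands-on lsc proof for $\con f_\Lambda$ in the second part is essentially the content behind the cited corollary, so your overall argument is more self-contained, at the cost of doing the limiting work twice (once in each part) rather than once.
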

\begin{proof}
Since $f$ is lsc and $\Omega$ compact $f$ is bounded from below, i.e., there is $\gamma > -\infty$ so that $f(x)\geq \gamma$ for all $x \in \Omega$. We have $\dom f_\Lambda = \varphi(\Omega) \subset \bR^n$ and 
in view of \cite[Prop. 2.31]{RoWe98} it holds for any $y \in \bR^{n}$,
\begin{align*}
(\con f_\Lambda)(y) &= \inf\left\{ \sum_{i=1}^{n+1} \lambda_if_\Lambda(y^{(i)}) : \lambda_i \geq 0, \, \sum_{i=1}^{n+1} \lambda_i = 1, \, y = \sum_{i=1}^{n+1} \lambda_i y^{(i)},\, y^{(i)} \in \bR^n  \right\} \\
&= \inf\left\{ \sum_{i=1}^{n+1} \lambda_i f(x^{(i)}) : \lambda_i \geq 0, \, \sum_{i=1}^{n+1} \lambda_i = 1, \, y = \sum_{i=1}^{n+1} \lambda_i\varphi(x^{(i)}),\, x^{(i)} \in \Omega  \right\} \geq \gamma.
\end{align*}
Let $x \in \Omega$. Since the only possible convex combination of the extreme point $\varphi(x)$ from points $y^{(i)} \in \varphi(\Omega)$ is $\varphi(x)$ itself, we have
\begin{align*}
(\con f_\Lambda)(\varphi(x)) &= \inf\left\{ \sum_{i=1}^{n+1} \lambda_if_\Lambda(y^{(i)}) : \lambda_i \geq 0, \, \sum_{i=1}^{n+1} \lambda_i = 1, \,\varphi(x) = \sum_{i=1}^{n+1} \lambda_i y^{(i)},\, y^{(i)} \in \varphi(\Omega)  \right\} \\
&=f_\Lambda(\varphi(x)) =f(x).
\end{align*}
This shows that $\con f_\Lambda \circ \varphi = f$.
Since $\con f_\Lambda$ is bounded from below $\con f_\Lambda$ is proper.

Let $f$ be lsc. 

Then $f_\Lambda$ inherits its lower semicontinuity from $f$:

Assume that $y \in \varphi(\Omega)$. Then there exists $x \in \Omega$ with $y = \varphi(x)$ and we have due to the continuity of $\varphi$ and $\varphi^{-1}$:
\begin{align*}
\liminf_{y' \to \varphi(x)}~f_\Lambda(y')&:= \lim_{\delta \to 0^+} \inf ~\{f_\Lambda(y') : y' \in B_\delta(\varphi(x))\} \\
&=\lim_{\delta \to 0^+} \inf~ \{f_\Lambda(\varphi(x')) : \varphi(x') \in B_\delta(\varphi(x)) \}
=\lim_{\epsilon \to 0^+} \inf~ \{ f(x') : x' \in B_\epsilon(x) \} \\
&= \liminf_{x' \to x}~f(x')\geq f(x) =f_\Lambda(y).
\end{align*}	
Since $\Omega$ is compact and $\varphi$ continuous the image $\varphi(\Omega) = \dom f_\Lambda$ is compact as well. Since $f_\Lambda$ is also bounded from below it is coercive in the sense of \cite[Def. 3.25]{RoWe98} (also called super-coercive in other literature).
Then we can invoke \cite[Cor. 3.47]{RoWe98} and deduce that $\con f_\Lambda$ is proper, lsc and convex. In view of \cite[Thm. 11.1]{RoWe98} we have $\con f_\Lambda = \cl \con f_\Lambda = f_\Lambda^{**}$.
\end{proof}
For a geometric intuition of this theorem we refer to \cref{fig:envelope}. 

For a piecewise polynomial discretization with degree at least $2$, due to extremality, the primal discretized energy $\mathcal{F}_\Lambda$ restricted to $\varphi(\Omega)^\cV$ agrees with the original energy $F$. In particular, this implies that an obtained Dirac solution $(\varphi(x_u^*))_{u \in \cV}$ of the discretization corresponds to a solution of the original problem in the same way integer solutions of LP relaxations are certificates of optimality for the corresponding ILP. 
\begin{proposition}\label{prop:tightness}
Let $\emptyset \neq \Omega \subset \bR^m$.
Let the metric $d$ be induced by a norm and assume the space of linear functions on $\Omega$ is contained in $\Lambda$. Furthermore, assume $\Lambda$ is spanned by an extremal
curve $\varphi:\Omega \to \bR^n$ and $f_u$ lsc. Then for any $y \in (\bR^n)^\cV$, with $y_u =\varphi(x_u)$, $x_u \in \Omega$ the following identity holds true:
$$
\mathcal{F}_\Lambda(y) = F(x).
$$
In particular, whenever $y^*$ is a solution of problem \cref{eq:discrete_primal} such that $y^*\in (\bR^n)^\cV$, with $y_u^* =\varphi(x_u^*)$, for some $x_u^* \in \Omega$, $x^*$ is a solution of the original problem \cref{eq:mrf}.
\end{proposition}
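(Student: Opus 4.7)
The plan is to verify the pointwise identity $\mathcal{F}_\Lambda(y) = F(x)$ at lifted Dirac points term by term, treating the unary and the pairwise contributions separately, and then to deduce the optimality statement by a simple sandwich argument using the chain of relaxations already established.

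First I would handle the unary part. Since $\varphi$ is an extremal curve and each $f_u$ is lsc on the compact set $\Omega$, Thm.~\ref{thm:general_binconj} applies directly: $(f_u)_\Lambda^{**}(\varphi(x_u)) = f_u(x_u)$. Summing over $u \in \cV$ recovers $\sum_{u \in \cV} f_u(x_u)$ exactly, so the unary contribution of $\mathcal{F}_\Lambda(y)$ matches that of $F(x)$ with no further work.

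Next I would address the pairwise contribution, which is the delicate step. Writing out the support function and using $y_u = \varphi(x_u)$,
\begin{align*}
\sigma_{\mathcal{K}_\Lambda}\bigl((\nabla_\Lambda y)_e\bigr) = \sup_{p \in \mathcal{K}_\Lambda} \langle p, \varphi(x_u) - \varphi(x_v)\rangle = \sup_{\lambda \in \Lambda \cap \Lip_d(\Omega)} \bigl(\lambda(x_u) - \lambda(x_v)\bigr).
\end{align*}
The inequality $\leq d(x_u, x_v)$ is immediate from the 1-Lipschitz property. For the matching lower bound I would exploit the assumption that $d$ is induced by a norm $\lVert \cdot \rVert$: by norm duality (a Hahn--Banach-type argument) there exists $v \in \bR^m$ with $\lVert v \rVert_* \leq 1$ and $\langle v, x_u - x_v \rangle = \lVert x_u - x_v \rVert = d(x_u, x_v)$. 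The linear function $\lambda^*(x) := \langle v, x\rangle$ is then 1-Lipschitz with respect to $d$ and, by the hypothesis that the space of linear functions is contained in $\Lambda$, lies in $\Lambda \cap \Lip_d(\Omega)$. Since $\lambda^*(x_u) - \lambda^*(x_v) = d(x_u, x_v)$, the supremum is attained, so the pairwise term equals $d(x_u, x_v) = f_{uv}(x_u, x_v)$. Summing over edges and combining with the unary step yields $\mathcal{F}_\Lambda(y) = F(x)$.

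For the final claim, suppose $y^*$ is a minimizer of \eqref{eq:discrete_primal} with $y_u^* = \varphi(x_u^*)$. The chain $\eqref{eq:discrete_primal} \leq \eqref{eq:opt_relax} \leq \eqref{eq:mrf}$ combined with the just-established identity gives $F(x^*) = \mathcal{F}_\Lambda(y^*) = \eqref{eq:discrete_primal} \leq \eqref{eq:mrf} \leq F(x^*)$, forcing equality throughout and showing that $x^*$ solves the original problem \eqref{eq:mrf}. The main obstacle is the attainment step in the pairwise term: this is precisely where both the norm-induced-metric hypothesis (to produce a norming linear functional) and the linear-functions-$\subset \Lambda$ hypothesis are essential; without either, one would only obtain $\sigma_{\mathcal{K}_\Lambda}((\nabla_\Lambda y)_e) \leq d(x_u, x_v)$ and lose tightness on the pairwise side.
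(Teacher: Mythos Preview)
Your proof is correct and follows essentially the same route as the paper: invoke Thm.~\ref{thm:general_binconj} for the unaries, then for the pairwise term bound the support function above by the Lipschitz condition and below by exhibiting a norming linear functional $\lambda^*(x)=\langle v,x\rangle$ with $\|v\|_*\le 1$, which lies in $\Lambda\cap\Lip_d(\Omega)$ by hypothesis. The only addition is that you spell out the sandwich argument $F(x^*)=\mathcal{F}_\Lambda(y^*)=\eqref{eq:discrete_primal}\le\eqref{eq:mrf}\le F(x^*)$ for the ``in particular'' claim, which the paper leaves implicit.
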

\begin{proof}
Let $d$ be induced by some norm $\lVert \cdot \rVert$ and denote its dual norm by $\lVert \cdot \rVert_*$. As shown in \cref{thm:general_binconj}, the unaries preserve the original cost functions at $\varphi(\Omega)$. Hence it remains to show that the for the pairwise costs it holds $\sigma_{\cK_\Lambda}((\nabla_\Lambda y)_{(u,v)}) = \|x_u- x_v\|$. By assumption $(\nabla_\Lambda y)_{(u,v)} = \varphi(x_u) - \varphi(x_v)$.
We rewrite
$$
\sigma_{\mathcal{K}_\Lambda}(\varphi(x_u) - \varphi(x_v)) = \sup_{\lambda \in \Lambda \cap \Lip_d(\Omega)} \lambda(x_u) - \lambda(x_v) \leq \|x_u - x_v \|. 
$$
For any $x_u, x_v \in \Omega$ we have
\begin{align*}
\lVert x_u - x_v \rVert = \sup_{\substack{p \in \bR^m:\|p\|_* \leq 1}} |\langle x_u - x_v, p \rangle| = \langle x_u - x_v, p^* \rangle,
\end{align*}
where $p^*$ denotes the maximizer in the supremum, which exists due to the compactness of the unit ball in a finite-dimensional space. Define the linear function $\lambda^* := \langle \cdot, p^* \rangle$ and note that by assumption $\lambda^* \in \Lambda$. In addition we have shown that $\lambda^* \in \Lip_d(\Omega)$ is $1$-Lipschitz.
This implies $\lVert x_u - x_v \rVert = \lambda^*(x_u) - \lambda^*(x_v) \leq \sup_{\lambda \in \Lambda \cap \Lip_d(\Omega)} \lambda(x_u) - \lambda(x_v) \leq \lVert x_u - x_v \rVert$ and hence equality holds.
\end{proof}
	
\subsection{A Generalized Conjugacy Perspective}
The previous results can be obtained from a generalized conjugacy point of view. In particular, the convex conjugate of the lifted function $f_\Lambda^*$ is comprised by the notion of $\Phi$-conjugacy, see \cite[Ch.~11L*]{RoWe98}:

\begin{definition}[generalized conjugate functions] \label{def:phi_conjugates}
Let $X$ and $Y$ be nonempty sets. Let $\Phi: X \times Y \to \exR$ be any function. Let $f: X \to \exR$. Then the $\Phi$-conjugate of $f$ on $Y$ at $y \in Y$ is defined by
\begin{align}
f^\Phi(y) := \sup_{x \in X} \Phi(x, y) - f(x),
\end{align}
and the $\Phi$-biconjugate of $f$ back on $X$ at $x \in X$ is given by
\begin{align}
f^{\Phi\Phi}(x) := \sup_{y \in Y} \Phi(x, y) - f^\Phi(y).
\end{align}
We say that $f$ is a $\Phi$-envelope on $X$ if $f$ can be written in terms of a pointwise supremum of a collection of elementary functions $x \mapsto \Phi(x, y)  - \alpha$, where $(\alpha, y) \in \exR \times Y$ is the parameter element.

Let $g: Y \to \exR$. Then, the $\Phi$-conjugate of $g$ on $X$ at $x \in X$ is defined by
\begin{align}
g^\Phi(x) := \sup_{y \in Y} \Phi(x, y) - g(y),
\end{align}
and the $\Phi$-biconjugate of $g$ back on $Y$ at $y \in Y$ is given by
\begin{align}
g^{\Phi\Phi}(y) := \sup_{x \in X} \Phi(x, y) - g^\Phi(x).
\end{align}
We say that $g$ is a $\Phi$-envelope on $Y$ if $g$ can be written in terms of a pointwise supremum of a collection of elementary functions $y \mapsto \Phi(x, y) - \beta$, where $(\beta, x) \in \exR \times X$ is the parameter element.

\end{definition}

For $X=\Omega$, $Y = \bR^n$, and $\Phi(x, y) = \langle y, \varphi(x) \rangle$, the convex conjugate $f_\Lambda^*$ is identical to the $\Phi$-conjugate $f^\Phi$ of $f$, while its biconjugate $f_\Lambda^{**}$ is the tightest lsc convex extension of $f_\Lambda$ to $\con \varphi(\Omega)$. The $\Phi$-biconjugate $f^{\Phi\Phi}$ of $f$ at a point $x \in \Omega$ is equal to the classical biconjugate of $f_\Lambda$, evaluated at $\varphi(x)$, i.e., $f^{\Phi\Phi} = f_\Lambda^{**} \circ \varphi$ on $\Omega$, showing that the $\Phi$-biconjugate is a convexly composite function and, therefore, it is nonconvex in general. Actually, $\Phi$-conjugacy also comprises lifting to measures via $\varphi(x) = \delta_x$, $\Phi$ the corresponding dual pairing
and $Y=\cC(\Omega)$.

As a consequence of \cite[Ex.~11.63]{RoWe98}, the considered $\Phi$-conjugacy can be interpreted in terms of under-approximation by functions in $\Lambda$. In analogy to the biconjugate $f^{**}$, which is the pointwise supremum of affine-linear functions majorized by $f$, the $\Phi$-biconjugate $f^{\Phi\Phi}$ is the pointwise supremum of functions in $\Lambda$ up to constant translation majorized by $f$. This point of view also relates $(f^\Phi)^*$ and $f^{\Phi\Phi}$ by each other.
\begin{remark}\label{rem:hyperplane}
The function $(f^\Phi)^* = f_\Lambda^{**}$ is the pointwise supremum of all affine-linear functions $l_{\lambda,\beta}:=\langle \cdot, \lambda \rangle - \beta$ for which $q_{\lambda,\beta}:=\Phi(\cdot, \lambda) - \beta$ is majorized by $f$.
This can be seen as follows: The Legendre--Fenchel conjugate can be characterized via the identity
$(f^\Phi)^*(y) = \sup_{(\lambda, \beta) \in \epi (f^\Phi)} \langle y, \lambda \rangle - \beta$.
The observation now follows from the fact that $(\lambda, \beta) \in \epi (f^\Phi)$ if and only if $q_{\lambda,\beta}$ is majorized by $f$.
\end{remark}
The correspondence between $f_\Lambda^{**}$ and $(f^\Phi)^*$ and between the minorizers $l_{\lambda,\beta}$ and $q_{\lambda,\beta}$ is illustrated in \cref{fig:envelope}. Note that this is closely related to the idea of feature maps $\varphi$ in linear classifiers. 

\Cref{thm:general_binconj} identifies all lsc functions $f:X \to \bR$ as $\Phi$-envelopes whenever $\varphi$ is extremal:
\begin{corollary} 
Let $\Omega \subset \bR^m$ be nonempty and compact and let $f:\Omega \to \bR$ be bounded from below. Furthermore, let $\varphi:\Omega \to \bR^n$ be an extremal curve.
Then we have
\begin{align}
f^{\Phi\Phi} = f_\Lambda^{**} \circ \varphi = \cl f,
\end{align}
on $\Omega$.
\end{corollary}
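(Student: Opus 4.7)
The plan is to reduce this corollary to Theorem~\ref{thm:general_binconj} by passing to the epigraphical closure $\cl f$, which is lsc and, like $f$, bounded from below on the compact set $\Omega$. First I would invoke the identification $f^{\Phi\Phi} = f_\Lambda^{**} \circ \varphi$ on $\Omega$, stated just after Definition~\ref{def:phi_conjugates} and valid for every $f : \Omega \to \exR$. This identification alone already reduces the task to showing $f_\Lambda^{**} \circ \varphi = \cl f$ (equivalently, $f^{\Phi\Phi} = \cl f$) when the lsc assumption is dropped.

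Next, I would show the closure-invariance of the $\Phi$-biconjugate, namely $f^{\Phi\Phi} = (\cl f)^{\Phi\Phi}$. The key observation is Remark~\ref{rem:hyperplane}: $f^{\Phi\Phi}(x)$ equals the pointwise supremum over the continuous elementary functions $q_{\lambda,\beta}(\cdot) = \langle \varphi(\cdot), \lambda\rangle - \beta$ that are majorized by $f$. Since each $q_{\lambda,\beta}$ is continuous, hence lsc, and $\cl f$ is by definition the greatest lsc minorant of $f$, the inequality $q_{\lambda,\beta} \leq f$ is equivalent to $q_{\lambda,\beta} \leq \cl f$. Thus the two admissible families of elementary minorants coincide, and the resulting suprema are equal.

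Finally, I would apply Theorem~\ref{thm:general_binconj} to $\cl f$ in place of $f$. Since $\cl f$ is lsc on the compact $\Omega$, the theorem yields $(\cl f)_\Lambda^{**} \circ \varphi = \cl f$. Combining this with the definitional identity $(\cl f)^{\Phi\Phi} = (\cl f)_\Lambda^{**} \circ \varphi$ and the closure-invariance established above, I would chain
\begin{equation*}
f_\Lambda^{**} \circ \varphi \;=\; f^{\Phi\Phi} \;=\; (\cl f)^{\Phi\Phi} \;=\; (\cl f)_\Lambda^{**} \circ \varphi \;=\; \cl f,
\end{equation*}
which is exactly the claimed equality on $\Omega$.

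I expect the only delicate point to be the closure-invariance step, since it is the only place where the weakening of the hypothesis (from lsc to merely bounded below) needs to be absorbed. However, this delicacy is only apparent: the fact that the elementary minorants $q_{\lambda,\beta}$ are themselves continuous makes the equivalence $q_{\lambda,\beta} \leq f \Leftrightarrow q_{\lambda,\beta} \leq \cl f$ a direct consequence of the universal property of $\cl f$, so no extremality argument beyond the one already used inside Theorem~\ref{thm:general_binconj} is required.
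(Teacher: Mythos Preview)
Your proposal is correct and follows essentially the same approach as the paper: both arguments apply Theorem~\ref{thm:general_binconj} to $\cl f$ and handle the passage from $f$ to $\cl f$ via the continuity of the elementary minorants $q_{\lambda,\beta}$. The only cosmetic difference is that you establish the equality $f^{\Phi\Phi} = (\cl f)^{\Phi\Phi}$ directly from the envelope characterization, whereas the paper sandwiches $f^{\Phi\Phi}$ between $(\cl f)^{\Phi\Phi}$ (by monotonicity, since $\cl f \leq f$) and $\cl f$ (since $f^{\Phi\Phi}$ is itself lsc and majorized by $f$), then collapses the chain with the theorem.
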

\begin{proof}
Since $f$ is finite-valued and bounded from below on $\Omega$ we have $(\cl f)(x) >-\infty$ for $x \in \Omega$ and therefore $\cl f$ is finite-valued. By \cite[Ex.~11.63]{RoWe98} $f^{\Phi\Phi}$ is the largest $\Phi$-envelope below $f$. Since $\varphi$ is continuous relative to $\Omega$, $f^{\Phi\Phi} = f_\Lambda^{**} \circ \varphi$ is lsc relative to $\Omega$. Since $\cl f$ is the largest lsc function below $f$ we have $f^{\Phi\Phi} \leq \cl f$. Since $\cl f \leq f$ we also have $(\cl f)^{\Phi\Phi} \leq f^{\Phi\Phi}$. Invoking \cref{thm:general_binconj} we have
$$
\cl f = (\cl f)^{\Phi\Phi} \leq f^{\Phi\Phi} \leq \cl f,
$$
on $\Omega$. Therefore $f^{\Phi\Phi} = \cl f$ on $\Omega$.
\end{proof}
Up to the presence of the compact set $\Omega$, this result generalizes the basic quadratic transform \cite[Ex.~11.66]{RoWe98} originally due to \cite[Prop.~3.4]{poliquin1990subgradient} (for lsc functions only), which is obtained by choosing $\varphi(x)=(x_1,x_2, \ldots, x_m, \|x\|^2)$.
In \cite[Thm.~1]{balder1977extension} a similar duality formula is shown for $\Phi$-couplings of a certain ``needle-type''. Our result is instead based on the extremality condition,
which, from a primal point of view, captures an intuitive and sharp (sufficient) condition for the above result for the one-sided linear couplings we consider.
For the component functions of $\varphi$ being the hat basis, see \cref{ex:piecewise_linear}, the class of $\Phi$-envelopes are the piecewise convex functions, see, \cref{fig:envelope} middle.

\section{A Tractable Conic Program for MRFs} \label{sec:cone_program}

\subsection{Nonnegativity and Moments}
After discretization a next step to obtain a practical implementation is to derive finite characterizations of the lifted biconjugates $f_\Lambda^{**}$ and the constraint set $\mathcal{K}_\Lambda$. We will show that the formulations can be rewritten in terms of a semi-infinite conic program which can be implemented using semidefinite programming in the piecewise polynomial case.

For now let $f_u \in \Lambda$. \Cref{eq:biconj_linear_lifted} then shows, that the challenging part is to characterize the moment space $\cP_\Lambda$:
The following result shows that up to normalization, $\cP_\Lambda$ can be written in terms of the dual cone of the cone of functions in $\Lambda$ that are nonnegative on $\Omega$. 
\begin{lemma} \label{lem:nonnegativity_cone}
Let $(\cM_\Lambda)_+$ be the cone of moments of nonnegative measures as defined in \cref{eq:nonnegative_moments}
and let $\mathcal{N}_\Lambda$ be the cone of the coefficients of the functions in $\Lambda=\langle \varphi_0, \ldots, \varphi_{n}\rangle$ that are nonnegative on $\Omega$ defined as: 
\begin{align}
\mathcal{N}_\Lambda:= \{ p \in \bR^{n+1} : \langle p, \varphi(x)\rangle \geq 0, \;\forall\,x \in \Omega \}.
\end{align}
Then $(\cM_\Lambda)_+$ is equal to $\mathcal{N}_\Lambda^*$, where $\mathcal{N}_\Lambda^*$ denotes the dual cone of $\mathcal{N}_\Lambda$.

If, in addition, $\varphi_0\equiv 1$ we also have $\cP_\Lambda = \{y \in (\cM_\Lambda)_+ \colon y_0 = 1\}$.
\end{lemma}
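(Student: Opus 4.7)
The plan is to prove both equalities by the standard bipolar-type duality between moment cones and cones of nonnegative elements of $\Lambda$, exploiting the fact that every Dirac sits in the moment cone.

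For the inclusion $(\cM_\Lambda)_+ \subset \mathcal{N}_\Lambda^*$, I would take $y = (\langle \mu, \varphi_k\rangle)_{k}$ with $\mu\in\cM_+(\Omega)$ and any $p\in\mathcal{N}_\Lambda$, and compute by linearity of the integral
$$\langle p, y\rangle = \sum_k p_k\langle\mu,\varphi_k\rangle = \int_\Omega \langle p,\varphi(x)\rangle\,\dd\mu(x)\ge 0,$$
since the integrand is nonnegative on $\supp\mu\subset\Omega$ and $\mu\ge 0$. This is purely routine.

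The converse inclusion $\mathcal{N}_\Lambda^*\subset(\cM_\Lambda)_+$ is where the work is, and I would argue by contraposition via Hahn--Banach: if $y\notin(\cM_\Lambda)_+$, then, because $(\cM_\Lambda)_+$ is a closed convex cone in $\bR^{n+1}$, there exists $p\in\bR^{n+1}$ separating $y$ from $(\cM_\Lambda)_+$, i.e., $\langle p,z\rangle\ge 0$ for all $z\in(\cM_\Lambda)_+$ while $\langle p,y\rangle<0$. Since $\varphi(x)=(\langle\delta_x,\varphi_k\rangle)_k\in(\cM_\Lambda)_+$ for every $x\in\Omega$, the separating inequality forces $\langle p,\varphi(x)\rangle\ge 0$ on $\Omega$, i.e., $p\in\mathcal{N}_\Lambda$. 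Combined with $\langle p,y\rangle<0$ this contradicts $y\in\mathcal{N}_\Lambda^*$.

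The main obstacle is establishing that $(\cM_\Lambda)_+$ is closed in $\bR^{n+1}$, which is exactly the ingredient that makes the separation argument go through. The plan here mimics the proof of Prop.~\ref{prop:convex_hull_moments}: given $y^t\to y$ with $y^t_k=\langle\mu^t,\varphi_k\rangle$ for some $\mu^t\in\cM_+(\Omega)$, one first bounds the total masses $\mu^t(\Omega)$ by exploiting that some nonnegative combination of the $\varphi_k$ is bounded below by a positive constant on the compact set $\Omega$ (under the standing assumption $\varphi_0\equiv 1$ one may simply take the zeroth component, giving $\mu^t(\Omega)=y_0^t$ which is convergent and hence bounded). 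Prokhorov's theorem then yields a weakly-$*$ convergent subsequence $\mu^{t_j}\overset{*}{\rightharpoonup}\mu\in\cM_+(\Omega)$, and continuity of $\varphi_k$ gives $y_k=\langle\mu,\varphi_k\rangle$, so $y\in(\cM_\Lambda)_+$.

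For the second statement, under $\varphi_0\equiv 1$ the zeroth moment is $y_0 = \langle\mu,1\rangle = \mu(\Omega)$, so the condition $y_0=1$ is equivalent to $\mu\in\cP(\Omega)$. Hence $\cP_\Lambda$, whose last $n$ components range over $(y_1,\ldots,y_n)$ coming from probability measures, is exactly the slice $\{y\in(\cM_\Lambda)_+: y_0=1\}$, which I would just verify by matching definitions.
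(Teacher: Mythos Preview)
Your approach is correct and essentially identical to the paper's: both directions rest on (i) the Dirac observation $\varphi(x)\in(\cM_\Lambda)_+$ and (ii) closedness of $(\cM_\Lambda)_+$, with the reverse inclusion obtained via separation/bipolarity. The paper phrases the second half as $(\cM_\Lambda)_+^{*}\subseteq\mathcal{N}_\Lambda$ followed by the bipolar identity $(\cM_\Lambda)_+^{**}=(\cM_\Lambda)_+$, which is just your Hahn--Banach contrapositive in dual-cone language; the paper simply asserts closedness, whereas you additionally spell out the Prokhorov argument.
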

\begin{proof}
Let $y \in (\cM_\Lambda)_+$. This means there exists $\mu \in \cM_+(\Omega)$ such that $y_k := \int_\Omega \varphi_k(x) \dd \mu(x)$. Let $p \in \mathcal{N}_\Lambda$. Because of $\langle p, \varphi(\cdot) \rangle \in \Lambda \subset \cC(\Omega)$ and $\langle p, \varphi(x)\rangle \geq 0$ for all $x \in \Omega$ and $\mu \in \cM_+(\Omega)$ is a nonnegative measure it holds $\langle p, y \rangle = \sum_{k=0}^n p_k \int_\Omega \varphi_k(x) \dd \mu(x)= \int_\Omega \langle p, \varphi(x) \rangle \dd \mu(x) \geq 0$ for all $x \in \Omega$. Since $p\in \mathcal{N}_\Lambda$ was an arbitrary choice from $\mathcal{N}_\Lambda$ we have $y \in \mathcal{N}_\Lambda^*$.

Next we show $(\cM_\Lambda)_+^{*} \subseteq \mathcal{N}_\Lambda$ as this implies $\mathcal{N}_\Lambda^* \subseteq (\cM_\Lambda)_+^{**} = (\cM_\Lambda)_+$, where the last equality holds since $ (\cM_\Lambda)_+$ is convex by definition of convexity and closed by the same argument used in the proof of \cref{prop:convex_hull_moments}.
Take $p \in (\cM_\Lambda)_+^{*}$. Let $x \in \Omega$. Now we choose $y \in (\cM_\Lambda)_+$ such that $y_k = \langle \delta_x, \varphi_k \rangle =\varphi_k(x)$. Then $p \in (\cM_\Lambda)_+^{*}$ implies that $\langle p, y \rangle \geq 0$. Since the choice $x \in \Omega$ was arbitrary we have $\langle p, \varphi(x) \rangle \geq 0$ for all $x \in \Omega$ and therefore $p \in \mathcal{N}_\Lambda$. 
 
Finally, $\cP_\Lambda = \{y \in (\cM_\Lambda)_+ \colon y_0 = 1\}$ follows from the fact that $\mu \in \cM(\Omega)$ is an element of $\cP(\Omega)$ if and only if $\mu \in \cM_+(\Omega)$ and $\langle \mu, \varphi_0 \rangle = 1$.
\end{proof}

Before we specialize $\Lambda$ to the space of polynomials, we derive a cone programming formulation of the Lipschitz constraints $\lambda_{(u,v)} \in \Lip_{d}(\Omega)$. Here we restrict $\Omega=[a,b]$ to be a compact interval. In the following, we provide an implementation for two specific metrics. As before, this boils down to nonnegativity of functions: Firstly, we consider total variation regularization, i.e. $d(x, y) = |x - y|$: Assume that $\Lambda$ is closed under differentiation, i.e., $\varphi$ is differentiable and $\varphi_k' \in \Lambda$. Then, the condition $\lambda_{(u,v)} \in \Lip_{d}([a,b])$ can be phrased in terms of the constraints $-1 \leq \lambda_{(u,v)}'(x) \leq 1$ for all $x\in [a,b]$, where $\lambda_{(u,v)}'$ is the derivative of $\lambda_{(u,v)}$. Equivalently, this means that the coefficients of the functions $1+\lambda_{(u,v)}'$ and $1-\lambda_{(u,v)}'$ are in $\bbN$. Secondly, we consider Potts regularization, i.e., $d(x, y) = \llbracket x = y \rrbracket$.
Since $\lambda_{(u,v)}$ is a univariate function, and constant terms in the dual variable do not matter, the condition $\lambda_{(u,v)} \in \Lip_{d}([a,b])$ can be equivalently phrased as $0 \leq \lambda_{(u,v)} \leq 1$, see, \cite[Ex.~1.17]{villani2003topics}. Equivalently, this means that the coefficients of the functions $\lambda_{(u,v)}$ and $1-\lambda_{(u,v)}$ are in $\bbN$.

\subsection{Semidefinite Programming and Nonnegative Polynomials}
As we have seen in the previous section, an important ingredient for a tractable formulation is the efficient characterization of nonnegativity of functions in a finite-dimensional subspace $\Lambda \subset \cC(\Omega)$. A promising choice of $\Lambda$ in that regards is the space of polynomials. Indeed, the characterization of nonnegativity of polynomials is a fundamental problem in \emph{convex algebraic geometry} surveyed in \cite{blekherman2012semidefinite}:
Let $\bR[x_1, \ldots, x_m]$ denote the ring of possibly multivariate polynomials with $p\in \bR[x_1, \ldots, x_m]$. Then $p=\sum_{\alpha \in I}p_\alpha x^\alpha$ for monomials $x^\alpha$. Let $\deg p$ denote its degree. A key result from real algebraic geometry is the Positivstellensatz due to \cite{krivine1964anneaux} and \cite{stengle1974nullstellensatz} refined in \cite{schmudgen1991thek} and \cite{putinar1993positive}. It characterizes polynomials $p \in \bR[x_1, \ldots, x_m]$ that are positive on semi-algebraic sets $\Omega$, i.e. $p(x) > 0$ for all $x \in \Omega$, where $\Omega$ is defined in terms of polynomial inequalities.
	
Key to such results is a \emph{certificate} of nonnegativity of the polynomial $p$ that involves \emph{sum-of-squares} (SOS) multipliers $q$, where $q$ is SOS if $q(x) = \sum_{i=1}^N q_i^2(x)$ for polynomials $q_i \in \bR[x_1, \ldots, x_m]$.
For intervals $X=[a,b]$, thanks to \cite[Thm. 3.72]{blekherman2012semidefinite} originally due to \cite[Cor. 2.3]{powers2000polynomials} we have following result:
\begin{lemma} \label{lem:pos-uni-interval}
Let $a < b$. Then the univariate polynomial $p \in \bR[x]$ is nonnegative on $[a,b]$ if and only if it can be written as
\begin{align}
p(x) =\begin{cases}
s(x) + (x-a)\cdot(b-x)\cdot t(x) & \text{if $\deg p$ is even,} \\
(x - a)\cdot s(x) + (b-x)\cdot t(x) & \text{if $\deg p$ is odd,}
\end{cases}
\end{align}
where $s, t \in \bR[x]$ are sum of squares. If $\deg p = 2n$, then we have $\deg s \leq 2n$, $\deg t\leq 2n -2$, while if $\deg p=2n+1$, then $\deg s \leq 2n, \deg t\leq 2n$.
\end{lemma}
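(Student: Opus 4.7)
The \emph{sufficiency} direction is immediate and I would dispatch it first: on $[a,b]$ we have $(x-a)\ge 0$, $(b-x)\ge 0$, and any sum of squares is nonnegative on all of $\bR$; hence a polynomial of either claimed form is nonnegative on $[a,b]$. So the content is in \emph{necessity}.

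For necessity, my first step is to factor $p$ over $\bR$ into real linear factors (coming from real roots, counted with multiplicity) and irreducible real quadratic factors (coming from complex conjugate pairs). Each irreducible real quadratic $(x-\beta)^2+\gamma^2$ is itself a sum of squares, so it can be absorbed into the $s$ (or squared multiplier) part. The real roots are classified by location:
\begin{itemize}
\item a real root $\alpha\in(a,b)$ must have even multiplicity, since otherwise $p$ would change sign at $\alpha$, contradicting $p\ge 0$ on $[a,b]$; such factors contribute $(x-\alpha)^{2k}$, which is SOS;
\item a real root $\alpha\le a$ yields a factor that is nonnegative on $[a,b]$ and can be written as a nonnegative constant plus a nonnegative multiple of $(x-a)$ (for $\alpha=a$ it is exactly $(x-a)$), which fits the template;
\item a real root $\alpha\ge b$ is handled symmetrically with $(b-x)$; odd numbers of "wrong-sign" factors on one side are impossible in view of $p\ge 0$, so these factors can be paired into quantities that are nonnegative on $[a,b]$.
\end{itemize}

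The main obstacle is assembling these per-factor representations into a single representation of the prescribed form, because the templates $s+(x-a)(b-x)t$ and $(x-a)s+(b-x)t$ are not obviously closed under multiplication: the naive product identity
\[
\bigl(s_1+(x-a)(b-x)t_1\bigr)\bigl(s_2+(x-a)(b-x)t_2\bigr)=s_1s_2+(x-a)(b-x)\bigl(s_1t_2+s_2t_1+(x-a)(b-x)t_1t_2\bigr)
\]
leaves the new "$t$" only nonnegative on $[a,b]$, not globally SOS. I would avoid this issue by a change of variable $x=a+\tfrac{b-a}{2}(1+\cos\theta)$, which turns $p$ into a nonnegative trigonometric polynomial; Fejér--Riesz then exhibits it as $|q(e^{i\theta})|^2$ for some complex polynomial $q$ of controlled degree, and separating the parts of $|q|^2$ that are polynomials in $\cos\theta$ from those carrying a $\sin\theta$ factor gives, upon pulling back to the $x$-variable, exactly the stated decomposition (the $\sin^2\theta$ factor becoming proportional to $(x-a)(b-x)$).

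\textbf{Degree bookkeeping.} In the even case $\deg p=2n$: the summand $(x-a)(b-x)t$ has degree $\deg t+2$, and the leading term of $p$ must be supplied by $s$ (since otherwise the two summands cannot cancel at top order while $s,t$ remain SOS), forcing $\deg s\le 2n$ and consequently $\deg t\le 2n-2$. In the odd case $\deg p=2n+1$: both $(x-a)s$ and $(b-x)t$ have odd degree, and matching the leading coefficient forces $\deg s,\deg t\le 2n$. Thus the degree bounds follow directly from the construction and Fejér--Riesz's sharp degree count. The hardest conceptual step is the Fejér--Riesz reduction; everything else is bookkeeping on roots and degrees.
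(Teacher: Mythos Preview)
The paper does not give its own proof of this lemma; it is quoted verbatim from \cite[Thm.~3.72]{blekherman2012semidefinite}, due originally to Powers--Reznick \cite[Cor.~2.3]{powers2000polynomials}. Your sufficiency argument is fine, and the Fej\'er--Riesz substitution is indeed a classical route to the \emph{existence} of such a representation. The genuine gap is in the degree bounds, and those are the whole point here (they are what make the SDP in Lem.~\ref{lem:sos_sdp} finite-dimensional).

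Your a~posteriori argument that ``the two summands cannot cancel at top order while $s,t$ remain SOS'' is false. Take $p(x)=x^2$ on $[-1,1]$, $s(x)=x^4$, $t(x)=x^2$: both are SOS, $s+(1-x^2)t=x^4+x^2-x^4=x^2=p$, and the leading terms cancel exactly. So nothing forces $\deg s\le\deg p$ in an arbitrary representation; you have to exhibit a \emph{specific} construction that meets the bounds. Unfortunately the naive Fej\'er--Riesz construction does not: if $\deg p=2n$ then $p(\cos\theta)$ has trigonometric degree $2n$, Fej\'er--Riesz gives $q$ of degree $2n$, and writing $q(e^{i\theta})=P(\cos\theta)+i\sin\theta\,Q(\cos\theta)$ via $e^{ik\theta}=T_k(\cos\theta)+i\sin\theta\,U_{k-1}(\cos\theta)$ yields $\deg P\le 2n$, hence $s=|P|^2$ of degree up to $4n$. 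The $x^2$ example above is precisely what this produces.

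The repair in the even case is to use the evenness of $p(\cos\theta)$ in $\theta$ to choose the Fej\'er--Riesz factor $q$ with \emph{real} coefficients (the self-reciprocal polynomial $z^{2n}p((z+z^{-1})/2)$ has real coefficients and roots in reciprocal pairs, so the inside-the-disk factor is conjugation-closed); then the centred Laurent polynomial $z^{-n}q(z)$ splits as $A(\cos\theta)+i\sin\theta\,B(\cos\theta)$ with $\deg A\le n$, $\deg B\le n-1$, which gives the sharp bounds. The odd case does not drop out of this directly (an even-type representation of an odd-degree $p$ necessarily has top-order cancellation). The argument in the cited references sidesteps all of this via the Goursat substitution $y=(x-a)/(b-x)$, which sends $[a,b]$ to $[0,\infty)$; there the template $S+yT$ \emph{is} closed under multiplication because $(S_1+yT_1)(S_2+yT_2)=(S_1S_2+y^2T_1T_2)+y(S_1T_2+S_2T_1)$, so the factor-over-$\bR$ argument you started with actually goes through with sharp degrees, and pulling back recovers both the even and odd forms on $[a,b]$.
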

Remarkably, the above result provides us with explicit upper bounds of the degrees of the SOS multipliers $s$ and $t$ that are important to derive a practical implementation: Then the SOS constraints can be formulated in terms of semidefinite and affine inequalities:
We adopt \cite[Lem. 3.33]{blekherman2012semidefinite} and \cite[Lem. 3.34]{blekherman2012semidefinite}:
\begin{lemma} \label{lem:sos_sdp}
A univariate polynomial $p \in \bR[x]$ with $\deg p = 2n$, $n \geq 0$ is SOS if and only if there exists a positive semidefinite matrix $Q \in \bR^{n+1 \times n+1}$ such that 
\begin{align} \label{eq:coefficients}
p_{k} =\sum_{\substack{0\leq i,j \leq n, \\i+j = k}} Q_{ij}, \quad \forall\, 0\leq k \leq 2n.
\end{align}
\end{lemma}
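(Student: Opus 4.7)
The plan is to use the monomial vector $v(x) = (1, x, x^2, \ldots, x^n)^\top \in \bR^{n+1}$ and to exhibit the stated identity as the coefficient matching that comes out of the quadratic form $v(x)^\top Q\, v(x)$. Concretely, I first expand
\begin{align*}
v(x)^\top Q\, v(x) = \sum_{0 \leq i, j \leq n} Q_{ij} x^{i+j} = \sum_{k=0}^{2n} \Biggl(\sum_{\substack{0\leq i,j \leq n \\ i+j = k}} Q_{ij}\Biggr) x^k.
\end{align*}
So the asserted equations \eqref{eq:coefficients} are exactly the condition $p(x) = v(x)^\top Q\, v(x)$ for all $x \in \bR$. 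The lemma then reduces to showing that a univariate polynomial of degree $2n$ is SOS if and only if it admits such a PSD Gram representation.

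For the ``if'' direction, given $Q \succeq 0$ with $Q \in \bR^{(n+1)\times(n+1)}$ I would take a Cholesky-type factorization $Q = \sum_{i=1}^r a_i a_i^\top$ with $a_i \in \bR^{n+1}$ (for example, via the spectral decomposition and absorbing the nonnegative eigenvalues). Then
\begin{align*}
p(x) = v(x)^\top Q\, v(x) = \sum_{i=1}^r (a_i^\top v(x))^2,
\end{align*}
and each $q_i(x) := a_i^\top v(x)$ is a polynomial in $\bR[x]$ of degree at most $n$, exhibiting $p$ as SOS.

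For the ``only if'' direction, suppose $p = \sum_{i=1}^r q_i^2$ with $q_i \in \bR[x]$. The key point is that each $q_i$ must have degree at most $n$: since squares are nonnegative, the leading coefficients of the $q_i^2$ cannot cancel, so $\deg p = 2\max_i \deg q_i$, and from $\deg p = 2n$ it follows that $\deg q_i \leq n$. Hence each $q_i$ can be written as $q_i(x) = a_i^\top v(x)$ for some $a_i \in \bR^{n+1}$, and setting $Q := \sum_{i=1}^r a_i a_i^\top$ gives a PSD matrix with $p(x) = v(x)^\top Q\, v(x)$, which by the expansion at the start yields \eqref{eq:coefficients}.

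The only step that requires care is the degree-bound argument in the ``only if'' direction (ensuring no $q_i$ exceeds degree $n$), which is the reason the Gram matrix can be chosen of size exactly $(n+1)\times(n+1)$. Everything else is routine linear algebra and expansion of a bilinear form in the monomial basis.
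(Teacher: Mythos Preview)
Your proof is correct and is the standard Gram-matrix argument. The paper does not supply its own proof of this lemma but simply cites \cite[Lem.~3.33 and Lem.~3.34]{blekherman2012semidefinite}; your argument is essentially the proof that appears in that reference.
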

Invoking the results above SDP-duality yields the following compact representation of $(\bbM)_+$:
\begin{lemma}
Let $n\geq0$. For odd degree $2n+1$, $y \in (\bbM)_+$ if and only if 
\begin{align}
b M_{0, n}(y) \succeq M_{1,n}(y) \succeq a M_{0, n}(y),
\end{align}
for Hankel matrices
\begin{align}
M_{i,n}(y) := \begin{bmatrix} y_i & y_{i+1} & \hdots & y_{i+n} \\ y_{i+1} & y_{i+2} & \hdots & y_{i+n+1} \\ \vdots & & \hdots & \vdots \\y_{i+n} & &\hdots & y_{i+2n} \end{bmatrix}.
\end{align}
For even degree $2n$, $y \in (\bbM)_+$ if and only if 
\begin{gather}
M_{0, n}(y) \succeq 0,\\
\left( a + b \right) M_{1, n-1}(y) - ab M_{0,n-1}(y) \succeq M_{2,n-1}(y).
\end{gather}
\end{lemma}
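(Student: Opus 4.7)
\emph{Proof sketch.} The plan is to dualize the sum-of-squares representations from Lem.~\ref{lem:pos-uni-interval} via Lem.~\ref{lem:sos_sdp} and then invoke Lem.~\ref{lem:nonnegativity_cone}, which states $\bbM = \mathcal{N}_\Lambda^*$. The key observation is that for a polynomial $q(x) = \sum_k q_k x^k$ with SOS Gram matrix $Q$ (so $q_k = \sum_{i+j=k} Q_{ij}$), the pairing with a moment vector $y$ rewrites as a trace inner product with a Hankel matrix:
\begin{equation*}
\langle y, q \rangle \;=\; \sum_k y_k \sum_{i+j=k} Q_{ij} \;=\; \sum_{i,j} y_{i+j}\, Q_{ij} \;=\; \langle M_{0,n}(y), Q \rangle,
\end{equation*}
and, more generally, multiplying $q(x)$ by $x^\ell$ before pairing produces $\langle M_{\ell,n}(y), Q \rangle$.

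For the odd degree case $2n+1$, Lem.~\ref{lem:pos-uni-interval} writes any $p \in \mathcal{N}_\Lambda$ as $p(x) = (x-a)s(x) + (b-x)t(x)$ with SOS multipliers $s,t$ of degree at most $2n$. Using Lem.~\ref{lem:sos_sdp} with PSD Gram matrices $S,T \in \bR^{(n+1)\times(n+1)}$, the dual pairing splits as
\begin{equation*}
\langle y, p \rangle \;=\; \langle M_{1,n}(y) - a\,M_{0,n}(y),\, S \rangle + \langle b\,M_{0,n}(y) - M_{1,n}(y),\, T \rangle.
\end{equation*}
Since $S,T$ range independently over the PSD cone, nonnegativity of this expression for all such $S,T$ is equivalent, by self-duality of the PSD cone, to $M_{1,n}(y) - a M_{0,n}(y) \succeq 0$ and $bM_{0,n}(y) - M_{1,n}(y) \succeq 0$, which is exactly the stated chain of inequalities.

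For the even case $2n$, writing $p(x) = s(x) + (x-a)(b-x)t(x)$ with $\deg s \le 2n$ and $\deg t \le 2n-2$ yields PSD Gram matrices $S \in \bR^{(n+1)\times(n+1)}$ and $T \in \bR^{n\times n}$. Expanding $(x-a)(b-x) = -x^2 + (a+b)x - ab$ and applying the moment--Hankel identity above on each monomial shift gives
\begin{equation*}
\langle y, p \rangle \;=\; \langle M_{0,n}(y), S \rangle + \langle (a+b) M_{1,n-1}(y) - ab\, M_{0,n-1}(y) - M_{2,n-1}(y),\, T \rangle,
\end{equation*}
and again self-duality of the PSD cone forces both Hankel matrices to be PSD, recovering the stated conditions. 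The only subtle point is ensuring that the degree bounds in Lem.~\ref{lem:pos-uni-interval} exactly match the sizes of the Gram matrices one uses, so that the SOS parametrization is surjective onto $\mathcal{N}_\Lambda$ and no spurious constraints are introduced; once this bookkeeping is done, the argument reduces to routine SDP duality and the Hankel identification is immediate.
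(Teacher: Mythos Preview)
Your argument is correct and is precisely the ``elementary SDP-duality'' the paper's one-line proof alludes to: you invoke Lem.~\ref{lem:nonnegativity_cone} to identify $\bbM$ with $\mathcal{N}_\Lambda^*$, parametrize $\mathcal{N}_\Lambda$ via the SOS certificates of Lem.~\ref{lem:pos-uni-interval} and Lem.~\ref{lem:sos_sdp}, and then the Hankel identity $\langle y, x^\ell q\rangle = \langle M_{\ell,n}(y), Q\rangle$ together with self-duality of the PSD cone gives the result. This is the same route as the paper, just with the details written out.
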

\begin{proof}
Follows by \cref{lem:pos-uni-interval} and \cref{lem:sos_sdp} invoking elementary SDP-duality.
\end{proof}

\subsection{Convergence of a Piecewise Polynomial Hierarchy}
In experiments, we will discretize the dual problem with a piecewise polynomial family of functions. For intervals, the following proposition shows that either by increasing the number of pieces or the degree of the polynomial the primal-dual gap can be reduced.
In our case we approximate the Lipschitz dual variable in terms of a Lipschitz spline. As a consequence existing results such as \cite[Thm.~2]{fix2014duality} do not apply. Instead, we use a construction based on Bernstein-polynomials. Then the result follows from \cite[Thm.~1]{brown1987lipschitz}.
\begin{proposition}\label{prop:gapconvergence}
Assume that $\Omega = [a, b] \subset \bR$, $a < b$, and let the metric $d$ be given by $d(x, y) = |x - y|$.
Furthermore, let $\Lambda\subset \cC(\Omega)$ be the space spanned by continuous piecewise polynomials on intervals $[t_i, t_{i+1}]$
defined by a regularly spaced grid with nodes given by $t_i = a + (b - a) \cdot (i -1) / K$, $i = 1, \hdots, K + 1$.
Then the optimality gap satisfies:
$$
\cref{eq:mrf} - \cref{eq:discrete_dual} = \mathcal{O}(1 / (K \cdot \sqrt{\deg})),
$$
where $\deg$ is the degree of the polynomial on each piece.
\end{proposition}
\begin{proof}
We consider the discretized dual problem~\cref{eq:discrete_dual} where $\cK_\Lambda$ is the set of coefficients corresponding to $1$-Lipschitz piecewise polynomials on $[a,b]$ of degree $\deg$ with $K$ pieces.
Also recall that we have the following relations between the dual and primal problems: $\cref{eq:discrete_dual} \leq \cref{eq:dualMPTsmall}=\cref{eq:opt_relax}$ where the last equality follows from strong duality \cref{prop:map_inference_mrf}.

Now, let us denote a maximizer of \cref{eq:dualMPTsmall} as $\lambda^* \in \Lip_d(\Omega)^\cE$. Existence of such a dual maximizer follows by \cref{prop:map_inference_mrf}.
Then, one has for any $\lambda \in \Lambda^\cE$:
\begin{align}
\min_{x \in \Omega} ~ f_u(x) - (\Div \lambda^*)_u(x) &= \min_{x \in \Omega} ~ f_u(x) - (\Div \lambda)_u(x) - (\Div \lambda^*)_u(x) + (\Div \lambda)_u(x) \notag \\
&\leq \min_{x \in \Omega} ~ f_u(x) - (\Div \lambda)_u(x) + \| -(\Div (\lambda^* - \lambda))_u \|_\infty.
\end{align}
This allows us to bound the optimality gap by:
\begin{align}
\cref{eq:dualMPTsmall} - \cref{eq:discrete_dual}  &\leq \sum_{u \in \cV} \| -(\Div (\lambda - \lambda^*))_u \|_{\infty} \notag \\
& \leq \sum_{u \in \cV} |d(u)| \cdot \sup_{e \in \cE} ~ \| \lambda_e - \lambda_e^* \|_{\infty} \notag \\
& \leq 2 |\cE| \cdot \sup_{e \in \cE} \| \lambda_e - \lambda_e^* \|_{\infty}, \label{eq:bound_laststep}
\end{align}
where $d(u)$ denotes the degree of the vertex $u$.

For a $L$-Lipschitz function $f : [0, 1] \to \bR$ there exists a Bernstein polynomial $p : [0, 1] \to \bR$ with $p(0) = f(0)$ and $p(1) = f(1)$
such that $\| p - f \|_\infty \leq \frac{3L}{2} \deg^{-1/2}$ \cite[Thm.~2.6]{carothers1998short}. By \cite[Thm.~1]{brown1987lipschitz},
this polynomial is $L$-Lipschitz as well. For each $e\in \cE$ we pick the coefficients of the function $\lambda_e$ such that it
approximates the optimal dual variable $\lambda_e^*$ with such a polynomial individually on each interval $[t_i, t_{i+1}]$. Then one obtains an overall $1$-Lipschitz polynomial with
the following bound:
\begin{equation}
\| \lambda_e - \lambda_e^* \|_{\infty} \leq \frac{3(b - a)}{2K\sqrt{\deg}}.
\end{equation}
Inserting this into \cref{eq:bound_laststep} yields via $\cref{eq:opt_relax} = \cref{eq:dualMPTsmall}$:
\begin{equation}
\cref{eq:opt_relax} - \cref{eq:discrete_dual} \leq 3 |\cE| \frac{(b - a)}{K \sqrt{\deg}},
\end{equation}
which due to tightness $\cref{eq:opt_relax} =\cref{eq:mrf}$ from \cref{prop:tightness_local_marginal} gives the stated $\mathcal{O}(1 / (K\cdot \sqrt{\deg}))$ rate.
\end{proof}

\subsection{A First-Order Primal-Dual Algorithm} \label{sec:impl_pdhg}
We are now ready to describe the algorithm for solving the resulting semidefinite program. We first consider the case $f_u \in \Lambda$ and $\Lambda$ is the space of univariate polynomials.
We propose to use the PDHG~\cite{chambolle2011first} algorithm, as it can exploit the partially separable structure of our SDP. The primal-dual algorithm optimizes the problem \cref{eq:discrete_primal} via alternating projected gradient descent/ascent steps applied to the saddle-point formulation of \cref{eq:discrete_primal}:
\begin{align} \label{eq:saddle_point_pdhg}
\min_{y \in (\cP_{\Lambda})^\cV} ~\max_{p \in (\mathcal{K}_\Lambda)^\cE} ~~\langle y, f - \Div_\Lambda p \rangle,
\end{align}
which is obtained by expanding the support function in Problem~\cref{eq:discrete_primal} and substituting the expression~\cref{eq:biconj_linear_lifted} for the lifted biconjugates.
In each iteration the algorithm performs a projected gradient ascent step in the dual $p$ followed by a projected gradient descent step in the primal variable $y$.
Subsequently it performs an extrapolation step in the primal. 
The projections onto the sets $(\cP_{\Lambda})^\cV$ and $(\mathcal{K}_\Lambda)^\cE$ are separable and can therefore be carried out in parallel on a GPU using the SDP characterizations derived above. For practicality, we introduce additional auxiliary variables and linear constraints to decouple the affine constraints~\cref{eq:coefficients} and the SDP constraints. The projection operator of the semidefinite cone can then be solved using an eigenvalue decomposition. 

\subsection{Piecewise Polynomial Duals and Nonlinear Lifted Biconjugates}
The polynomial discretization can be extended by means of a continuous piecewise polynomial representation of the dual variables resulting in a possibly more accurate approximation of the dual subspace $\Lambda$. Then, both, nonnegativity and Lipschitz continuity can be enforced on each piece $\Omega_k$ individually. Continuity of the piecewise polynomial dual variables can be enforced via linear constraints.
The corresponding primal variable $y$ belongs to $y \in (\cM_\Lambda)_+ \times (\cM_\Lambda)_+ \times \cdots \times (\cM_\Lambda)_+$. Then the restriction that $y$ is a moment vector of a probability measure supported on the whole space $\Omega$ yields an additional sum-to-one constraint on the $0^{\text{th}}$ moments $1= \sum_{k=1}^K y_{k,0}$.

Another issue to address is when $f_u \not\in \Lambda$ which results in a nonlinear lifted biconjugate over the moment-space as in \cref{fig:envelope}. 

The formulation which is derived next addresses both: In particular it allows one to choose $\Lambda$ independently from $f_u$ which can even be discontinuous, as long as $f_u$ has a piecewise polynomial structure.
Key to the formulation is to rewrite the inner minimum in the dual formulation~\cref{eq:discrete_dual} exploiting a duality between nonnegativity and minimization of functions:
\captionsetup[subfigure]{labelformat=parens,justification=centering,singlelinecheck=false}
\begin{figure*}[!t]
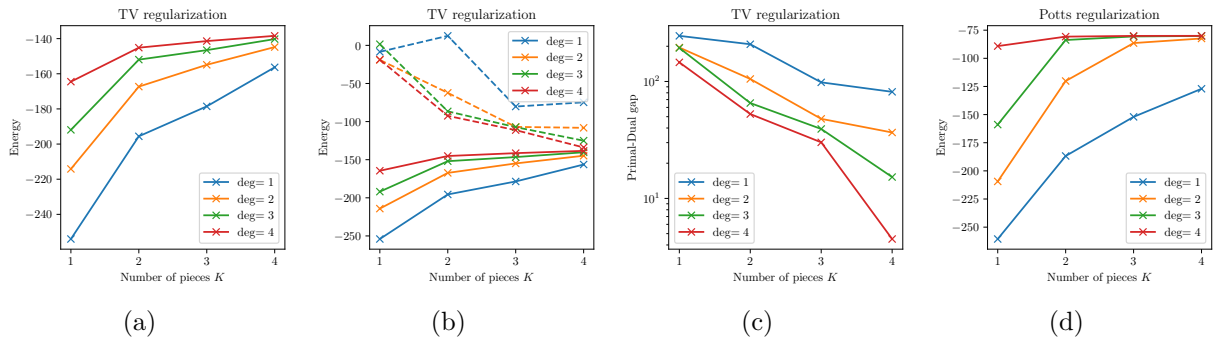

\centering
\begin{subfigure}[b]{0.23\linewidth}
\centering
\scalebox{0.45}{\input{figures/energiesl1.pgf}}
\caption{}
\end{subfigure}
\hfill
\begin{subfigure}[b]{0.23\linewidth}
\centering
\scalebox{0.45}{\input{figures/energies_pd.pgf}}
\caption{}
\end{subfigure}
\hfill
\begin{subfigure}[b]{0.23\linewidth}
\centering
\scalebox{0.45}{\input{figures/energies_pd_gap.pgf}}
\caption{}
\end{subfigure}
\hfill
\begin{subfigure}[b]{0.23\linewidth}
\centering
\scalebox{0.45}{\input{figures/energiesl0.pgf}}
\caption{}
\end{subfigure}	
\caption{Primal and dual energies for MAP-inference in a continuous MRF with TV regularization using a piecewise polynomial hierarchy of dual variables.
(a) shows the dual energy for TV. In (b) the dashed lines correspond to the primal energy at the rounded solution and the solid lines correspond to the dual energy.
(c) shows the gap between the nonconvex primal energy at the rounded solution and the dual energy for TV regularization. (d) shows the dual energies for Potts regularization.
}
\label{fig:rel_tightness}
\end{figure*} 
Let $\Omega=[a,b]$, $a<b$ be a compact interval. Let $a = t_1 < t_2 < t_3< \cdots <t_{K+1} = b$ be a sequence of knots, where $\Omega_k:= [t_k, t_{k+1}]$. Let $\Theta$ be the space of univariate polynomials with some maximum degree $n$. 
Let $f:\Omega \to \bR$ be a possibly discontinuous lsc piecewise polynomial function defined by $f(x) = \min_{1\leq k \leq K} f_k(x) + \iota_{\Omega_k}(x)$ with $f_k \in \Theta$, i.e., $f_k(x) = \langle \varphi(x), w_k \rangle$ for coefficients $w_k \in \bR^{n+1}$, where $\varphi_0\equiv 1$.
First observe the following duality between nonnegativity and minimization of a lsc function:
$$
\min_{x \in \Omega} ~f(x) = \max_{q \in \bR} ~q - \iota_{\cN(\Omega)}(f-q),
$$
where we denote by $\cN(\Omega)=\{\lambda :\Omega \to \bR : \lambda(x) \geq 0, \forall\; x \in \Omega \}$ the cone of nonnegative lsc functions on $\Omega$.
Then we obtain for $A q = (e_0 q, \ldots, e_0 q)$, where $e_0=(1, 0, \ldots, 0) \in \bR^{n+1}$ is the $0^{\text{th}}$ unit vector:
\begin{align} \label{eq:nonnegativity_piecewise}
\min_{x \in \Omega} ~f(x) &= \max_{q \in \bR} ~q - \iota_{\cN(\Omega)}(f-q) \\
&=\max_{q \in \bR}~q - \sum_{k=1}^K \iota_{\cN_{\Theta}}(w_k - A_k q).
\end{align}
Fenchel--Rockafellar duality then yields:
\begin{align*} 
\min_{x \in \Omega} ~f(x) &= \min_{y \in (\bR^{n+1})^K} \iota_{\{1\}}(A^* y) + \sum_{k=1}^K \langle y_k, w_k \rangle + \iota_{(\cM_{\Theta})_+}(y_k) \notag \\
&= \min_{\substack{y \in ((\cM_{\Theta})_+)^K \\ \sum_{k=1}^K y_{k,0} = 1}} \sum_{k=1}^K\langle y_k, w_k \rangle.
\end{align*}
This formulation can be substituted in the dual problem~\cref{eq:discrete_dual} and we obtain
\begin{align}\label{eq:decomposition_piecwise_moments}
\sup_{p \in ((\bR^n)^K)^\cE} \sum_{u  \in \cV} ~\min_{\substack{y \in ((\cM_{\Theta})_+)^K \\ \sum_{k=1}^K y_{k,0} = 1}} \sum_{k=1}^K \langle y_k, (w - \Div_\Lambda p)_{u,k} \rangle - \sum_{e \in \cE} \iota_{\mathcal{K}_\Lambda}(p_e).
\end{align}
Here the dual variables $\lambda$ are chosen such that $(\Div_\Lambda p)_u$ represents a piecewise polynomial with knots $a = t_1 < t_2 < t_3< \cdots <t_{K+1} = b$ such that for each piece we have $\langle (\Div_\Lambda p)_{u,k}, \varphi(\cdot)\rangle \in \Theta$. Note that this does not require $\Lambda$ to be equal to the whole space of continuous piecewise polynomials of degree $n$. Indeed, $\Lambda$ can be a subspace thereof which covers the case where $f_u \not\in \Lambda$.

\section{Numerical Experiments}
\label{sec:num}
\subsection{Empirical Convergence Study}
\begin{figure*}[t!]
\centering
\renewcommand{\tabcolsep}{0.7mm}
\begin{tabular}{cccc}
Left image stereo pair & Standard $k=30$ & $k=5,\deg=1$ & $k=5,\deg=7$ \\ [2mm]
\includegraphics[width=0.24\linewidth]{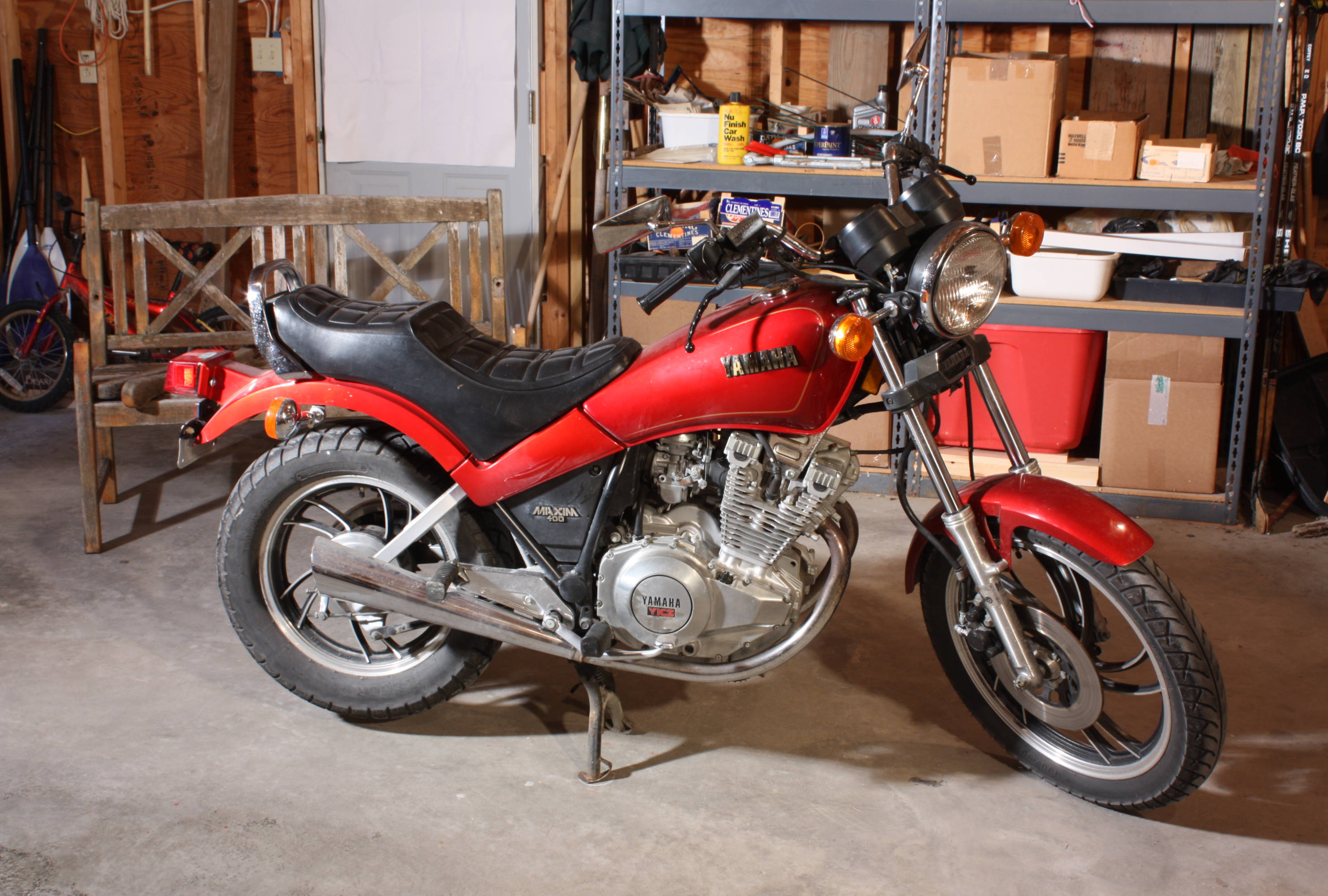}&\includegraphics[width=0.24\linewidth]{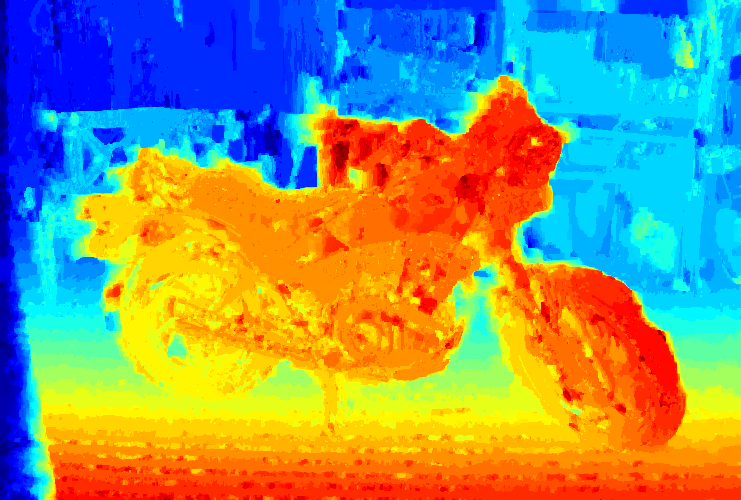}&\includegraphics[width=0.24\linewidth]{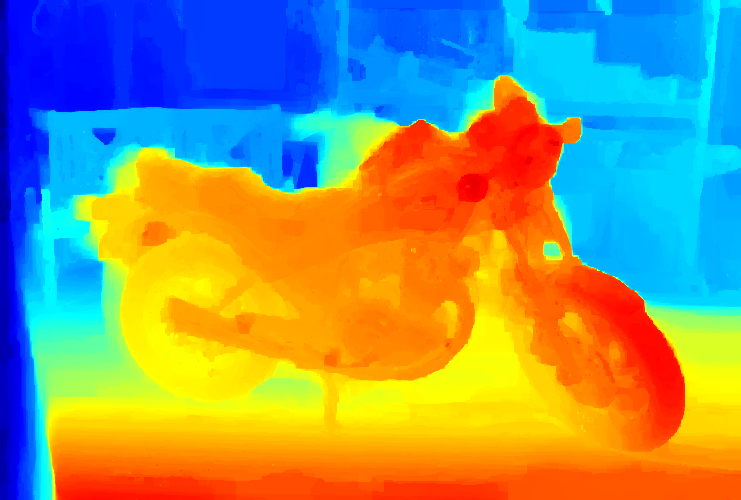} & \includegraphics[width=0.24\linewidth]{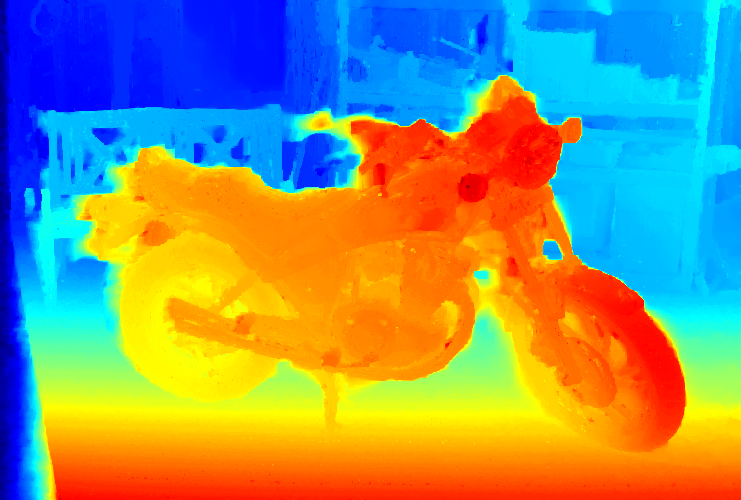} \\
& rounded $24611.51$ &  rounded $22283.25$ & rounded $19428.49$ \\
&  &  dual $16227.80$ &  dual $17472.13$\\[2mm]
\includegraphics[width=0.24\linewidth]{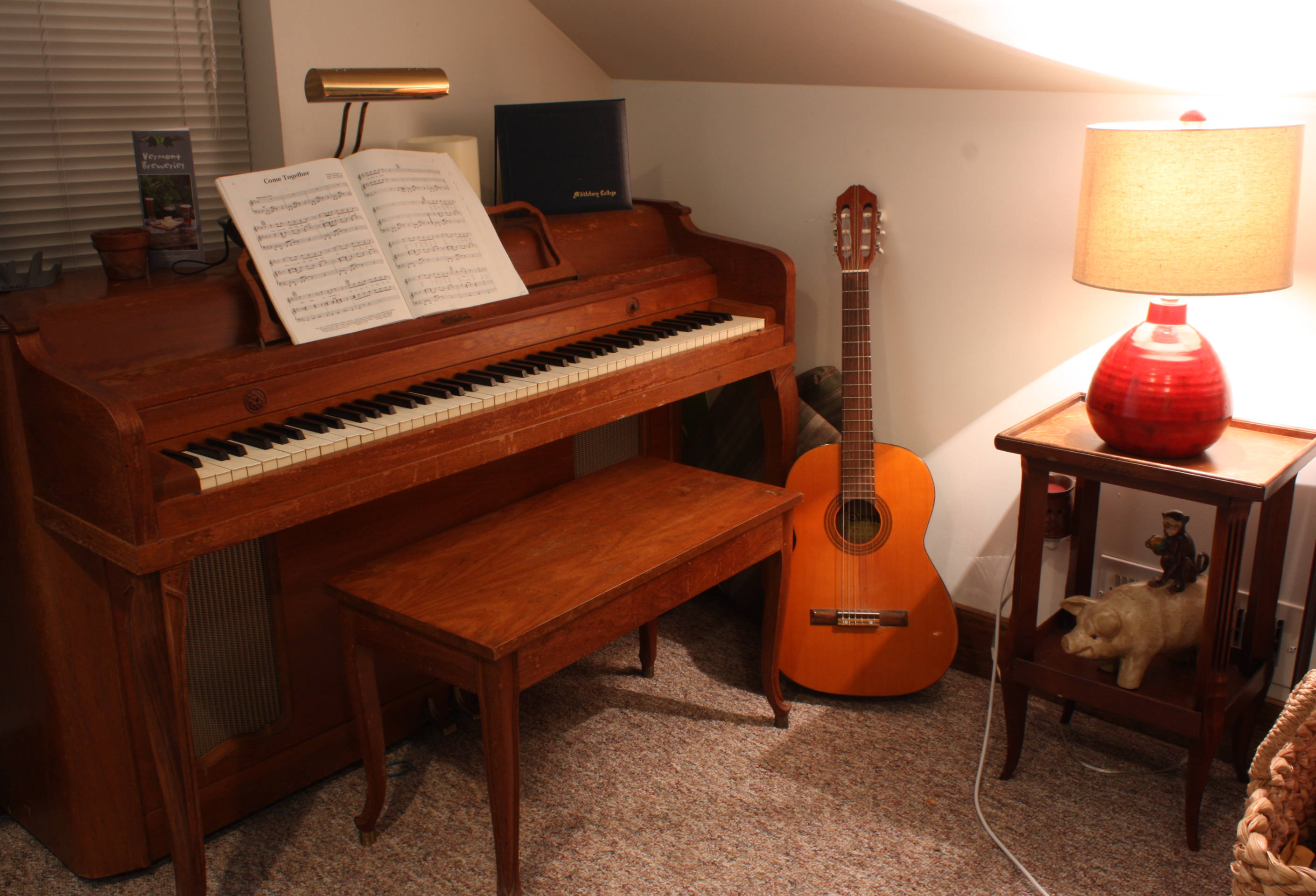}&\includegraphics[width=0.24\linewidth]{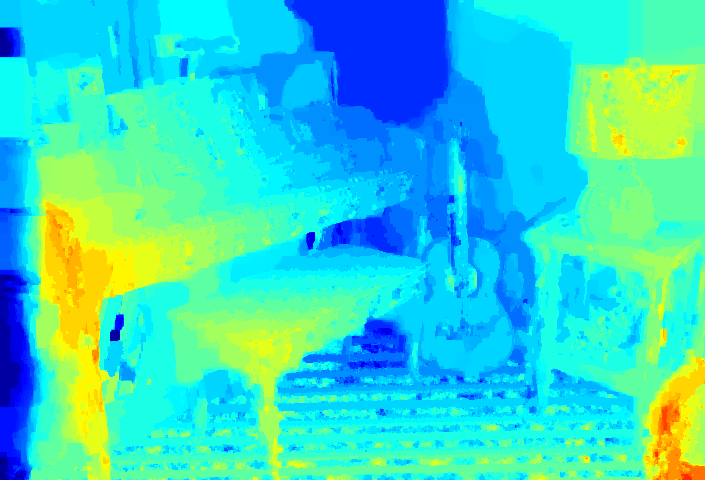}&\includegraphics[width=0.24\linewidth]{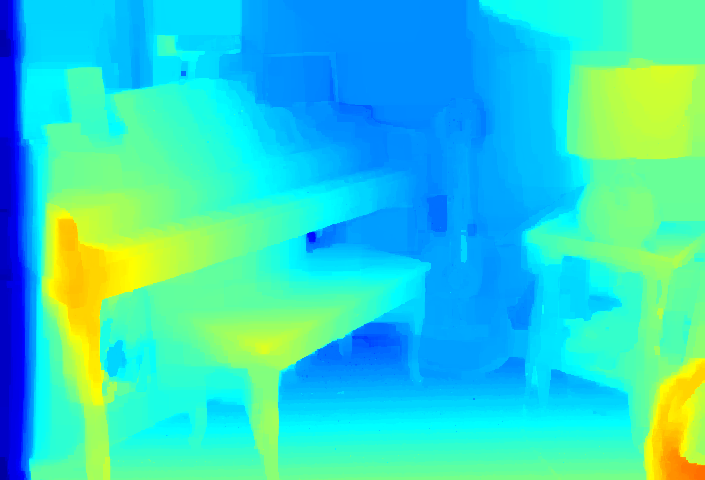} & \includegraphics[width=0.24\linewidth]{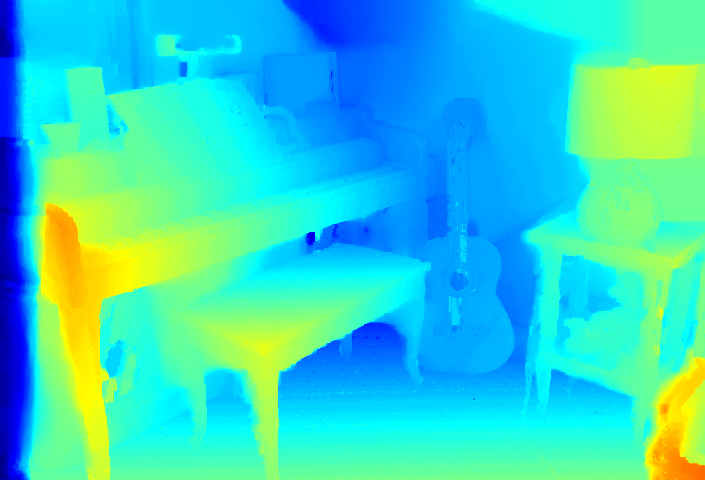} \\ 
& rounded $17510.55$ &  rounded $15027.55$ & rounded $13380.71$ \\
&  &  dual $10962.53$ &  dual $12008.45$
\end{tabular}

\caption{Stereo disparity estimation from a stereo image pair: Left: Standard MRF/OT discretization implemented using a continuous piecewise linear under-approximation for the unaries and piecewise linear duals. middle: piecewise linear duals. right: piecewise polynomial duals. The visual appearance of the solution to the standard MRF/OT discretization shows a strong grid bias. The dual energy gap increases for increasing the degree and/or the number of pieces. Likewise the energy at the rounded solution decreases.}
\label{fig:stereo_motorcycle}
\end{figure*}
In this first experiment we evaluate the local marginal polytope relaxation of the MRF formulation~\cref{eq:mrf} using a piecewise polynomial hierarchy of dual variables. We choose the graph $(\cV, \cE)$ to be a square grid of size $16 \times 16$. I.e., the vertices $\cV$ correspond to the points in the plane with its $x$- and $y$-coordinates being integers in the range $1,2,\ldots, 16$, and two vertices are connected by an edge whenever the corresponding points are at distance 1. We fix a random polynomial data term $f_u: [-1, 1] \to \bR$ of degree $4$ at each vertex $u$ by fitting a random sample of data points. To obtain a high-accuracy solution we solve the primal SDP formulation corresponding to the saddle-point formulation~\cref{eq:decomposition_piecwise_moments} with MOSEK\footnote{\url{https://www.mosek.com/products/academic-licenses}}. For recovering a primal solution at each vertex $u$ we compute the mode w.r.t. the $0^{\text{th}}$ moments to select the best interval denoted by $k^* = \argmax_{1\leq k \leq K} (y_u)_{k, 0}$. Then we compute the mean of the discretized measure corresponding to the $(k^*)^{\text{th}}$ interval as $x_u = (y_u)_{k^*, 1}$.

\Cref{fig:rel_tightness} visualizes the primal and dual energies for varying degrees and/or number of pieces of the dual variable. While the dual energy strictly increases with higher degrees and/or number of pieces the primal energy is evaluated at the rounded solution and therefore does not strictly decrease in general. While for TV increasing the degree vs. increasing the number of pieces (for $K \cdot \deg$ constant) leads to similar performance, for Potts, in many situations, increasing the degree leads to larger dual energies, e.g., consider $\deg=4,K=1$ vs. $\deg=1,K=4$, red curve vs. blue curve in \cref{fig:rel_tightness}(d). In further experiments, we observed, that this holds in particular when the structure of the dual variables and the unaries match, i.e., $f_u, \lambda_{e} \in \Lambda$. Note that for Potts, since the dual variables are uniformly bounded on $\Omega$ and the derivative can be unbounded we drop the continuity constraint which leads to a more compact formulation and larger dual energies.
\begin{table}
\begin{center}
\begin{tabular}{c || c | c | c} 
\multicolumn{4}{c}{Dual energies} \\
\hline
$\deg$ & $K=1$ & $K=3$ & $K=5$ \\ [0.5ex] 
\hline\hline
$1$ & 14180.08 & 15733.71 & 16227.80 \\
$2$ & 15052.18 & 16430.97 & 16773.75 \\
$3$ & 15601.89 & 16778.87 & 17055.25 \\
$4$ & 15938.92 & 16998.63 & 17235.21 \\
$5$ & 16191.40 & 17147.92 & 17346.49 \\
$6$ & 16369.95 & 17243.47 & 17422.26 \\
$7$ & 16480.22 & 17308.91 & 17472.13
\end{tabular}
\qquad
 \begin{tabular}{c || c | c | c} 
  \multicolumn{4}{c}{Energies rounded} \\
  \hline
  $\deg$ & $K=1$ & $K=3$ & $K=5$ \\ [0.5ex] 
 \hline\hline
$1$ & 28982.45 & 25005.05 & 22283.26 \\
$2$ & 31038.14 & 22525.12 & 21049.32 \\
$3$ & 28505.41 & 21500.65 & 20428.34 \\
$4$ & 27255.48 & 20841.62 & 20049.04 \\
$5$ & 25795.56 & 20344.17 & 19764.23 \\
$6$ & 24081.51 & 20032.77 & 19559.00 \\
$7$ & 23142.67 & 19869.71 & 19428.50
\end{tabular}
\end{center}
\caption{Energies for stereo matching Motorcycle. Left: Dual energies. Right: Primal energies at the rounded solution.}
\label{tab:stereo_motorcycle}
\end{table}

\subsection{Stereo Matching} \label{sec:stereo_matching}
In this experiment we consider stereo matching using the anisotropic relaxation~\cref{eq:opt_relax}. We consider the Motorcycle and the Piano image pairs from the Middlebury stereo benchmark~\cite{scharstein2014high}. We downsample the images by factor $4$. The disparity cost term is first calculated using 135 discrete disparities obtained by shifting the images by the corresponding amount of pixels and comparing the image gradients. More specifically, given a RGB image $I$ mapping from $[1, \ldots, n_y] \times [1, \ldots, n_x]$ to a RGB value in $\bR^3$, the $x$ and $y$ derivatives are calculated as $I_x (i, j) = I(i, \min\{ j + 1, n_x\}) - I(i, j)$ and $I_y (i, j) = I(\min\{i + 1, n_y\}, j) - I(i, j)$. For a stereo image pair $(L, R)$ and a disparity $d \in \bN$ the cost at pixel $(i, j)$ is then calculated as
\begin{align*}
	D(i, j, d) = &\min\{\lVert L_x(i, \min\{j + d, n_x\}) - R_x(i, j) \rVert_1, 0.1\} + \\
	&\min\{\lVert L_y(i, \min\{j + d, n_x\}) - R_y(i, j) \rVert_1, 0.1\}.
\end{align*}
Then, the cost dataterm is approximated from below in terms of a continuous piecewise cubic polynomial $f_u : [1, 135] \to \bR$ using 30 pieces at each $u \in \cV$. This dataterm is precomputed once during a preprocessing and subsequently used as a benchmark for the different methods that we compare.
For the coupling, we use a total variation-like regularization, i.e., $f_{uv}(x_u, x_v)=\alpha|x_u - x_v|$ with weight $\alpha=0.2$.
In \cref{fig:stereo_motorcycle} we compare the standard MRF/OT discretization as described in \cref{ex:sampling} with our framework using piecewise linear and piecewise polynomial dual variables with degree 7 both with 5 pieces.
The standard MRF/OT discretization is equivalent to a piecewise linear approximation of the data term with piecewise linear duals in our framework. The piecewise linear approximation is obtained by sampling the piecewise cubic polynomial $f_u$ at the interval boundaries of the pieces. The reported energy for the solution of the standard approach in \cref{fig:stereo_motorcycle} is also evaluated using the piecewise linear cost. As the resulting optimization problem is large-scale we solve the saddle-point formulation~\cref{eq:saddle_point_pdhg} with PDHG~\cite{chambolle2011first} as described in \cref{sec:impl_pdhg} using the GPU-based PDHG framework prost\footnote{\url{https://github.com/tum-vision/prost}}. In contrast to the previous experiment which uses a combined mode and mean rounding procedure we found the plain mean of the discretized measure to produce better results on real data: More explicitly we recover a solution according to $x_u = \sum_{k=1}^K t_k (y_u)_{k,0}$ at each vertex $u \in \cV$.
In \cref{tab:stereo_motorcycle} we compare both, dual and nonconvex primal energies, for a larger hierarchy of dual subspaces.

\section{Discussion}
We presented a method to reduce duality gaps in the Lagrangian relaxation of the MAP-inference problem in a continuous MRF taking a nonlinear optimization-driven approach. Our theoretical contribution identifies \emph{extremality} of the lifting as a key component, as it leads to convexifications which do not discard any information for a wide range of nonconvex cost functions. Using results from convex algebraic geometry we provide, to our knowledge, the first tractable formulation of the polynomial discretization in terms of semidefinite programming. We have provided a parallel implementation of a first-order primal dual algorithm on a GPU which can handle large problems. Indeed, the approach of \cite{lasserre2001global,lasserre2002semidefinite} applied directly to the original problem~\cref{eq:mrf} (with $f_u, f_{uv}$ polynomial) attempts to solve the \emph{full} marginal polytope relaxation which is tight but intractable for large $\cV$ as the number of coupling moments explodes (grows like a polynomial $|\cV|^{\deg}$). In contrast, our framework applies to the \emph{local} marginal polytope relaxation which is not tight in general but leads to a tractable formulation as it exploits the sparse structure of the optimization problem.

As revealed by our experiments, both increasing the number of pieces and the degree of the dual variables successfully reduces the nonconvex duality gap.
In particular, the algorithm is applied to the stereo matching problem between two images, showing significant improvements over piecewise constant or piecewise linear discretizations in the dual which demand a higher number of samples.

For total variation regularized problems, our theory suggests that the duality gap vanishes like $\mathcal{O}(1 / (K \cdot \sqrt{\deg}))$. The faster convergence in the number of pieces $K$ is also confirmed by our experiments. However, setting the degree $\deg \geq 2$ has the attractive theoretical property that all information of the original nonconvex cost is preserved. For Potts regularization, our experiments indicate that increasing the degree leads to better results than increasing the pieces.

Going beyond the piecewise-linear setting might be particularly promising for vector- or manifold-valued label-spaces with $\dim(\Omega) > 1$.
Labeling problems in such higher-dimensional settings were recently considered in \cite{laude16eccv,mollenhoff2019lifting,vogt2020lifting}. There, due to the piecewise-linear dual variables, the complexity grows exponentially with $\dim(\Omega)$. Moreover, as discussed in \cite{vogt2020lifting}, the piecewise-approach inherently requires an approximation of $\Omega$ by a triangulated manifold. Building upon the techniques developed in this paper, an interesting direction for future work is to investigate whether these two drawbacks can be overcome by considering suitably defined approximation spaces for the dual variables in vector- or manifold-valued settings.

\section*{Acknowledgments}
We would like to thank Johannes Milz, Peter Ochs and Jan-Hendrik Lange for their valuable feedback on an early version of this manuscript.

\bibliographystyle{abbrv}
\bibliography{references}

\begin{thebibliography}{10}

\bibitem{bach2019submodular}
F.~Bach.
\newblock Submodular functions: from discrete to continuous domains.
\newblock {\em Mathematical Programming}, 175(1-2):419--459, 2019.

\bibitem{balder1977extension}
E.~J. Balder.
\newblock An extension of duality-stability relations to nonconvex optimization
  problems.
\newblock {\em SIAM J. Control Optim.}, 15(2):329--343, 1977.

\bibitem{bergmann2015restoration}
R.~Bergmann, R.~H. Chan, R.~Hielscher, J.~Persch, and G.~Steidl.
\newblock Restoration of manifold-valued images by half-quadratic minimization.
\newblock {\em arXiv preprint arXiv:1505.07029}, 2015.

\bibitem{bernard2017combinatorial}
F.~Bernard, F.~R. Schmidt, J.~Thunberg, and D.~Cremers.
\newblock A combinatorial solution to non-rigid 3d shape-to-image matching.
\newblock In {\em IEEE Conference on Computer Vision and Pattern Recognition
  (CVPR)}, 2017.

\bibitem{Ber99}
D.~Bertsekas.
\newblock {\em Nonlinear Programming}.
\newblock Athena Scientific, 1999.

\bibitem{blekherman2012semidefinite}
G.~Blekherman, P.~A. Parrilo, and R.~R. Thomas.
\newblock {\em Semidefinite Optimization and Convex Algebraic Geometry}.
\newblock Society for Industrial and Applied Mathematics, Philadelphia, PA,
  2012.

\bibitem{brown1987lipschitz}
B.~Brown, D.~Elliott, and D.~Paget.
\newblock Lipschitz constants for the {B}ernstein polynomials of a {L}ipschitz
  continuous function.
\newblock {\em Journal of approximation theory}, 49(2):196--199, 1987.

\bibitem{bui2021zero}
H.~T. Bui, R.~S. Burachik, A.~Y. Kruger, and D.~T. Yost.
\newblock Zero duality gap conditions via abstract convexity.
\newblock {\em Optimization}, 0(0):1--37, 2021.

\bibitem{caratheodory1911variabilitatsbereich}
C.~Carath{\'e}odory.
\newblock {\"U}ber den {V}ariabilit{\"a}tsbereich der {F}ourier'schen
  {K}onstanten von positiven harmonischen {F}unktionen.
\newblock {\em Rendiconti Del Circolo Matematico di Palermo (1884-1940)},
  32(1):193--217, 1911.

\bibitem{carothers1998short}
N.~L. Carothers.
\newblock A short course on approximation theory, 1998.

\bibitem{chambolle2011first}
A.~Chambolle and T.~Pock.
\newblock A first-order primal-dual algorithm for convex problems with
  applications to imaging.
\newblock {\em Journal of mathematical imaging and vision}, 40(1):120--145,
  2011.

\bibitem{chen2016full}
Q.~Chen and V.~Koltun.
\newblock Full flow: Optical flow estimation by global optimization over
  regular grids.
\newblock In {\em IEEE Conference on Computer Vision and Pattern Recognition
  (CVPR)}, 2016.

\bibitem{chen2020semialgebraic}
T.~Chen, J.-B. Lasserre, V.~Magron, and E.~Pauwels.
\newblock Semialgebraic optimization for {L}ipschitz constants of {ReLU}
  networks.
\newblock In {\em Conference on Neural Information Processing Systems}, 2020.

\bibitem{crandall2012sfm}
D.~J. Crandall, A.~Owens, N.~Snavely, and D.~P. Huttenlocher.
\newblock {SfM} with {MRF}s: Discrete-continuous optimization for large-scale
  structure from motion.
\newblock {\em IEEE Trans. Pattern Anal. Mach. Intell. (PAMI)},
  35(12):2841--2853, 2012.

\bibitem{domokos2018mrf}
C.~Domokos, F.~R. Schmidt, and D.~Cremers.
\newblock {MRF} optimization with separable convex prior on partially ordered
  labels.
\newblock In {\em European Conference on Computer Vision (ECCV)}, 2018.

\bibitem{fix2014duality}
A.~Fix and S.~Agarwal.
\newblock Duality and the continuous graphical model.
\newblock In {\em European Conference on Computer Vision (ECCV)}, 2014.

\bibitem{kantorovich1960}
L.~V. Kantorovich.
\newblock Mathematical methods of organizing and planning production.
\newblock {\em Management Science}, 6(4):366--422, 1960.

\bibitem{kappes2013comparative}
J.~Kappes, B.~Andres, F.~Hamprecht, C.~Schnorr, S.~Nowozin, D.~Batra, S.~Kim,
  B.~Kausler, J.~Lellmann, N.~Komodakis, et~al.
\newblock A comparative study of modern inference techniques for discrete
  energy minimization problems.
\newblock In {\em IEEE Conference on Computer Vision and Pattern Recognition
  (CVPR)}, 2013.

\bibitem{krivine1964anneaux}
J.-L. Krivine.
\newblock Anneaux pr{\'e}ordonn{\'e}s.
\newblock {\em Journal d'analyse math{\'e}matique}, 12(1):307--326, 1964.

\bibitem{lasserre2001global}
J.~B. Lasserre.
\newblock Global optimization with polynomials and the problem of moments.
\newblock {\em SIAM Journal on Optimization}, 11(3):796--817, 2001.

\bibitem{lasserre2002semidefinite}
J.~B. Lasserre.
\newblock Semidefinite programming vs. {LP} relaxations for polynomial
  programming.
\newblock {\em Mathematics of Operations Research}, 27(2):347--360, 2002.

\bibitem{latorre2020lipschitz}
F.~Latorre, P.~T.~Y. Rolland, and V.~Cevher.
\newblock Lipschitz constant estimation for neural networks via sparse
  polynomial optimization.
\newblock In {\em 8th International Conference on Learning Representations},
  2020.

\bibitem{laude16eccv}
E.~Laude, T.~M\"ollenhoff, M.~Moeller, J.~Lellmann, and D.~Cremers.
\newblock Sublabel-accurate convex relaxation of vectorial multilabel energies.
\newblock In {\em European Conference on Computer Vision (ECCV)}, 2016.

\bibitem{lemarechal2001lagrangian}
C.~Lemar{\'e}chal.
\newblock Lagrangian relaxation.
\newblock In {\em Computational Combinatorial Optimization}, pages 112--156.
  Springer, 2001.

\bibitem{moellenhoff-iccv-2017}
T.~M\"ollenhoff and D.~Cremers.
\newblock Sublabel-accurate discretization of nonconvex free-discontinuity
  problems.
\newblock In {\em International Conference on Computer Vision (ICCV)}, 2017.

\bibitem{mollenhoff2019lifting}
T.~M{\"o}llenhoff and D.~Cremers.
\newblock Lifting vectorial variational problems: a natural formulation based
  on geometric measure theory and discrete exterior calculus.
\newblock In {\em IEEE Conference on Computer Vision and Pattern Recognition
  (CVPR)}, 2019.

\bibitem{moreau1970inf}
J.~J. Moreau.
\newblock Inf-convolution, sous-additivit{\'e}, convexit{\'e} des fonctions
  num{\'e}riques.
\newblock {\em Journal de Math{\'e}matiques Pures et Appliqu{\'e}es}, 47, 1970.

\bibitem{peng2011convex}
J.~Peng, T.~Hazan, D.~McAllester, and R.~Urtasun.
\newblock Convex max-product algorithms for continuous {MRF}s with applications
  to protein folding.
\newblock In {\em International Conference on Machine Learning (ICML)}, 2011.

\bibitem{poliquin1990subgradient}
R.~Poliquin.
\newblock Subgradient monotonicity and convex functions.
\newblock {\em Nonlinear Analysis: Theory, Methods \& Applications},
  14(4):305--317, 1990.

\bibitem{powers2000polynomials}
V.~Powers and B.~Reznick.
\newblock Polynomials that are positive on an interval.
\newblock {\em Transactions of the American Mathematical Society},
  352(10):4677--4692, 2000.

\bibitem{putinar1993positive}
M.~Putinar.
\newblock Positive polynomials on compact semi-algebraic sets.
\newblock {\em Indiana University Mathematics Journal}, 42(3):969--984, 1993.

\bibitem{Ro67}
R.~T. Rockafellar.
\newblock Duality and stability in extremum problems involving convex
  functions.
\newblock {\em Pacific J. Math.}, 21(1):167--187, 1967.

\bibitem{Roc70}
R.~T. Rockafellar.
\newblock {\em Convex Analysis}.
\newblock Princeton University Press, New Jersey, 1970.

\bibitem{rockafellar1974augmented}
R.~T. Rockafellar.
\newblock Augmented {L}agrange multiplier functions and duality in nonconvex
  programming.
\newblock {\em SIAM J. Control Optim.}, 12(2):268--285, 1974.

\bibitem{RoWe98}
R.~T. Rockafellar and R.~J.-B. Wets.
\newblock {\em Variational Analysis}.
\newblock Springer, New York, 1998.

\bibitem{rudin1991functional}
W.~Rudin.
\newblock Functional analysis.
\newblock {\em Internat. Ser. Pure Appl. Math}, 1991.

\bibitem{ruozzi2015exactness}
N.~Ruozzi.
\newblock Exactness of approximate {MAP} inference in continuous {MRF}s.
\newblock In {\em Advances in Neural Information Processing Systems (NeurIPS)},
  2015.

\bibitem{salzmann2013continuous}
M.~Salzmann.
\newblock Continuous inference in graphical models with polynomial energies.
\newblock In {\em IEEE Conference on Computer Vision and Pattern Recognition
  (CVPR)}, 2013.

\bibitem{San15}
F.~Santambrogio.
\newblock {\em Optimal Transport for Applied Mathematicians}.
\newblock Birkh\"auser, New York, 2015.

\bibitem{scharstein2014high}
D.~Scharstein, H.~Hirschm{\"u}ller, Y.~Kitajima, G.~Krathwohl,
  N.~Ne{\v{s}}i{\'c}, X.~Wang, and P.~Westling.
\newblock High-resolution stereo datasets with subpixel-accurate ground truth.
\newblock In {\em German conference on pattern recognition}, pages 31--42.
  Springer, 2014.

\bibitem{schmudgen1991thek}
K.~Schm{\"u}dgen.
\newblock The $k$-moment problem for compact semi-algebraic sets.
\newblock {\em Mathematische Annalen}, 289(1):203--206, 1991.

\bibitem{simoes2021lasry}
M.~Sim{\~o}es, A.~Themelis, and P.~Patrinos.
\newblock Lasry-lions envelopes and nonconvex optimization: A homotopy
  approach.
\newblock {\em arXiv preprint arXiv:2103.08533}, 2021.

\bibitem{stengle1974nullstellensatz}
G.~Stengle.
\newblock A {N}ullstellensatz and a {P}ositivstellensatz in semialgebraic
  geometry.
\newblock {\em Mathematische Annalen}, 207(2):87--97, 1974.

\bibitem{trinh2008particle}
H.~Trinh and D.~McAllester.
\newblock Particle-based belief propagation for structure from motion and dense
  stereo vision with unknown camera constraints.
\newblock In {\em International Workshop on Robot Vision}, pages 16--28.
  Springer, 2008.

\bibitem{villani2003topics}
C.~Villani.
\newblock {\em Topics in optimal transportation}, volume~58.
\newblock American Mathematical Soc., 2003.

\bibitem{Vil08}
C.~Villani.
\newblock {\em Optimal Transport: Old and New}.
\newblock Springer, 2008.

\bibitem{vogt2020connection}
T.~Vogt, R.~Haase, D.~Bednarski, and J.~Lellmann.
\newblock On the connection between dynamical optimal transport and functional
  lifting.
\newblock {\em arXiv:2007.02587}, 2020.

\bibitem{vogt2020lifting}
T.~Vogt, E.~Strekalovskiy, D.~Cremers, and J.~Lellmann.
\newblock Lifting methods for manifold-valued variational problems.
\newblock In {\em Handbook of Variational Methods for Nonlinear Geometric
  Data}, pages 95--119. Springer, 2020.

\bibitem{wainwright2008graphical}
M.~J. Wainwright, M.~I. Jordan, et~al.
\newblock Graphical models, exponential families, and variational inference.
\newblock {\em Foundations and Trends{\textregistered} in Machine Learning},
  1(1--2):1--305, 2008.

\bibitem{waki2006sums}
H.~Waki, S.~Kim, M.~Kojima, and M.~Muramatsu.
\newblock Sums of squares and semidefinite program relaxations for polynomial
  optimization problems with structured sparsity.
\newblock {\em SIAM Journal on Optimization}, 17(1):218--242, 2006.

\bibitem{wald2014tightness}
Y.~Wald and A.~Globerson.
\newblock Tightness results for local consistency relaxations in continuous
  {MRF}s.
\newblock In {\em Conference on Uncertainty in Artificial Intelligence (UAI)},
  2014.

\bibitem{wang2014efficient}
S.~Wang, A.~Schwing, and R.~Urtasun.
\newblock Efficient inference of continuous {M}arkov random fields with
  polynomial potentials.
\newblock In {\em Advances in Neural Information Processing Systems (NeurIPS)},
  2014.

\bibitem{weinmann2014total}
A.~Weinmann, L.~Demaret, and M.~Storath.
\newblock Total variation regularization for manifold-valued data.
\newblock {\em SIAM J. Imaging Sci.}, 7(4):2226--2257, 2014.

\bibitem{weinmann2016mumford}
A.~Weinmann, L.~Demaret, and M.~Storath.
\newblock {M}umford--{S}hah and {P}otts regularization for manifold-valued
  data.
\newblock {\em Journal of Mathematical Imaging and Vision}, 55(3):428--445,
  2016.

\bibitem{weisser2018sparse}
T.~Weisser, J.~B. Lasserre, and K.-C. Toh.
\newblock Sparse-bsos: a bounded degree sos hierarchy for large scale
  polynomial optimization with sparsity.
\newblock {\em Mathematical Programming Computation}, 10(1):1--32, 2018.

\bibitem{werner2007linear}
T.~Werner.
\newblock A linear programming approach to max-sum problem: A review.
\newblock {\em IEEE Trans. Pattern Anal. Mach. Intell. (PAMI)},
  29(7):1165--1179, 2007.

\bibitem{yamaguchi2012continuous}
K.~Yamaguchi, T.~Hazan, D.~McAllester, and R.~Urtasun.
\newblock Continuous {M}arkov random fields for robust stereo estimation.
\newblock In {\em European Conference on Computer Vision (ECCV)}, 2012.

\bibitem{zach2013dual}
C.~Zach.
\newblock Dual decomposition for joint discrete-continuous optimization.
\newblock In {\em International Conference on Artificial Intelligence and
  Statistics (AISTATS)}, 2013.

\bibitem{Zach-Kohli-eccv12}
C.~Zach and P.~Kohli.
\newblock A convex discrete-continuous approach for {M}arkov random fields.
\newblock In {\em European Conference on Computer Vision (ECCV)}, 2012.

\end{thebibliography}
\end{document}